\newcommand{\BbR}{\mathbb{R}}
\newcommand{\BbN}{\mathbb{N}}
\newcommand{\la}{\lambda}
\newcommand{\be}{\beta}
\newcommand{\e}{\varepsilon}
\renewcommand{\epsilon}{\varepsilon}
\renewcommand{\hat}{\widehat}
\renewcommand{\tilde}{\widetilde}
\newcommand{\Z}{{\mathcal{Z}}}
\renewcommand{\S}{{\mathcal{S}}}
\newcommand{\OO}{{\mathcal{O}}}
\newcommand{\U}{{\mathcal{U}}}
\newcommand{\I}{{\mathcal{I}}}
\renewcommand{\emptyset}{{\varnothing}}
\newtheorem{thm}{Theorem}[section]
\newtheorem{lemma}[thm]{Lemma}
\newtheorem{prop}[thm]{Proposition}
\newtheorem{cor}[thm]{Corollary}
\newtheorem{example}[thm]{Example}
\theoremstyle{remark}
\newtheorem{rmk}[thm]{Remark}
\theoremstyle{defn}
\newtheorem*{defn}{Definition}
\begin{document}

\title[On a family of self-affine sets]{On a family of self-affine sets: topology, uniqueness, simultaneous expansions}

\dedicatory{To the memory of David Broomhead}

\author{Kevin G. Hare}
\address{Department of Pure Mathematics \\
University of Waterloo \\
Waterloo, Ontario \\
Canada N2L 3G1}
\email{kghare@uwaterloo.ca}
\thanks{Research of K. G. Hare supported, in part by NSERC of Canada.}
\thanks{Computational support provided in part by the Canadian Foundation for Innovation,
    and the Ontario Research Fund.}
\author{Nikita Sidorov}
\address{School of Mathematics \\
The University of Manchester \\
Oxford Road, Manchester\\
 M13 9PL, United Kingdom.}
 \email{sidorov@manchester.ac.uk}
\date{\today}

\keywords{Iterated function system, self-affine set, simultaneous
expansion, set of uniqueness}

\subjclass[2010]{Primary 28A80; Secondary 11A67.}

\begin{abstract}
Let $\be_1,\be_2>1$ and $T_i(x,y) = \bigl(\frac{x+i}{\beta_1}, \frac{y+i}{\beta_2}\bigr),\ i\in\{\pm1\}$. Let
$A := A_{\beta_1, \beta_2}$ be the unique
    compact set satisfying $A = T_{1}(A) \cup T_{-1}(A)$.
In this paper we give a detailed analysis of $A$, and the parameters $(\beta_1, \beta_2)$ where
    $A$ satisfies various topological properties. In particular, we show that if $\beta_1<\beta_2<1.202$,
    then $A$ has a non-empty interior, thus significantly improving the bound from \cite{DJK}. In the opposite direction,
    we prove that the connectedness locus for this family studied in \cite{Sol} is not simply connected.
    We prove that the set of points of $A$ which have a unique address has positive Hausdorff dimension for all $(\beta_1,\beta_2)$.
    Finally, we investigate simultaneous $(\be_1,\be_2)$-expansions of reals, which were the initial motivation
    for studying this family in \cite{Gunturk}.
\end{abstract}

\maketitle

\section{Introduction}
\label{sec:intro}

Let $T_i(x,y) = \bigl(\frac{x+i}{\beta_1}, \frac{y+i}{\beta_2}\bigr)$ for $i=\pm1$ and $A := A_{\beta_1, \beta_2}$ be the {\em attractor}
of the iterated function system (IFS) $\{T_{-1}, T_1\}$, i.e., the unique compact set satisfying $A = T_{1}(A) \cup T_{-1}(A)$. It is well
known that $A$ is either connected or totally disconnected \cite{Hata}.

Figures suggest that when $\be_1$ and $\be_2$ are ``sufficiently small'', $A_{\be_1,\be_2}$ is connected and if, in addition,
they ``very small indeed'', then $A_{\be_1,\be_2}$ has a non-empty interior -- see Figure~\ref{fig:attractors}.
The main purpose of this paper is to make such statements quantifiable, thus expanding results from \cite{DJK, Sol}.

\begin{figure}
\includegraphics[width=110pt,height=110pt]{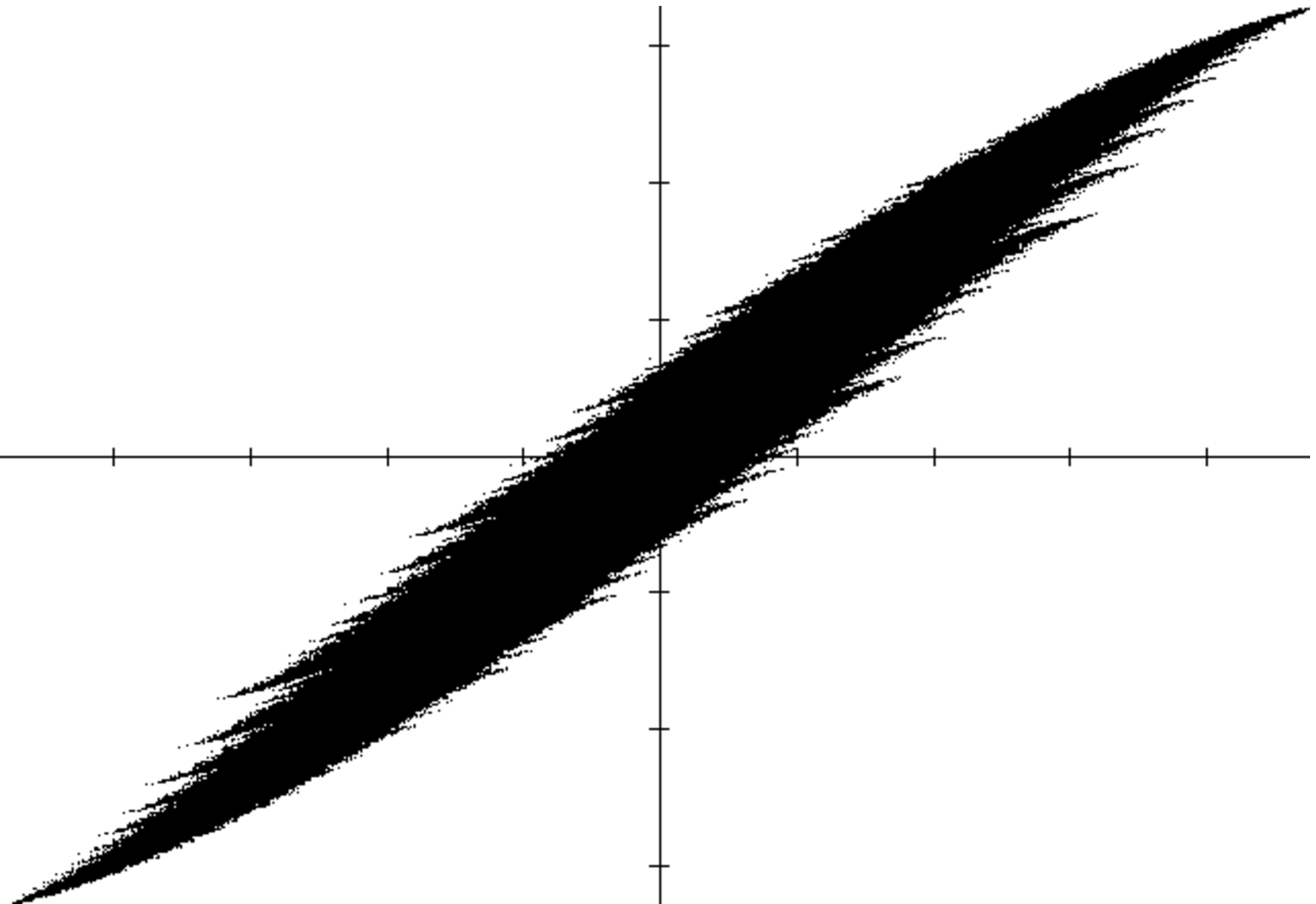}
\includegraphics[width=110pt,height=110pt]{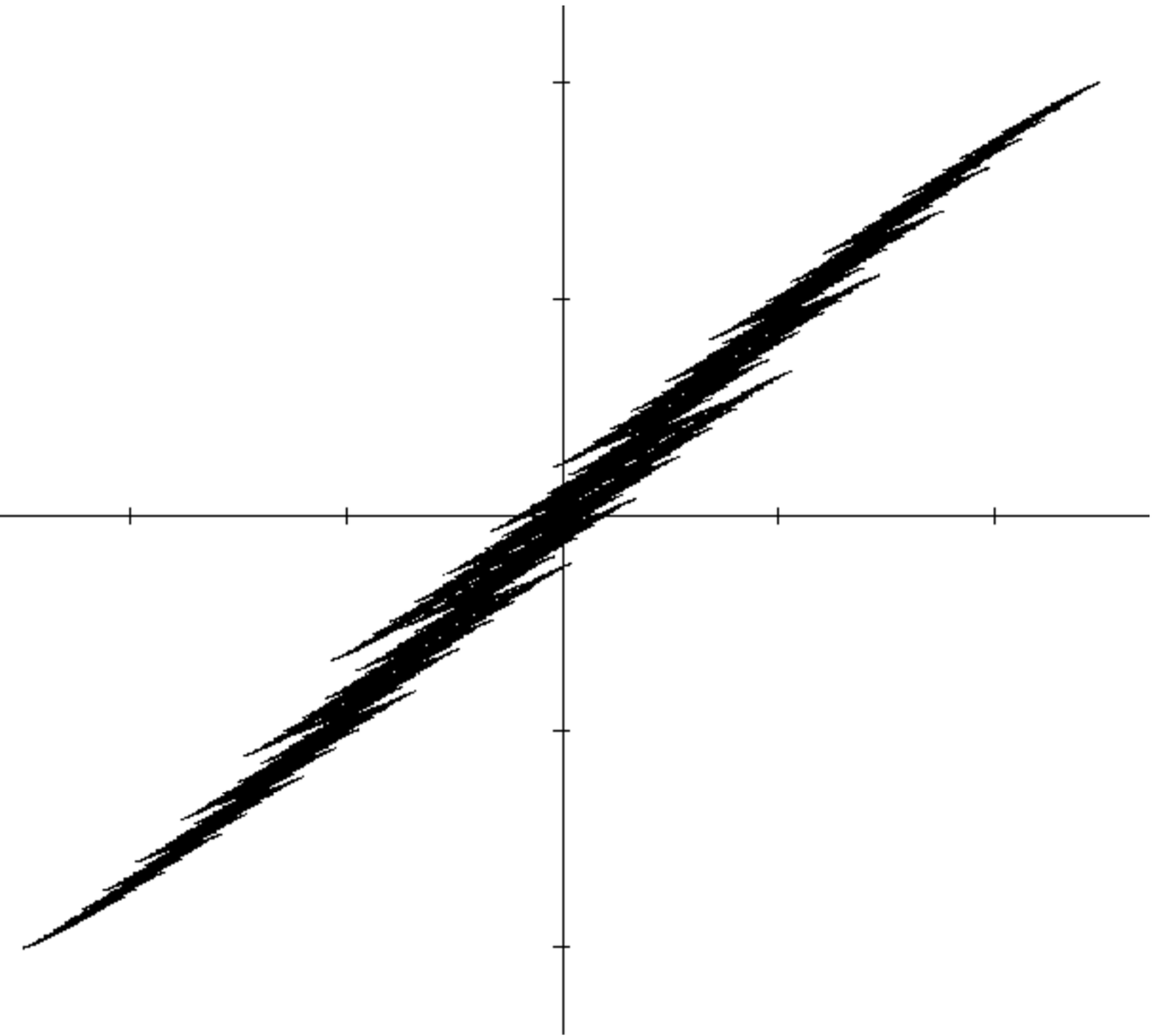}
\includegraphics[width=110pt,height=110pt]{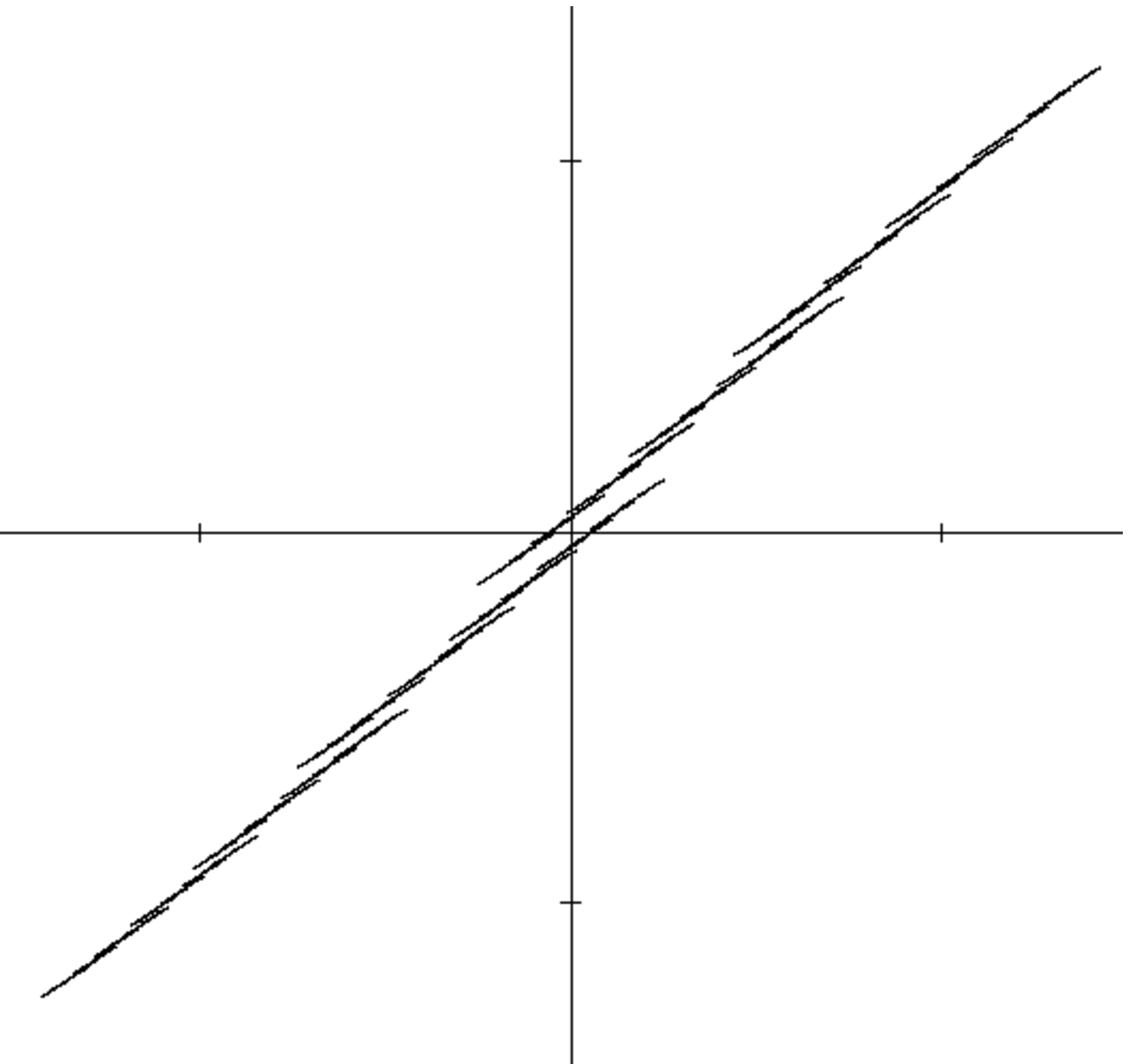}
\caption{$A_{1.2,1.3},\ A_{1.4,1.5}$ and $A_{1.7,1.8}$.}
\label{fig:attractors}
\end{figure}

Clearly, if $\beta_1 = \beta_2$ then this set is either a Cantor set if $\beta_1 = \beta_2 > 2$ or a
    one-dimensional segment otherwise. Hence, the set is trivial. So, without loss of generality we will assume that
    $\beta_1 \neq \beta_2$ throughout this paper.

For ease of notation, we will let $\la = 1/\beta_1$ and $\mu = 1/\beta_2$.
Some solutions and discussions are simplified using $\la$ and $\mu$, and some
    with $\beta_1$ and $\beta_2$.
As such, we will use these notations interchangeably.

We will denote $-1$ by $m$ (for ``minus'') and $+1$ by $p$.
A {\em word} $w \in \{p, m\}^n$ is a sequence of $p$ and $m$ of length $n$.
The set $\{p, m\}^*$ will be the set of all finite words, and $\{p, m\}^\BbN$ the set of
    all infinite words.
For $w = w_1 w_2 \dots w_n \in \{p, m\}^*$, we will denote by $T_w$ the map
    $T_{w_1} T_{w_2} \dots T_{w_n}$.
If $u, w \in \{p, m\}^*$, we will denote by $uw$ the concatenation of $u$ followed by $w$.
We will mean by $u w^\infty$ the infinite word $u w w w w \dots$.
We will use $\tilde\cdot$ for negation.
That is, $\tilde{p} = m$, $\tilde{m} = p$  and $\tilde w = \tilde{w_1} \tilde{w_2} \dots$.

We will define the map $s_\la: \{p, m\}^\BbN \to \BbR$ as
    $s_\la (w) = \sum_{i=1}^\infty w_i \la^i = \sum_{i=1}^\infty w_i / \beta_1^i$.
We will define the map $\pi: \{p, m\}^\BbN \to \BbR^2$ as
    $\pi(w) = (s_\la(w), s_\mu(w))$.
Thus, in this notation,
    \[
    A_{\beta_1, \beta_2} = \left\{\pi(w) : w \in \{p ,m\}^\BbN \right\}.
    \]

For a point $(x,y) \in A_{\beta_1, \beta_2}$ we will say it {\em has address $w \in \{p, m\}^\BbN$}
    if $\pi(w) = (x,y)$. It should be noted that a point $(x,y)$ may not have a unique address.

\subsection{The set $\Z$} \

We begin our study by considering the following set
\[
\Z = \{(\beta_1, \beta_2) : (0,0) \in A^{o}\},
\]
where $A^o$ is the interior of $A$. In a slightly different language, $\Z$ has been studied by
    Dajani, Jiang and Kempton who proved the following result:
\begin{thm}[\cite{DJK}]
If $1 < \beta_1, \beta_2 < 1.05$, then $(\beta_1, \beta_2) \in \Z$.
\end{thm}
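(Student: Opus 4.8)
The plan is to realize the origin as an interior point of the attractor by exhibiting, for every point $(x,y)$ sufficiently close to $(0,0)$, an address $w$ with $\pi(w)=(x,y)$, using a greedy-type selection combined with a covering argument. The basic identity is self-similarity: $A$ is the union of the two pieces $T_p(A)$ and $T_m(A)$, which are $A$ scaled by $(\la,\mu)$ and translated by $(\la,\mu)$ and $(-\la,-\mu)$ respectively. Since $A\supseteq [-\la/(1-\la),\la/(1-\la)]\times\{0\}$ only trivially, the real content is two-dimensional. I would first record that $A$ is symmetric about the origin ($\pi(\tilde w)=-\pi(w)$) and that $A$ contains the two ``diagonal'' fixed points $\pi(p^\infty)=\bigl(\tfrac{\la}{1-\la},\tfrac{\mu}{1-\mu}\bigr)$ and $\pi(m^\infty)$, together with all points of the form $\pi(wp^\infty)$ and $\pi(wm^\infty)$.

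The core step is a \emph{self-affine covering lemma}: if one can find a finite collection of words $w^{(1)},\dots,w^{(N)}$ such that the corresponding affine images $T_{w^{(j)}}(Q)$ of some fixed axis-parallel box $Q$ centered at the origin cover a slightly larger box $Q'\supseteq Q$, i.e. $Q'\subseteq\bigcup_j T_{w^{(j)}}(Q)$, then by iterating one gets $Q\subseteq A$ and hence $0\in A^o$. The point is that $T_{w^{(j)}}$ contracts the $x$-coordinate by $\la^{|w^{(j)}|}$ and the $y$-coordinate by $\mu^{|w^{(j)}|}$, so the images are thin rectangles whose centers are the finite sums $\bigl(s_\la(w^{(j)}),s_\mu(w^{(j)})\bigr)$ — and when $\be_1,\be_2$ are close to $1$ these centers form a fine net, while the rectangles, though anisotropic, are long enough in each direction to overlap their neighbors. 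So I would reduce the theorem to checking a finite covering condition for a specific small box, and then verify that condition by hand for the range $1<\be_1,\be_2<1.05$.

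Concretely, the key steps in order: (1) set up notation for the rectangles $R_w=T_w(Q)$ and note $R_w$ has half-widths $\la^{|w|}\rho_1$ and $\mu^{|w|}\rho_2$ if $Q=[-\rho_1,\rho_1]\times[-\rho_2,\rho_2]$; (2) prove the iteration principle: if $Q\subseteq\bigcup_{|w|=n}R_w$ for some $n$ and the $R_w$ are themselves scaled copies of a region already known to lie in $A$, then $Q\subseteq A$ — the cleanest version is to show $Q\subseteq T_p(Q)\cup T_m(Q)\cup(\text{something in }A)$, or better, find $n$ and a box $Q$ with $Q\subseteq\bigcup_{|w|=n}T_w(Q)$, which self-improves to $Q\subseteq A$ directly; (3) choose $Q$ and $n$ (likely $n$ on the order of a few, with $Q$ a small square) and reduce the covering to an inequality relating how far apart consecutive net points $\pi(w)$, $|w|=n$, can be in each coordinate versus the rectangle dimensions $\la^n\rho_1,\mu^n\rho_2$; (4) bound the spacing: along the $x$-axis the achievable values $s_\la(w)$ over $|w|=n$ together with tails fill an interval with gaps at most $2\la^n$ (using that switching the last digit changes $s_\la$ by $2\la^n$ and earlier digits by more), similarly for $y$; (5) combine to get a covering condition of the form ``$\la,\mu$ close enough to $1$'' and check it holds throughout $(1,1.05)^2$.

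The main obstacle is the \emph{anisotropy}: because $\la\neq\mu$, the rectangles $R_w$ are far from square, so a naive covering argument treating the two coordinates independently fails — a rectangle that is wide in $x$ is correspondingly the same ``word-length'' in $y$, and one cannot independently refine one coordinate without refining the other. The trick to get around this is to allow words of \emph{different lengths} in the covering family (so the net in the $x$-direction and the net in the $y$-direction are refined at rates governed by $\la$ and $\mu$ separately), or equivalently to iterate the IFS inhomogeneously; managing the bookkeeping of overlaps for such a mixed-length family, and extracting a clean sufficient condition on $(\be_1,\be_2)$ that still covers the whole square $(1,1.05)^2$ rather than just a neighborhood of $\be_1=\be_2$, is where the real work lies. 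A secondary but routine obstacle is that the verification in step (5) will be a finite but slightly delicate computation, which one would either do by hand with generous slack or delegate to a short interval-arithmetic check.
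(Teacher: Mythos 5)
The paper (following DJK, whose Proposition~2.1 is generalized as Theorem~\ref{thm:tool}) does not use a geometric covering argument at all: it chooses a monic polynomial $P(x)=x^n+b_{n-1}x^{n-1}+\dots+b_0$ vanishing at both $\beta_1$ and $\beta_2$ with $\sum|b_j|\le 2$, and then runs a \emph{one-dimensional} greedy recursion $u_i=a_i-\sum_k b_k u_{i-n+k}$ that keeps $|u_i|\le1$ and simultaneously (by a telescoping identity using $P(\beta_i)=0$) produces a digit string $(a_i)\in\{\pm1\}^{\BbN}$ representing a prescribed pair $(x_1,x_2)$ near the origin. Your covering-lemma route is a genuinely different strategy, and the iteration principle in your step (2) is correct: if $Q\subseteq\bigcup_{w\in W}T_w(Q)$ with all $|w|\ge1$, then following pre-images shows $Q\subseteq A$.

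The gap is in step (4), and you have in fact put your finger on it yourself without resolving it. You bound the gaps of $\{s_\la(w):|w|=n\}$ and $\{s_\mu(w):|w|=n\}$ separately, but a single word $w$ fixes \emph{both} coordinates of the centre $(s_\la(w),s_\mu(w))$ jointly; when $\la$ and $\mu$ are close, these centres all lie in a narrow band around the diagonal $y=x$, not on a product grid, so being dense in each marginal says nothing about covering a genuine 2D box $Q$. Allowing words of different lengths does not repair this: a word of any length still determines both coordinates at once, so one cannot refine the $x$-net without simultaneously constraining the $y$-coordinate by the \emph{same} word. Worse, as $\beta_1\to\beta_2$ the attractor collapses onto the diagonal, so the admissible $Q$ must thin out and the covering level $n$ must grow without bound; extracting from this a uniform statement over all $(\beta_1,\beta_2)\in(1,1.05)^2$ with $\beta_1\neq\beta_2$ is exactly what is not done. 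The polynomial trick circumvents both problems at once: the linear dependency $\beta_i^n=-\sum_{k<n}b_k\beta_i^k$ (valid for $i=1,2$ simultaneously) converts the coupled 2D reachability question into a single bounded-buffer digit recursion, and the degeneration as $\beta_1\to\beta_2$ is absorbed into the shrinking size of the admissible neighbourhood through the inverse of a Vandermonde matrix, not into an unbounded covering depth. If you want to pursue the covering idea, the missing lemma you would need is a quantitative 2D net statement for $\{(s_\la(w),s_\mu(w)):|w|\le n\}$ that respects the coupling --- and proving that lemma is essentially as hard as the theorem itself.
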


In this paper we improve this result to show that
\begin{thm}
\label{thm:Z}
If $\beta_1 \neq \beta_2$ are such that
\[ \left|\frac{\beta_2^8-\beta_1^8}{\beta_2^7-\beta_1^7} \right| +
          \left|\frac{\beta_2^7 \beta_1^7 (\beta_2-\beta_1)}{\beta_2^7-\beta_1^7} \right| \leq 2,
\]
then $(\beta_1, \beta_2) \in \Z$.
\end{thm}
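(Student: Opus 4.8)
The plan is to produce an explicit compact set $K$ containing a neighbourhood of the origin with $K\subseteq T_p(K)\cup T_m(K)$, and to arrange the construction so that verifying this inclusion reduces to the displayed inequality.

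\textbf{Reductions and the covering criterion.} The coordinate flip $(x,y)\mapsto(y,x)$ conjugates the IFS $\{T_{-1},T_1\}$ for $(\be_1,\be_2)$ to the one for $(\be_2,\be_1)$, preserves $\Z$, and leaves the hypothesis unchanged; so I may assume $\be_1<\be_2$, i.e.\ $\la>\mu$, in which case every $\la^k-\mu^k$ is positive and the absolute values may be dropped. The soft ingredient is: \emph{if $K\subseteq\BbR^2$ is compact with $(0,0)\in K^o$ and $K\subseteq T_p(K)\cup T_m(K)$, then $(\be_1,\be_2)\in\Z$.} Indeed, iterating the inclusion gives $K\subseteq\bigcup_{w\in\{p,m\}^n}T_w(K)$ for all $n$; since $\operatorname{diam}T_w(K)\le\max(\la,\mu)^{n}\operatorname{diam}K$ and, for a fixed $z_0\in K$, $\operatorname{dist}(T_w(z_0),A)\to0$ uniformly over $|w|=n$ (the attractor is the Hausdorff limit of the $n$-th iterates of the IFS applied to any ball), every point of $K$ lies within $o(1)$ of $A$, whence $K\subseteq A$ and $(0,0)\in A^o$. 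It is worth noting that this inclusion just says that $K$ is forward-invariant for the ``simultaneous digit'' map $(x,y)\mapsto(\be_1x-d,\be_2y-d)$, $d\in\{\pm1\}$: staying in $K$ forever produces a common $\{\pm1\}$-expansion of $(x,y)$ in the two bases, which is the right picture for choosing $K$.

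\textbf{The construction.} I would take $K$ to be a parallelogram centred at $0$, with one pair of sides parallel to $(\la,\mu)$ — the direction in which $T_p$ and $T_m$ differ — leaving its aspect ratio as a free parameter to optimise at the end. Checking $K\subseteq T_p(K)\cup T_m(K)$ directly fails for every parallelogram once $\la\neq\mu$, because $\operatorname{diag}(\la,\mu)$ shears each image away from the previous side direction; so instead I would iterate to a fixed finite depth $n$ (which, from the form of the inequality, is $7$) and verify the stronger-looking but achievable inclusion $K\subseteq\bigcup_{|w|=n}T_w(K)$. Here $T_w(K)=\operatorname{diag}(\la^n,\mu^n)K+\bigl(s_\la(w),s_\mu(w)\bigr)$, so the $2^n$ pieces are congruent small parallelograms whose centres $\{\,(s_\la(w),s_\mu(w)):|w|=n\,\}=\bigl\{\sum_{i=1}^n\e_i(\la^i,\mu^i):\e_i=\pm1\bigr\}$ are the vertices of the zonotope generated by the segments $\pm(\la^i,\mu^i)$, and two centres from words differing in one coordinate differ by $\pm2(\la^i,\mu^i)$. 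The inclusion then becomes the purely planar statement that these small parallelograms leave no gap in $K$, which — projecting onto the two edge directions of $K$ — reduces to comparing the edge lengths of one piece against the worst-case separation of consecutive centres. Carrying this out at depth $7$ and then optimising the shape of $K$, the single surviving constraint is
\[
(\la^8-\mu^8)+(\la-\mu)\ \le\ 2\,\la\mu\,(\la^7-\mu^7),
\]
which, on substituting $\la=1/\be_1$, $\mu=1/\be_2$ and clearing denominators, is exactly the hypothesis of the theorem.

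\textbf{Where the work is.} The soft parts — the two reductions and the limiting argument in the criterion — are routine. The real content is the depth-$7$ covering combinatorics: organising the $2^7$ sheared copies of $K$, identifying which pairs of them could leave a hole near the centre, and bounding the worst such hole; it is this bound, in which the anisotropy $\la\neq\mu$ accumulates over the intermediate scales $i=1,\dots,7$, that produces the exponents $7$ and $8$. I also expect one must record that the hypothesis forces $\be_1,\be_2<2$ (so the one-dimensional projections of the pieces genuinely cover their slabs) and check that the region it cuts out is non-empty — for instance that it contains a neighbourhood of the diagonal, which on solving $8\be+\be^8=14$ recovers the bound $\be_1<\be_2<1.202$ stated in the introduction.
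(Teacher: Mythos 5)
Your plan is genuinely different from the paper's, but it stops short of a proof precisely where the proof has to happen.

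The paper proves a general "tool" result (Theorem~\ref{thm:tool}): if there is a monic polynomial $P(x)=x^n+b_{n-1}x^{n-1}+\dots+b_0$ with $P(\beta_1)=P(\beta_2)=0$, $b_1=\dots=b_{m-1}=0$, $b_0\neq 0$, and $\sum_j|b_j|\le 2$, then $(\beta_1,\beta_2)\in\Z$. The proof is an $n$-dimensional greedy digit algorithm: from given $(x_1,x_2)$ near the origin one solves a small linear system for initial $u_{-n},\dots,u_{-n+m-1}$, pads with zeros, and then defines $u_i=a_i-\sum_k b_k u_{i-n+k}$, choosing $a_i\in\{\pm1\}$ so that $|u_i|\le 1$; the bound $\sum|b_j|\le 2$ is exactly what makes this choice possible. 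A telescoping computation then shows $\sum_j a_j\beta_i^{-j}=x_i$. Theorem~\ref{thm:Z} is the specialization to $P(x)=x^8-\frac{\beta_2^8-\beta_1^8}{\beta_2^7-\beta_1^7}x^7+\frac{\beta_2^7\beta_1^7(\beta_2-\beta_1)}{\beta_2^7-\beta_1^7}$, for which $\sum|b_j|\le2$ is literally the displayed hypothesis. Note that the relevant "invariant box" here lives in $[-1,1]^8$, the space of windows $(u_{i-8},\dots,u_{i-1})$, \emph{not} in the $(x,y)$-plane.

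You propose instead to find a forward-invariant parallelogram $K\subset\BbR^2$ with $0\in K^o$ and $K\subseteq\bigcup_{|w|=7}T_w(K)$. Your soft ingredient (such a $K$ is contained in $A$, hence $0\in A^o$) is fine, and your algebra checking that $(\la^8-\mu^8)+(\la-\mu)\le2\la\mu(\la^7-\mu^7)$ rewrites to the theorem's hypothesis is correct. But the heart of the argument is missing. You write that "the real content is the depth-7 covering combinatorics" and then do not carry it out: the depth $7$ is chosen "from the form of the inequality" (circular), the second edge direction of $K$ is never specified, and the claim that after "optimising the shape of $K$, the single surviving constraint is" the displayed inequality is asserted with no computation. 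There is also a structural worry: iterating to depth $7$ does not undo the shear you flagged as the obstruction at depth $1$ — $T_w(K)=\operatorname{diag}(\la^7,\mu^7)K+(\dots)$ is \emph{more} sheared relative to $K$, not less — so it is not clear your reduction to "edge length versus worst-case centre separation" is even the right planar statement, let alone that it produces these particular exponents. To make this a proof you would have to fix the second side direction (the paper's algebra suggests $(\la^2,\mu^2)$, via the linear system for $u_{-8},u_{-7}$), set up the zonotope covering explicitly, and verify that the hole-free condition is exactly the displayed inequality; as written, you have a plausible geometric reinterpretation plus a correct translation of the target inequality, but not a proof.
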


As a consequence, we have
\begin{cor}
\label{cor:Z<1.202}
If $1 < \beta_1, \beta_2 < 1.202$ then $(\beta_1, \beta_2) \in \Z$.
\end{cor}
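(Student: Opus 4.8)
The plan is to derive the corollary straight from Theorem~\ref{thm:Z} by bounding, uniformly on the square $(1,1.202)^2$, the left-hand side of the inequality appearing there. Write
\[
\Phi(\beta_1,\beta_2):=\left|\frac{\beta_2^{8}-\beta_1^{8}}{\beta_2^{7}-\beta_1^{7}}\right|+\left|\frac{\beta_2^{7}\beta_1^{7}(\beta_2-\beta_1)}{\beta_2^{7}-\beta_1^{7}}\right|.
\]
This is symmetric in $\beta_1$ and $\beta_2$, so, since $\beta_1\neq\beta_2$ throughout the paper, I may assume $1<\beta_1<\beta_2<1.202$; then each of $\beta_2^{8}-\beta_1^{8}$, $\beta_2^{7}-\beta_1^{7}$, $\beta_2-\beta_1$ is positive and the two absolute values can simply be removed.

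Next I would reparametrise by $b:=\beta_2\in(1,1.202)$ and $t:=\beta_1/\beta_2\in(0,1)$. Dividing the top and bottom of each fraction by the appropriate power of $b$ turns $\Phi$ into
\[
\Phi(\beta_1,\beta_2)=b\cdot\frac{1-t^{8}}{1-t^{7}}+b^{8}\cdot\frac{t^{7}(1-t)}{1-t^{7}}.
\]
The heart of the argument is that both rational functions of $t$ here are monotone on $(0,1)$. Indeed
\[
\frac{t^{7}(1-t)}{1-t^{7}}=\frac{t^{7}}{1+t+\cdots+t^{6}}=\frac{1}{t^{-1}+t^{-2}+\cdots+t^{-7}}
\]
is strictly increasing on $(0,1)$ with supremum $\tfrac{1}{7}$ as $t\to1$, whence $\dfrac{1-t^{8}}{1-t^{7}}=1+\dfrac{t^{7}(1-t)}{1-t^{7}}$ is also strictly increasing, with supremum $\tfrac{8}{7}$. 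Combining this with $b<1.202$ and $b^{8}<1.202^{8}$ gives
\[
\Phi(\beta_1,\beta_2)<1.202\cdot\tfrac{8}{7}+1.202^{8}\cdot\tfrac{1}{7}=\tfrac{1}{7}\bigl(8\cdot 1.202+1.202^{8}\bigr).
\]

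Finally I would verify the numerical inequality $8\cdot 1.202+1.202^{8}<14$ — the left side is about $13.97$ — so that $\Phi(\beta_1,\beta_2)<2$, and Theorem~\ref{thm:Z} yields $(\beta_1,\beta_2)\in\Z$. The only genuine choice in the proof is to take the ratio $t=\beta_1/\beta_2$ as the second variable, which is what makes the dependence on $(\beta_1,\beta_2)$ factor into manifestly monotone pieces; everything after that is routine, and I do not expect a serious obstacle. It is worth noting, though, that the margin in the closing estimate is thin ($\approx 13.97$ against $14$), so $1.202$ is essentially as large a constant as this particular deduction from Theorem~\ref{thm:Z} permits.
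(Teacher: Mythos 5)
Your proposal is correct, and it follows the same high-level route as the paper: derive the corollary from Theorem~\ref{thm:Z} by bounding the quantity in its hypothesis on the square $(1,1.202)^2$. Where you add value is that the paper merely \emph{asserts} that the inequality holds for $\beta_1,\beta_2<1.202$ (and suggests checking it ``explicitly''), whereas you supply a clean analytic verification. The reparametrisation $b=\beta_2$, $t=\beta_1/\beta_2\in(0,1)$ that turns $\Phi$ into $b\cdot\frac{1-t^8}{1-t^7}+b^8\cdot\frac{t^7(1-t)}{1-t^7}$ is the right idea, and your monotonicity argument (rewriting $\frac{t^7(1-t)}{1-t^7}=\bigl(\sum_{k=1}^7 t^{-k}\bigr)^{-1}$, hence increasing with supremum $1/7$, and hence $\frac{1-t^8}{1-t^7}=1+\frac{t^7(1-t)}{1-t^7}$ increasing with supremum $8/7$) makes the uniform bound $\Phi<\frac{1}{7}(8b+b^8)$ transparent. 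The numerics check out: $8\cdot 1.202+1.202^8\approx 13.973<14$, so $\Phi<2$, and your closing remark that $1.202$ is essentially tight for this deduction is accurate (the threshold of $8b+b^8=14$ sits between $1.202$ and $1.203$).
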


We can also, in some cases, computationally check if $(\beta_1, \beta_2) \in \Z$ and
if $(\beta_1, \beta_2) \notin \Z$.
Many cases unfortunately remain unknown.
These are shown in Figure~\ref{fig:Z20}.
Those points provably in $\Z$ coming from Theorem~\ref{thm:Z} are shown in grey.
Those points provably not in $\Z$, as discussed in Lemma~\ref{lem:not Z}, are shown in black. Note that
all points above the curve $\be_1\be_2=2$ (shown in red) are not in $\Z$ either.
These results will be discussed in Section~\ref{sec:Z}.

\begin{figure}
\includegraphics[width=300pt,height=300pt]{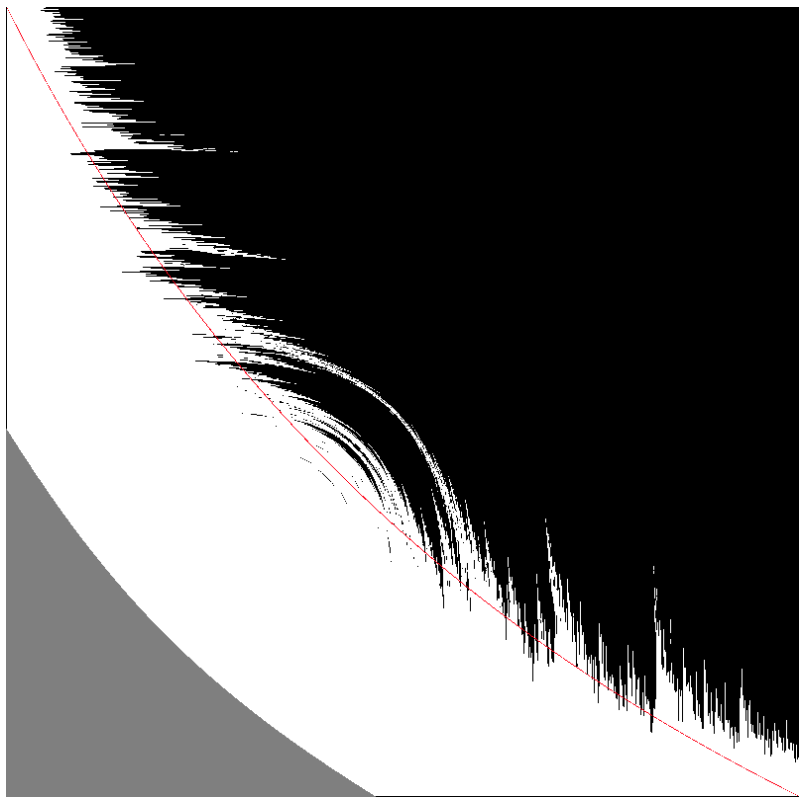}
\caption{Points known to be in $\Z$ (grey);
         points known to be not in $\Z$ (black),
         curve $\beta_1 \beta_2 = 2$ (red).
         (Level 20 approximation.)}
\label{fig:Z20}
\end{figure}

The question ``Is $(0,0) \in A^o$?'' can be easily extended to higher dimensions. Namely,
let
\[
T_i(x_1,\dots,x_m)=\left(\frac{x_1+i}{\be_1},\dots, \frac{x_m+i}{\be_m}\right),\quad i\in\{\pm1\}.
\]
Let $A_{\be_1,\dots,\be_m}$ denote the attractor of this IFS, and put
\[
\Z_m = \{(\beta_1, \dots, \beta_m) : (0,0,\dots,0) \in A_{\beta_1, \dots, \beta_m}^o\}.
\]
We show in Theorem~\ref{thm:Zm} that $\Z_m$ is always non-empty,
    first  conjectured in \cite{Gunturk}:
\begin{thm}
\label{thm:Zm}
For each $m\ge2$ there exists a $C_m > 1$ such that if
    $1 < \beta_1 < \dots < \beta_m < C_m$, then the attractor
    $A_{\beta_1, \dots, \beta_m}$ contains a neighbourhood of $(0, \dots, 0)$.
\end{thm}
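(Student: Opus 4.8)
The plan is to reduce the $m$-dimensional problem to a combinatorial statement about approximating a prescribed point by truncated expansions, and then to extract an open neighbourhood by a "coding with room to spare" argument. First I would set $\la_j = 1/\be_j$ and note that $(0,\dots,0)\in A^o$ is equivalent to the statement that the map $\Pi:\{p,m\}^\BbN\to\BbR^m$, $\Pi(w)=(s_{\la_1}(w),\dots,s_{\la_m}(w))$, is surjective onto a neighbourhood of the origin, and moreover that every point of that neighbourhood is hit with "slack" — i.e. for every target $(t_1,\dots,t_m)$ with $|t_j|$ small, there is a finite word $u$ of some length $n$ and a real vector in the image of the tail cylinder $T_u(A)$ equal to $(t_1,\dots,t_m)$, with the cylinder $T_u(A)$ actually containing a neighbourhood of that point. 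The standard device (going back to the one-dimensional theory of $\be$-expansions, and used in \cite{DJK}) is: if one can show that the first-level pieces overlap, i.e. $T_p(A)\cap T_m(A)$ contains an open set, or more robustly that finitely many cylinders $T_w(A)$, $|w|=n$, cover a neighbourhood of the origin while each is a scaled copy of $A$, then iterating gives the result. So the real content is a quantitative covering/overlap statement at some finite level $n=n(m)$.

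The key steps, in order, would be: (1) Observe that $A_{\be_1,\dots,\be_m}$ is contained in the box $\prod_j [-\frac{1}{\be_j-1},\frac{1}{\be_j-1}]$ and contains $\Pi(w)$ for all $w$; in particular, as all $\be_j\to 1^+$, the diameters of the cylinders $T_w(A)$ with $|w|=n$ shrink like $\max_j \la_j^n$ in each coordinate, but the "spread" of the $2^n$ centres $\Pi(w\,0^\infty$-type points$)$ stays comparable to the size of $A$ itself. (2) Quantify this: show that for $\be_j$ in a small enough interval $(1,C_m)$, the set of points $\{\Pi(w u): |w|=n\}$, for a fixed long tail $u$, forms a net in the box that is fine enough — spacing $O((C_m-1))$ in the worst coordinate direction — that the union of the cylinders $T_w(A)$ covers the cube $\prod_j[-\eta,\eta]^m$ for some $\eta>0$. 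The mechanism is that in coordinate $j$ the increments between consecutive admissible partial sums can be made $\ll$ the coordinate-diameter of a single cylinder, because as $\be_j\to 1$ the "carry structure" of the digit $\pm 1$ expansions becomes arbitrarily flexible; this is exactly the $m$-fold generalization of the phenomenon exploited in Theorem~\ref{thm:Z} and Corollary~\ref{cor:Z<1.202}. (3) Once a neighbourhood $U$ of the origin is covered by level-$n$ cylinders each of which is an affine copy of $A$ scaled by $\la_j$ in coordinate $j$, a compactness/self-similarity bootstrap gives $U\subseteq A$: any point of $U$ lies in some $T_w(A)$, hence has an address, so $U\subseteq A$, and since $U$ is open, $(0,\dots,0)\in A^o$.

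The main obstacle I expect is step (2): making the covering claim genuinely uniform in $m$ and producing an honest constant $C_m>1$. For $m=2$ one has the explicit machinery of Theorem~\ref{thm:Z}, but for general $m$ one must control $m$ coordinates simultaneously with a single choice of word, and the naive estimate (pick digits greedily to fix coordinate $1$, then correct coordinate $2$, etc.) degrades badly as $m$ grows because later corrections disturb earlier coordinates. I would handle this by working with blocks: use a block of length $\ell$ of free digits to move each coordinate independently by a controllable amount — the point being that the vector of "block increments" $\bigl(\sum_{i} \eps_i \la_1^i,\dots,\sum_i\eps_i\la_m^i\bigr)$, $\eps_i\in\{\pm1\}$, as the $\be_j$ range near $1$, spans a full-dimensional parallelepiped whose shape is close to that determined by a Vandermonde-type matrix in $\la_1,\dots,\la_m$, which is invertible since the $\be_j$ are distinct. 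This invertibility, together with a perturbation estimate, yields that one block suffices to hit any small target up to a second-order error, and a geometric series of blocks converges; choosing $C_m$ so that the relevant Vandermonde minimum singular value dominates the perturbation terms completes the argument. An alternative, possibly cleaner, route is induction on $m$: assume $\Z_{m-1}\neq\emptyset$ with constant $C_{m-1}$, and realise $A_{\be_1,\dots,\be_m}$ as a "skew product" over $A_{\be_1,\dots,\be_{m-1}}$ whose fibres are copies of a one-dimensional $\pm1$ attractor in the $\be_m$-direction; for $\be_m$ close enough to $1$ the fibre attractor is a full interval and the projection to the first $m-1$ coordinates already has interior, and a Fubini-type argument promotes this to interior in $\BbR^m$. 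Either way, the crux is the uniform full-dimensionality of the finite-level digit sums as $\be\to\mathbf 1$, and everything else is the routine self-affine bootstrap.
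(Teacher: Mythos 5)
Your outline identifies the right key phenomenon (full-dimensionality of the achievable digit-sum vectors as $\be_1,\dots,\be_m\to1$, controlled by a Vandermonde-type invertibility), but the route you sketch is genuinely different from the paper's. The paper does not cover a neighbourhood of the origin by level-$n$ cylinders and does not work block-by-block. Instead it proves a purely algebraic criterion (Theorem~\ref{thm:tool}): if there is a single monic polynomial $P(x)=x^n+b_{n-1}x^{n-1}+\dots+b_0$ with $P(\be_j)=0$ for all $j$, with $b_1=\dots=b_{m-1}=0$, $b_0\neq0$, and $\sum|b_k|\le 2$, then \emph{every} target $(x_1,\dots,x_m)$ close to the origin can be hit exactly. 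The mechanism is a greedy recurrence $u_i := a_i - \sum_k b_k u_{i-n+k}$ with digits $a_i\in\{\pm1\}$ chosen to keep the ``carry'' variables $u_i$ in $[-1,1]$ (possible precisely because $\sum|b_k|\le 2$), followed by a telescoping identity using $P(\be_i)=0$ to show $\sum_j a_j\be_i^{-j}=x_i$ exactly. The Vandermonde determinant then appears, but in a different place: not in estimating the spread of cylinders, but in proving that such a polynomial $P$ \emph{exists} — the paper takes $P(x)=x^{mn+1}-x^{mn}+b_{m-1}x^{(m-1)n}+\dots+b_0$, imposes $(x-1)^m\mid P$, divides the $k$th-derivative conditions by $(nm)^{(k)}$, and lets $n\to\infty$; the limiting linear system is Vandermonde in $(m-1)/m,\dots,1/m$, hence nonsingular with solution $b_i\to0$, so for large $n$ the coefficient sum is close to $1<2$. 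A separate perturbation lemma then transfers this from $(1,\dots,1)$ to a neighbourhood. So the paper buys an exact, algorithmic construction for every nearby target, rather than a covering argument; your block approach, if carried out, would need to control the accumulated error from one block to the next, which the polynomial recurrence sidesteps entirely by making the identity exact.

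Two specific cautions about the alternatives you float. Your step (2) is, as you say yourself, the whole content of the theorem, and the block-spanning version of it is stated as a plan rather than carried out; the delicate point — that a block's increment in coordinate $j$ also moves coordinates $i\ne j$ and that the resulting ``second-order error'' really is summable — is exactly what the paper's polynomial $P$ is engineered to eliminate, so I'd want to see that estimate before counting this as a proof. More seriously, the induction/skew-product route is likely not salvageable as stated: once the first $m-1$ coordinates of $\Pi(w)$ are fixed, the set of words $w$ achieving them is far from arbitrary, and the corresponding set of values of $s_{\la_m}(w)$ is not a self-similar fibre and has no reason to contain an interval just because $\be_m$ is close to $1$. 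There is no Fubini-type structure here because the \emph{same} digit string drives all $m$ coordinates; the coupling is precisely what makes the problem hard, and it does not disappear coordinate by coordinate.
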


\subsection{The set of uniqueness}

In the previous study, we bounded those $\beta_1, \beta_2$ such that there is a
    neighbourhood of $(0, 0)$ contained in $A$.
We observe that if $(0, 0) \in A$ by $\pi(w) = (0,0)$, then
     $\pi(\tilde w) = (0,0)$, where, as above, $\tilde{w}$ is the negation of $w$.
In particular, $(0,0)$ does not have a unique address under $\pi$.

For the next question, we examine the other end of this spectrum, namely,
for fixed $\beta_1$ and $\beta_2$, which points $(x,y) \in A$ have a unique address $(x,y) = \pi(w)$. More precisely,
we say that $(x,y)=\pi(w)$ has a {\em unique address} if for any
$w' \in \{p, m\}^\BbN$ with $w \neq w'$ we have $\pi(w')\neq (x,y)$.
We denote by $U_{\be_1,\be_2}$ the set of all unique addresses and by $\mathcal U_{\be_1,\be_2}$
the projection $\pi(U_{\be_1,\be_2})$ and call it the {\em set of uniqueness}.

For example, if $A_{\beta_1, \beta_2}$ is totally disconnected, then $U_{\beta_1, \beta_2} = \{p, m\}^\BbN$ and
    $\U_{\beta_1, \beta_2} = A_{\beta_1, \beta_2}$. On the other hand,
    if $(\beta_1, \beta_2) \in \Z$, then $U_{\beta_1, \beta_2} \subsetneq \{p, m\}^\BbN$ and
    $\U_{\beta_1, \beta_2} \subsetneq A_{\beta_1, \beta_2}$.

In the self-similar setting (without rotations) the set of uniqueness has been studied in detail --
see, e.g., \cite{GS, KdV} for the one-dimensional case and \cite{S07} for higher dimensions.
In particular, it is proved in \cite[Theorem~2.7]{S07} that if the contraction ratios
are sufficiently close to 1, then the set of uniqueness can contain only fixed points.
As we will see, this is very different in the self-affine setting.

We show in Lemma~\ref{lem:uniq-aux} that for $\be_1 \neq \be_2$, the set of
    uniqueness is non-empty. Furthermore, the set $U_{\be, \be_2}$ has positive topological entropy
    (Theorem~\ref{thm:uniq}), $\mathcal U_{\be, \be_2}$ has positive Hausdorff dimension
    (Corollary~\ref{cor:dimension}), and has no interior points (Proposition~\ref{prop:entropy-drop})
    for all $\beta_1,\beta_2$.
We also give sufficient conditions (albeit not provably necessary) for a point in $\U_{\be_1, \be_2}$ to
    be on the boundary of $A_{\be_1,\be_2}$ (Proposition~\ref{prop:uniq-boundary}).

\subsection{Simultaneous expansions}
Put
\[
\mathcal D_{\be_1,\be_2}=\left\{x\in\mathbb R: \exists (a_n)\in\{\pm1\}^\mathbb N \mid
x=\sum_{n=1}^\infty a_n\be_1^{-n}=\sum_{n=1}^\infty a_n\be_2^{-n}\right\}.
\]
In other words,
\[
\mathcal D_{\be_1,\be_2}=A_{\be_1,\be_2}\cap \{(x,y) : y=x\}
\]
(see Figure~\ref{fig:diag}).
Studying this set was the original motivation behind the IFS
under consideration - see \cite{Gunturk, DJK}.

We prove in Section~\ref{sec:simult} the following result:
\begin{thm}
\label{thm:simult}
\begin{enumerate}
\item [(i)]For any pair $(\be_1, \be_2)$ the set $\mathcal D_{\be_1,\be_2}$ is non-empty;
\item [(ii)] If $\min\{\be_1,\be_2\} < \frac{1+\sqrt5}2$, then the Hausdorff dimension of the set
$\mathcal D_{\be_1,\be_2}>0$ is positive;
\item [(iii)] If $\max\{\be_1,\be_2\}<1.202$, then there exists a $\delta > 0.664$ such that
    $[-\delta, \delta] \subset \mathcal D_{\be_1,\be_2}$.
\end{enumerate}
\end{thm}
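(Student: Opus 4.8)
The plan is to treat the three parts as successively stronger statements, each building on the structural machinery developed earlier in the paper. For part~(i), the natural approach is to exhibit an explicit point of $\mathcal D_{\be_1,\be_2}$ using the symmetry of the IFS. Since $T_p(0,0)=(\la,\mu)$ and $T_m(0,0)=(-\la,-\mu)$, the attractor is symmetric about the origin; more useful here is that for a word $w$, $\pi(w)$ lies on the diagonal $\{y=x\}$ iff $s_\la(w)=s_\mu(w)$. The slickest route is a fixed-point/continuity argument: consider the family of words $w^{(t)}$ obtained by some monotone interpolation and use the intermediate value theorem on the continuous function $w\mapsto s_\la(w)-s_\mu(w)$ over an appropriate compact connected subset of address space. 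Alternatively — and this is probably cleaner — one observes that $0\in\mathcal D_{\be_1,\be_2}$ is \emph{not} generally available (since $0=s_\la(w)$ forces $w=(pm)^\infty$-type solutions only for special $\be_1$), so instead one shows directly that the projection of $A_{\be_1,\be_2}$ onto the anti-diagonal direction is an interval containing a point that forces intersection with $\{y=x\}$; concretely, $A$ is connected along the diagonal direction because the $x$- and $y$-extents overlap. I would phrase (i) via: the continuous image of the connected set $\{p,m\}^{\BbN}$ under $w\mapsto s_\la(w)-s_\mu(w)$ is a compact interval $[-c,c]$ (by the symmetry $w\mapsto\tilde w$ flipping sign), which contains $0$.

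For part~(ii), assume WLOG $\be_1=\min\{\be_1,\be_2\}<\frac{1+\sqrt5}{2}$, i.e. $\la=1/\be_1>1/\varphi$, so $\la>\la^2+\la^3$ fails the "golden" threshold in the right direction: $1/\be_1$ exceeds the reciprocal of the golden ratio means $\be_1^2<\be_1+1$, hence certain words become ``redundant'' for the $s_\la$ coordinate. The idea is to build a positive-entropy subset of address space on which the two expansions are forced to agree. The standard trick: find two finite words $u,v$ of equal length $k$ with $T_u$ and $T_v$ having the same fixed point on the diagonal, or more flexibly, find finite blocks $B_0,B_1$ such that for \emph{every} infinite concatenation $B_{i_1}B_{i_2}\cdots$ the resulting point lies on $\{y=x\}$. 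Since staying on the diagonal is \emph{one} linear constraint, one expects to be able to satisfy it with a one-parameter family, but to get positive dimension we instead want to use the slack: when $\be_1<\varphi$ the number of words mapping to a given diagonal point grows exponentially, so one restricts to a sub-system (a subshift of finite type with positive entropy) all of whose points are diagonal. The cleanest implementation: show there is a finite set of words all realizing the same diagonal value, closed under concatenation in a way that yields a full shift on $\ge2$ symbols; then $\mathcal D_{\be_1,\be_2}$ contains the image of this subshift, and a standard pressure/mass-distribution argument (as used for Corollary~\ref{cor:dimension}) gives positive Hausdorff dimension. I would look for the blocks explicitly, e.g. of the form $p\tilde w$ vs $mw$ for suitable short $w$ depending on the golden threshold.

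Part~(iii) is the quantitative one and should follow by combining Theorem~\ref{thm:Z} (via Corollary~\ref{cor:Z<1.202}) with a transversality/interval argument. If $\max\{\be_1,\be_2\}<1.202$ then by Corollary~\ref{cor:Z<1.202} we have $(\be_1,\be_2)\in\Z$, so $A$ contains a neighbourhood of the origin; the content is to produce an \emph{explicit} such neighbourhood large enough that its intersection with the diagonal contains $[-\delta,\delta]$ with $\delta>0.664$. The plan is to trace through the proof of Theorem~\ref{thm:Z}: that proof presumably shows $(0,0)\in A^o$ by exhibiting a concrete ball $B((0,0),r)\subset A$ for an $r$ computable from the hypothesis; one then notes $B((0,0),r)\cap\{y=x\}$ is the segment of the diagonal of length $r\sqrt2$ centred at $0$, giving $\delta=r/\sqrt2$ or similar, and one checks numerically that under $\max\{\be_1,\be_2\}<1.202$ the resulting $\delta$ exceeds $0.664$. \textbf{The main obstacle} is part~(iii): the bound $0.664$ is sharp-looking, so the crude ball obtained from Theorem~\ref{thm:Z}'s sufficient condition may not be large enough, and one will instead need the \emph{tightest} estimate on how much of $A$ near the origin is filled in — likely by directly analysing which short words $T_w$ cover $[-\delta,\delta]$ on the diagonal (a one-dimensional covering/IFS argument in the variable $s$ along $\{y=x\}$) rather than going through the two-dimensional interior. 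I expect the honest proof of (iii) is a careful one-dimensional argument: show $[-\delta,\delta]\subseteq T_p([-\delta,\delta])\cup T_m([-\delta,\delta])$ is \emph{not} quite achievable directly (since the two images don't overlap enough when the $\be_i$ differ), so one iterates to depth a few levels, builds a finite union of affine images of $[-\delta,\delta]$ covering $[-\delta,\delta]$, and optimizes $\delta$ against the constraint $\max\{\be_1,\be_2\}<1.202$; the secondary obstacle is keeping the two coordinates simultaneously controlled, which is exactly where the self-affine (as opposed to self-similar) nature bites.
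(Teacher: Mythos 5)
Your proposal has a genuine gap in part (ii) and a factual error in part (i); part (iii) is in the right spirit but takes a different route from the paper.

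For (i), your cleaner version asserts that $\{p,m\}^{\BbN}$ is connected; it is not -- it is a Cantor set, so you cannot conclude the image of $w\mapsto s_\la(w)-s_\mu(w)$ is an interval from continuity alone. (The image does in fact turn out to be a symmetric interval when $1/2<\la,\mu<1$, by verifying the tail condition $\mu^n-\la^n\le\frac{\mu^{n+1}}{1-\mu}-\frac{\la^{n+1}}{1-\la}$, which reduces to the monotonicity of $t\mapsto\frac{2t-1}{1-t}$ -- but that needs to be proved, not inferred from connectedness.) The paper instead runs a bisection: it shows by induction that at each level $k$ there is a word $w$ with $\pi(wm^\infty)$ below and $\pi(wp^\infty)$ above the diagonal, the key observation being that the vector $\pi(wmp^\infty)-\pi(wpm^\infty)$ has positive components and slope $>1$, so the ``bad'' configuration ($\pi(wpm^\infty)$ above and $\pi(wmp^\infty)$ below) cannot occur.

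For (ii), the block-concatenation idea does not work. If $B_0,B_1$ are blocks of equal length $k$ with $s_\la(B_0)=s_\la(B_1)$ and $s_\mu(B_0)=s_\mu(B_1)$, then \emph{every} infinite concatenation maps to the single point $\bigl(s_\la(B_0)/(1-\la^k),\,s_\mu(B_0)/(1-\mu^k)\bigr)$ -- you get Hausdorff dimension zero, not positive. Conversely, if the blocks have different $s_\la,s_\mu$ values, the identity $\sum_n\la^{(n-1)k}s_\la(B_{i_n})=\sum_n\mu^{(n-1)k}s_\mu(B_{i_n})$ required for the concatenation to lie on the diagonal is position-dependent and fails for generic concatenations: lying on the diagonal is a single linear constraint that is \emph{not} preserved under free concatenation. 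The paper's actual mechanism is dynamical: it normalizes the bisection state by an affine map $\rho_w$ onto $[0,1]$ and shows the transition $\rho_w\mapsto\rho_{wm}$ or $\rho_{wp}$ is precisely a (level-dependent) multi-valued $\be^{(n)}$-transformation $\tau_n$, with $\be^{(n)}\uparrow\be_2$. Since $\be^{(n)}<\be_2<\frac{1+\sqrt5}{2}$, the branching interval $(1-1/\be^{(n)},1/\be^{(n)})$ is hit with positive lower frequency (citing Feng--Sidorov), giving exponential growth of the count $W_n$ of admissible words, which then yields positive Hausdorff dimension exactly as in Corollary~\ref{cor:dimension}.

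For (iii), you correctly anticipate that a crude two-dimensional ball won't give $0.664$ and that a diagonal-specific argument is needed, but you propose a self-covering iteration on $[-\delta,\delta]$. The paper's route is different and more direct: it specializes the recursion from the proof of Theorem~\ref{thm:tool} (the $u_i$ dynamics) to $x_1=x_2$, solves the $2\times2$ linear system for the initial values $u_{-8},u_{-7}$ in terms of $x_1$ and the coefficient $b_0$, and bounds $|u_{-8}|,|u_{-7}|\le1$ explicitly using the degree-$8$ polynomial $P$ of Theorem~\ref{thm:Z}; optimizing over $\be_1,\be_2\le1.202$ gives the constant $0.664$. Your self-covering idea might be made to work, but it would need careful handling of both coordinates (the self-affine scaling would force a product of intervals, not a single $[-\delta,\delta]$), and it duplicates work already embedded in Theorem~\ref{thm:tool}.
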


\subsection{The set $\OO$ and $\S$} \

When studying iterated function systems, a common property that
    is investigated is if $A$ satisfies the open set condition.

\begin{defn}\label{def:OSC}
Let $A$ be the unique compact set such that $A = F_1(A) \cup \dots \cup F_k(A)$,
    where the $F_i$ are linear contractions.
We say that $A$ {\em satisfies the open set condition (OSC)} if there exists a non-empty open set $O$  such that
\begin{itemize}
\item $F_i(O) \subset O$ for all $i$;
\item $F_i(O) \cap F_j(O) = \emptyset$ for all $i \neq j$.
\end{itemize}
\end{defn}

An even stronger property is that of a set being totally disconnected.

\begin{defn}
We say that a set $A$ is {\em totally disconnected} if for all $x, y \in A$, $x \neq y$,
    there exist open sets $O_x$ and $O_y$  such that
\begin{itemize}
\item $x \in O_x$
\item $y \in O_y$
\item $O_x \cap O_y = \emptyset$.
\item $A \subset O_x \cup O_y$.
\end{itemize}
\end{defn}

A set is disconnected if there exist $x$ and $y$ with the above property.
It is clear that if a set is totally disconnected then it is disconnected.
It is known for this case that $A := A_{\beta_1, \beta_2}$ is either connected
    or totally disconnected \cite{Hata}.
Hence in this case the converse is also true.
That is, if $A$ is disconnected, then it must be totally disconnected.


Put
\begin{align*}
\mathcal O&=\{(\be_1,\be_2) : \{T_{-1},T_1\}\ \text{satisfies the OSC}\},\\
\mathcal S&=\{(\be_1,\be_2) : A_{\be_1,\be_2}\ \text{is totally disconnected}\}.
\end{align*}

It is easy to see that $\S \subset \OO$. Furthermore, if $\be_1>2$ or $\be_2>2$, then
the projection of $A$ onto the $x$- (respectively, $y$-) axis is a Cantor set, whence
$(\be_1,\be_2)\in\mathcal S$. Henceforth we will assume $\be_1<2$ and $\be_2<2$.


In Theorem~\ref{thm:S struct}, we give a precise description of a curve $S_1$
such that if $(\beta_1, \beta_2)$ are
    above this curve, then $(\beta_1, \beta_2) \in \S$.
As a corollary to this Theorem, we get
\begin{cor}
\label{cor:3.129}
If $\beta_1 + \beta_2 \geq 3.1294734398566\dots$
    then $(\beta_1, \beta_2) \in \S$.
If the inequality is strict, then $(\beta_1, \beta_2) \in \OO$.
For all $\epsilon > 0$
    there exist $\beta_1$ and $\beta_2$ with  $\beta_1+\beta_2 \geq 3.1257839569901 - \epsilon$
    where $(\beta_1, \beta_2) \notin \OO$.
\end{cor}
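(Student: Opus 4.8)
The plan is to deduce everything from Theorem~\ref{thm:S struct}, which by assumption supplies an explicit curve $S_1$ in the $(\be_1,\be_2)$-plane with the property that any parameter pair lying (strictly) above $S_1$ gives a totally disconnected attractor, and that the OSC holds in the appropriate one-sided analogue. The first step is to identify the point on $S_1$ that minimizes the linear functional $\be_1+\be_2$. Since $S_1$ is given by an explicit (presumably piecewise-algebraic) equation, this is a constrained optimization: parametrize $S_1$, differentiate $\be_1+\be_2$ along it, and locate the critical point. I expect the minimizing point to be where the two "branches'' of the relevant combinatorial condition defining $S_1$ meet — i.e., a corner of the curve — so the optimum is attained by equating two competing algebraic expressions rather than by a smooth first-order condition. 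Solving that algebraic system numerically yields the constant $3.1294734398566\dots$; call the minimizing pair $(\be_1^*,\be_2^*)$.

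Next, for the first sentence: if $\be_1+\be_2\ge 3.1294734398566\dots$, I want to conclude $(\be_1,\be_2)\in\S$. This requires a monotonicity/convexity observation: the region $\{(\be_1,\be_2): (\be_1,\be_2)\ \text{above}\ S_1\}$ together with its closure contains the whole half-plane $\{\be_1+\be_2\ge \be_1^*+\be_2^*\}$ intersected with the relevant box $\be_1,\be_2<2$. Concretely, I would check that $S_1$ lies entirely on or below the supporting line $\be_1+\be_2=\be_1^*+\be_2^*$ through its lowest point (this is exactly the statement that $\be_1^*+\be_2^*$ is the minimum of $\be_1+\be_2$ on $S_1$), so any pair with larger coordinate sum lies above $S_1$ and Theorem~\ref{thm:S struct} applies. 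The strict-inequality case for $\OO$ is handled the same way, using the OSC half of Theorem~\ref{thm:S struct} and the fact that $\S\subset\OO$ anyway for the non-strict boundary.

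For the last sentence I need a \emph{lower} bound construction: for every $\e>0$, parameters with $\be_1+\be_2\ge 3.1257839569901-\e$ but $(\be_1,\be_2)\notin\OO$. Here the idea is to exhibit an explicit family where the OSC provably fails — typically because two cylinder images $T_w(A)$ and $T_{w'}(A)$ (equivalently, the images of any candidate open set) are forced to overlap by a resonance/algebraic identity among $\la,\mu$. I would take a specific eventually-periodic word identity $s_\la(u w^\infty)=s_\la(u' w^\infty)$ and simultaneously $s_\mu(u w^\infty)=s_\mu(u' w^\infty)$, which cuts out an algebraic curve in $(\be_1,\be_2)$-space; points on that curve violate the OSC. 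The number $3.1257839569901$ is then the minimum of $\be_1+\be_2$ over that obstruction curve, found by the same optimization procedure as above, and letting $(\be_1,\be_2)$ run along the curve toward that minimizing point gives the family of examples with sum $\ge 3.1257839569901-\e$ for every $\e>0$.

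The main obstacle is the last part: one must produce an honest \emph{proof} that the OSC fails, not merely numerical evidence of overlap. This means verifying that the algebraic identity forcing $T_w(A)\cap T_{w'}(A)$ to have nonempty interior (or, more precisely, that no admissible open set $O$ can separate the pieces) actually holds on the chosen curve — which is where the gap between $3.1257839569901$ and $3.1294734398566$ comes from, and why the two thresholds do not match. The optimization steps themselves are routine once the curves are written down; the delicate point is pinning down exactly which combinatorial overlap condition is both necessary for OSC failure and tractable enough to optimize explicitly.
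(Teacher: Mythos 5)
Your strategy for the first two sentences is essentially the paper's: optimize $\beta_1+\beta_2$ over the level-one curve $S_1$ from Theorem~\ref{thm:S struct}. But two details are off. First, the direction of the optimization: you need the \emph{maximum} of $\beta_1+\beta_2$ on $S_1$, not the minimum, since the conclusion requires the half-plane $\{\beta_1+\beta_2\ge c\}$ (within the box $\be_1,\be_2<2$) to lie entirely on the disconnected side of $S_1$, which holds exactly when $c$ dominates $\beta_1+\beta_2$ everywhere on $S_1$. Your later supporting-line sentence actually describes the maximum, but you call it the minimum twice; this is a sign error in the write-up rather than in the geometry. Second, your guess that the optimum sits at a corner of the piecewise curve (``equating two competing algebraic expressions'') is wrong: the paper parametrizes each branch of \eqref{eq:inside} via $\beta_2 = 3 - \beta_1 + t$, takes a first-order condition in $\beta_1$, and finds a \emph{smooth interior} critical point on the $k=4$ branch. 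The resulting $\beta_1\approx 1.2356\dots$ is a degree-$36$ algebraic number, not a root of $P_k$, which it would be if the extremum were at a corner $(\beta_1^{(k)},\beta_2^{(k)})$.

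For the last sentence your route diverges from the paper's and, as written, has a genuine gap. You propose to manufacture an explicit algebraic overlap curve from scratch and then optimize over it; you honestly flag that actually proving the overlap forces OSC failure is the hard part, and indeed you never identify the curve, nor can you explain why the specific constant $3.1257839569901\dots$ appears. The paper instead invokes a ready-made result of Solomyak (\cite[Theorem~2.3]{Sol}): every neighbourhood of the corner points $(\beta_1^{(k)},\beta_2^{(k)})$ of $S_1$ contains parameters outside $\S$. Those corners are the two roots in $(1,2)$ of $P_k(x)=x^{k+1}-2x^k+2$; choosing $k=5$ gives $\beta_1^{(5)}+\beta_2^{(5)}=3.1257839569901\dots$, which is precisely the stated threshold, and any $\e>0$ gives nearby non-$\OO$ parameters with sum exceeding that value minus $\e$. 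Without this cited input, or an equally concrete and verified overlap obstruction, your argument does not close, and it also has no way to recover the particular number the corollary asserts.
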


We can also, in some cases, computationally check if $(\beta_1, \beta_2) \in \S$ and
if $(\beta_1, \beta_2) \notin \S$.
Many cases remain unknown.
The first are shown in Figure~\ref{fig:S40}.
Those points provably in $\S$ are shown in black.
These results will be discussed in Section~\ref{sec:O S}.
In Section~\ref{sec:island} we show that $\S$ is disconnected.

\begin{figure}
\includegraphics[width=300pt,height=300pt]{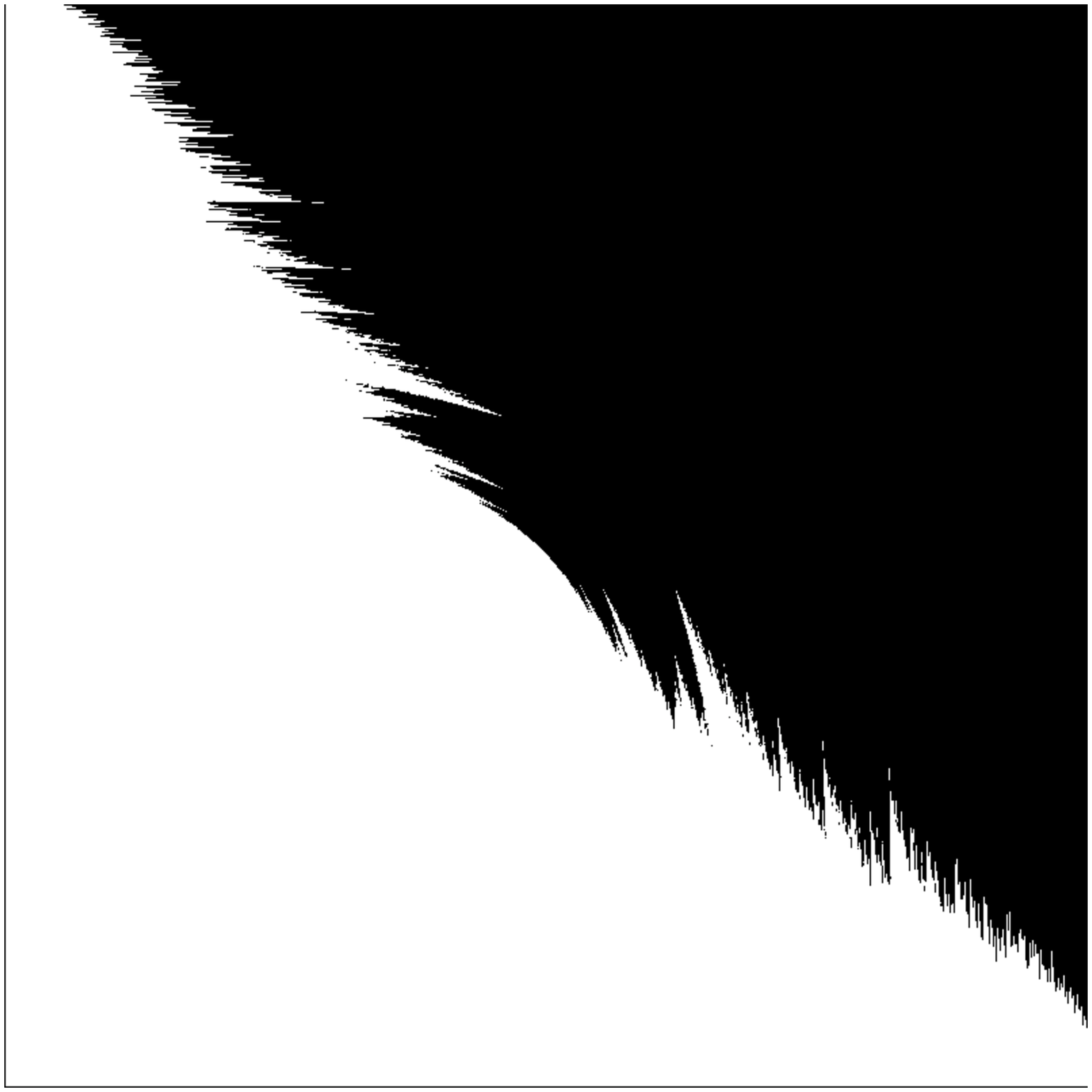}
\caption{Points known to be in $\S$ (black).
         (Level 40 approximation)}
\label{fig:S40}
\end{figure}

\subsection{Relations between sets} \

There are a number of obvious -- and some not so obvious -- relations between
    some of these sets.

Define
\[
\I = \{(\beta_1, \beta_2) : \mathrm{the\ attractor}\ A\ \text{has a non-empty interior}\}.
\]
It is clear that $\Z \subset \I$.
It is also clear that $\Z \cap \S = \emptyset$.
We know very little about $\I$, although it seems likely that $\I \cap \OO = \emptyset$.
It is not clear if $\Z \subsetneq \I$, or if in fact they are equal sets.
It is true that $\S \subsetneq \OO$, as demonstrated by the points $(\beta_1^{(n)}, \beta_2^{(n)})$
    from Theorem~\ref{thm:S struct}, which are all points in $\OO$ but not in $\S$.
All of these points $(\beta_1^{(n)}, \beta_2^{(n)})$ are points on the boundary of $\OO$, as
    shown by Solomyak \cite{Sol}.

An interesting observation to make is that there are points that are not in $\Z$ yet at the
    same time are not in $\OO$ either.

For example, let $\beta_1 \approx 1.190842710$
and $\beta_2 \approx 1.769542577$ be
roots of $x^{11}-x^{10}-x^9-x^8+x^6-x^5+x^4+x^3+x^2+x+1$.
We see by Lemma~\ref{lem:not O} that $(\beta_1, \beta_2) \notin \OO$.
As $\beta_1 \beta_2 =  2.107246878 > 2$, the Lebesgue measure of
    $A$ is $0$, hence $(\beta_1, \beta_2) \notin \Z$.

As a second example, let $\beta_1 \approx 1.122195284$
    and $\beta_2 \approx  1.776995700$ be roots of
    $x^{13}-x^{12}-x^{11}-x^9-x^8+x^7-x^6+x^5+x^4+x^3+x^2+x+1$.
Again, by Lemma~\ref{lem:not O}, $(\beta_1, \beta_2) \notin \OO$.
Since $\beta_1 \beta_2 = 1.994136194 < 2$, the Lebesgue measure argument does not work here. However, we
can, applying techniques discussed in Subsection~\ref{ssec:not Z}, show that
    $(\beta_1, \beta_2) \notin \Z$ (using a level~$25$ approximation).

This indicates that there is actually more structure here that is not fully explored.

\section{The convex hull of $A$}

Before beginning our study of properties of $A = A_{\beta_1, \beta_2}$, we will first introduce
    and study $K$, the convex hull of $A$.
The structure of $K$ will play an important role in later investigations, both
    from a computational, and a theoretical point of view.

We first give a precise description of those points that are vertices of $K$.
See for example Figure~ \ref{fig:K}.
\begin{thm}
\label{thm:K vert}
The vertices of $K$ have addresses $p^k m^\infty$ and $m^k p^\infty$ for $k = 0, 1, 2, \dots$.
\end{thm}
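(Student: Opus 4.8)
The plan is to identify the vertices of $K$ by understanding which directions the attractor $A$ extends furthest in, and showing that the extreme points in every direction are exactly the claimed points. First I would observe that since $A = T_p(A) \cup T_m(A)$, the convex hull satisfies $K = \mathrm{conv}(T_p(K) \cup T_m(K))$, and since each $T_i$ is an affine contraction (a diagonal linear map composed with a translation), $K$ is the unique compact convex set with this property. For a given direction $\theta \in \BbR^2$, consider the linear functional $\ell_\theta(x,y) = \langle\theta,(x,y)\rangle$; its maximum over $A$ is attained at a point $\pi(w)$, and because $\ell_\theta(\pi(w)) = \sum_i \ell_\theta(\pi_i) $ where $\pi_i$ depends only on whether $w_i = p$ or $m$ (the $i$-th coordinate contributes $w_i(\theta_1\la^i + \theta_2\mu^i)$), the maximising $w$ is obtained greedily: $w_i = p$ when $\theta_1\la^i + \theta_2\mu^i > 0$ and $w_i = m$ when it is $< 0$.

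The key point is then to analyze the sign pattern of the sequence $c_i := \theta_1\la^i + \theta_2\mu^i$ as $i$ runs over $\BbN$. Writing $c_i = \la^i(\theta_1 + \theta_2(\mu/\la)^i)$, the sign of $c_i$ is the sign of $\theta_1 + \theta_2 (\mu/\la)^i$, and since $(\mu/\la)^i$ is monotonic in $i$ (as $\la \neq \mu$), this expression changes sign at most once. Hence for each direction $\theta$ the greedy maximiser is of the form $p^k m^\infty$ or $m^k p^\infty$ for some $k \geq 0$ (with the pure sequences $p^\infty = m^0 p^\infty$ and $m^\infty = p^0 m^\infty$ as the extreme cases when $c_i$ has constant sign). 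This shows every exposed point of $K$ is among the listed addresses. Conversely, to see each such point is genuinely a vertex, I would exhibit for each $k$ a direction $\theta$ (or a range of directions) for which $p^k m^\infty$ is the unique maximiser — this amounts to choosing $\theta$ so that $c_1,\dots,c_k > 0$ and $c_{k+1}, c_{k+2}, \dots < 0$, which is possible precisely because the transition value of $(\mu/\la)^i$ can be placed strictly between the $k$-th and $(k+1)$-th powers; uniqueness of the maximiser forces the point to be extreme, hence a vertex of the (countably-vertexed) convex polygon-like hull $K$.

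The main obstacle I anticipate is the bookkeeping around the two cases $\la < \mu$ versus $\la > \mu$ (equivalently $\be_1 > \be_2$ or $\be_1 < \be_2$), since which of $p^k m^\infty$ or $m^k p^\infty$ corresponds to "upper" versus "lower" boundary arcs swaps, and one must make sure no vertices are double-counted or missed at the transition directions $\theta$ where $c_i = 0$ for some $i$ (there the maximiser is non-unique and the "vertex" is actually a point on an edge between two consecutive listed vertices — consistent with, not contradicting, the claim). A secondary technical point is confirming that these extreme points are isolated as vertices, i.e.\ that consecutive addresses $p^k m^\infty$ and $p^{k+1}m^\infty$ are joined by a line segment in $\partial K$ rather than accumulating pathologically; this follows from the sign analysis since the set of directions selecting a maximiser "between" these two is an open cone, giving a genuine edge. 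I would also double-check the degenerate endpoints: $m^\infty$ (i.e.\ $p^0 m^\infty$) and $p^\infty$ (i.e.\ $m^0 p^\infty$) are the images $\pi(m^\infty) = (\frac{-\la}{1-\la}, \frac{-\mu}{1-\mu})$ and its negative, the two "farthest" vertices, and the accumulation of the $p^k m^\infty$ as $k \to \infty$ is exactly $p^\infty$, so the vertex set is this countable set together with its two limit points, all accounted for in the statement.
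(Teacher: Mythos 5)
Your proposal is correct and takes a genuinely different route from the paper. The paper's proof works directly with the boundary: it fixes $\beta_2<\beta_1$ and shows, by an explicit slope comparison followed by induction on $k$, that every $\pi(w)$ lies above the chord from $\pi(p^k m^\infty)$ to $\pi(p^{k+1}m^\infty)$, hence these chords form the lower boundary of $K$ (and by symmetry the $m^kp^\infty$ give the upper boundary). You instead work with the support function: for each direction $\theta$ you observe that $\langle\theta,\pi(w)\rangle=\sum_i w_i c_i$ with $c_i=\theta_1\la^i+\theta_2\mu^i$, that the sequence $(c_i)$ has at most one sign change because $\theta_1+\theta_2(\mu/\la)^i$ is monotone in $i$ (this is exactly the same algebraic kernel hiding inside the paper's inequality~\eqref{eq:K2}), and that the greedy maximiser is therefore $p^km^\infty$ or $m^kp^\infty$. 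Then you go the other way and, for each $k$, exhibit a direction with strict sign pattern $+^k-^\infty$, making that point an exposed (hence extreme) point. The two proofs buy the same theorem, but your version localises the single analytic fact (one sign change) and packages the induction into the standard convexity dictionary between faces of $K$ and normal directions; the paper's version is more hands-on and gives the chord structure explicitly, which it later reuses when computing with the edges $P_k$ and $M_k$.

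One small point worth tightening if you were to write this out: showing that every exposed point is of the claimed form and that each claimed point is exposed does not, by itself, account for all extreme points (Straszewicz gives only that extreme points lie in the closure of the exposed ones). Here the closure adds nothing new, since $\pi(p^km^\infty)\to\pi(p^\infty)$ and $\pi(m^kp^\infty)\to\pi(m^\infty)$ and both limits are already in your list, so the gap closes immediately --- but it should be said. Similarly, your remark that the directions ``selecting a maximiser between'' two consecutive vertices form an open cone is slightly off: the open cones are the ones selecting each single vertex $p^km^\infty$, and the transition between consecutive cones is a single ray (where $c_{k+1}=0$), whose associated face of $K$ is precisely the edge joining $\pi(p^km^\infty)$ and $\pi(p^{k+1}m^\infty)$. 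That is what gives the genuine edge. Neither of these affects the correctness of the argument, just its phrasing.
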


\begin{figure}
\includegraphics[width=400pt,height=400pt]{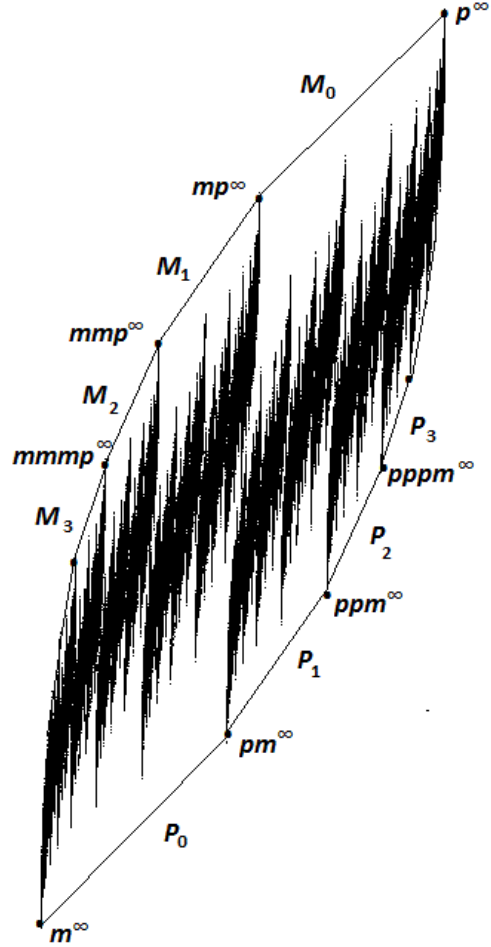}
\caption{$A_{1.85, 1.25}$ together with vertices and edges of $K$}
\label{fig:K}
\end{figure}

\begin{proof}
Without loss of generality, we may assume that $\beta_2 < \beta_1$.
It suffices to show that the line segments connecting
    $\pi(p^k m^\infty)$ and $\pi(p^{k+1} m^\infty)$ lie below $A$.
We will denote this line segment by $P_k$.
Let us begin at $k = 0$.
We must show that for any $w \in \{p, m\}^\BbN$ that the line from
$\pi(m^\infty)$ to $\pi(w)$ lies above the straight line passing
through $\pi(m^\infty)$ and $\pi(p m^\infty)$.

We notice that the line $P_0$ from $\pi(m^\infty)$ to $\pi(p m^\infty)$ is in the direction
\begin{eqnarray*}
\pi(p m^\infty) - \pi(m^\infty) & = &
\left(\frac1{\be_1} - \sum_{i \geq 2} \be_1^{-i} , \frac1{\be_2} - \sum_{i \geq 2} \be_2^{-i} \right)  \\
& & -
\left(-\frac1{\be_1} - \sum_{i \geq 2} \be_1^{-i} , -\frac1{\be_2} - \sum_{i \geq 2} \be_2^{-i} \right)  \\
                                            & = &  \left(\frac2{\be_1}, \frac2{\be_2}\right).
\end{eqnarray*}
This will have slope $s_1 = \be_1/\be_2$.

Consider now the line from $\pi(m^\infty)$ to $\pi(w)$ for $w \in \{p, m\}^\BbN$
    where $w$ not equal to $m^\infty$ and not equal to $p m^\infty$.
\begin{eqnarray*}
\pi(w) - \pi(m^\infty) & = &  \left(\sum_{i \geq 1} (a_i + 1) \be_1^{-i}, \sum_{i \geq 1} (a_i + 1) \be_2^{-i}\right).
\end{eqnarray*}
This will have slope $s_2 = (\sum_{i \geq 1} (a_i + 1) \be_2^{-i})/(\sum_{i \geq 1} (a_i + 1) \be_1^{-i})$.

It is obvious that $\pi(w)$ lies to the right of $\pi(m^\infty)$.
Hence, to show that $\pi(w)$ lies above the line $P_0$.
    it suffices to show that $s_2 > s_1$.

This will be true if and only if
\begin{equation}
\sum_{i \geq 2} (a_i + 1) \be_2^{-i+1} > \sum_{i \geq 2} (a_i + 1) \be_1^{-i+1}
\label{eq:K1}
\end{equation}
We see that the $a_i + 1$ terms are either $0$ or $2$ (and hence always non-negative).
Further $\be_2 < \be_1$ by assumption, and hence $\be_2^{-i+1} > \be_1^{-i+1}$ for all $i \geq 2$.
From this the result follows.
We know that we only get equality if $a_i + 1 = 0$ for all $a_i \geq 2$.
This cannot happen, as $w \neq m^\infty$ and $w \neq p m^\infty$.

We now proceed by induction. Consider the line $P_k$ from $\pi(p^k m^\infty)$ to $\pi(p^{k+1} m^\infty)$.
This is in the direction:
\begin{eqnarray*}
\pi(p^{k+1} m^\infty) - \pi(p^k m^\infty) & = &  (2/\be_1^{k+1}, 2/\be_2^{k+1}).
\end{eqnarray*}
This will have slope $s_1  = \be_1^{k+1}/\be_2^{k+1}$.
In particular, notice that these slopes are increasing as $k$ increases (as $\beta_1/\beta_2 >1$).

Consider a word $\pi(w)$ not equal to either $\pi(p^k m^\infty)$ or $\pi(p^{k+1} m^\infty)$.
We may assume without loss of generality that $\pi(w)$ lies to the right of $\pi(p^k m^\infty)$.
(If not, then there will exist some $k' < k$ such that $w$ lies to the right of
    $\pi(p^{k'} m^\infty)$ and to the left of $\pi(p^{k'+1} m^\infty)$.
By induction $w$ will be above this line $P_{k'}$.
As the slope are increasing, we will have that $\pi(w)$ is above the line $P_k$.)

Consider the direction from $p^k m^\infty$ to $w$.
As before, we have that
\begin{eqnarray*}
\pi(w) - \pi(p^k m^\infty) & =  &
\left(\sum_{i = 1}^k (a_i - 1) \be_1^{-i} + \sum_{i \geq k+1} (a_i + 1) \be_1^{-i},\right. \\
&& \left. \sum_{i = 1}^k (a_i - 1) \be_2^{-i} + \sum_{i \geq k+1} (a_i + 1) \be_2^{-i}\right).
\end{eqnarray*}
This will have slope
\[
s_2 = \frac{\sum_{i = 1}^k (a_i - 1) \be_2^{-i} + \sum_{i \geq k+1} (a_i + 1) \be_2^{-i}}
       {\sum_{i = 1}^k (a_i - 1) \be_1^{-i} + \sum_{i \geq k+1} (a_i + 1) \be_1^{-i}}.
\]

We have that $s_2 > s_1$ if and only if
\begin{equation}
\begin{aligned}
\sum_{i = 1}^k (a_i &- 1) \be_2^{-i+k+1} + \sum_{i \geq k+1} (a_i + 1) \be_2^{-i+k+1} \\
&>
\sum_{i = 1}^k (a_i - 1) \be_1^{-i+k+1} + \sum_{i \geq k+1} (a_i + 1) \be_1^{-i+k+1}.
\label{eq:K2}
\end{aligned}
\end{equation}
In the first sum we see that $a_i - 1$ is always $0$ or $-2$, and $\be_2^{-i+k+1} < \be_1^{-i+k+1}$.
Hence the first sum of the left hand side is always greater than or equal to that of the right hand side.
For the second sum, we see that $a_i + 1$ is always $0$ or $2$, and $\be_2^{-i+k+1} > \be_1^{-i+k+1}$.
Hence the second sum of the left hand side is always greater than or equal to that of the right hand side.
We also see that we only get equality if $w = p^k m^\infty$ or $w = p^{k+1} m^\infty$.

The points $\pi(m^k p^\infty)$ are treated in a similar way.
\end{proof}

We notice that the proof shows something stronger, namely that

\begin{cor}
\label{cor:U nonempty}
The vertices of $K$ have unique addresses.
\end{cor}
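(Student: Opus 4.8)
The plan is to read the uniqueness off directly from the equality analysis already carried out inside the proof of Theorem~\ref{thm:K vert}, handling the two ``extreme'' vertices $\pi(m^\infty)$ and $\pi(p^\infty)$ by a separate elementary monotonicity argument. As there, assume without loss of generality that $\beta_2<\beta_1$.

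First take a vertex $\pi(p^k m^\infty)$ with $k\ge 1$ and suppose $\pi(w)=\pi(p^k m^\infty)$ for some $w\in\{p,m\}^\BbN$. Test it against the edge $P_{k-1}$ joining $\pi(p^{k-1}m^\infty)$ to $\pi(p^k m^\infty)$: its direction is $(2/\beta_1^{k},2/\beta_2^{k})$, with slope $s_1=\beta_1^{k}/\beta_2^{k}$. Since $\pi(w)=\pi(p^k m^\infty)$, the segment from $\pi(p^{k-1}m^\infty)$ to $\pi(w)$ is exactly $P_{k-1}$, so its slope $s_2$ equals $s_1$; and because $\pi(w)-\pi(p^{k-1}m^\infty)=(2/\beta_1^{k},2/\beta_2^{k})$ has positive first coordinate, $s_2=s_1$ is equivalent to equality in \eqref{eq:K2} (with $k$ replaced by $k-1$; for $k=1$ this is the base case of the proof, for $k\ge2$ the inductive step). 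The proof of Theorem~\ref{thm:K vert} shows that equality there forces $w\in\{p^{k-1}m^\infty,\ p^k m^\infty\}$. As $\pi(p^{k-1}m^\infty)$ and $\pi(p^k m^\infty)$ differ by the nonzero vector $(2/\beta_1^{k},2/\beta_2^{k})$, we cannot have $w=p^{k-1}m^\infty$, so $w=p^k m^\infty$.

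The vertex $\pi(m^\infty)$ is dealt with separately: its first coordinate is $s_\la(m^\infty)=-\sum_{i\ge1}\beta_1^{-i}$, which is the strict minimum of $s_\la(w)=\sum_{i\ge1}w_i\beta_1^{-i}$ over all $w$, attained only when every $w_i=m$; hence $\pi(w)=\pi(m^\infty)$ forces $w=m^\infty$. Finally, the vertices $\pi(m^k p^\infty)$ are reduced to the case already done by the negation symmetry: $s_\la(\tilde w)=-s_\la(w)$ and $s_\mu(\tilde w)=-s_\mu(w)$ give $\pi(\tilde w)=-\pi(w)$, and $\widetilde{m^k p^\infty}=p^k m^\infty$, so $\pi(w)=\pi(m^k p^\infty)$ implies $\pi(\tilde w)=\pi(p^k m^\infty)$, whence $\tilde w=p^k m^\infty$ and $w=m^k p^\infty$.

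The only place where care is needed is that the equality case of \eqref{eq:K2} really does pin $w$ down to the two endpoints of the relevant edge; but this is exactly what the closing lines of the proof of Theorem~\ref{thm:K vert} record, so no new estimate is required, and the rest is just bookkeeping -- choosing the right edge to test each vertex against, and disposing of the two extreme vertices $\pi(m^\infty)$, $\pi(p^\infty)$ by hand.
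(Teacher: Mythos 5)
Your proof is correct and rests on the same observation as the paper's (much terser) proof, namely that the inequalities \eqref{eq:K1} and \eqref{eq:K2} in the proof of Theorem~\ref{thm:K vert} are strict unless $w$ is one of the two endpoints of the tested edge. You spell out the bookkeeping that the paper leaves implicit — choosing $P_{k-1}$ rather than $P_k$ so that $\Delta x>0$ and the cross-multiplication to \eqref{eq:K2} is an equivalence, ruling out the wrong endpoint $p^{k-1}m^\infty$ via $\pi(p^{k-1}m^\infty)\ne\pi(p^km^\infty)$, treating $\pi(m^\infty)$ by strict minimality of $s_\la$, and folding $m^kp^\infty$ into the $p^km^\infty$ case by the negation symmetry $\pi(\tilde w)=-\pi(w)$ — but the substance is the same.
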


\begin{proof}
To see this, we note that equations~\eqref{eq:K1} and \eqref{eq:K2} are strict inequalities when
    $w \neq p^k m^\infty$.
\end{proof}

Recall for a finite word $w \in \{p, m\}^*$, we define $K_w = T_{w} (K)$,
    and set $K_n = \bigcup_{|w|=n} K_w$.
It is easy to see that for $w, w' \in \{p, m\}^*$ we have $K_{w w'} \subset K_w$.
In particular this shows that
    \[ A \subset \dots \subset K_{n+1} \subset K_n \subset \dots \subset K. \]
A standard result on iterated functions systems gives that
    $A = \bigcap_{n\ge1} K_n$.

We will take advantage of this construction in multiple ways throughout this paper.
For example, we will show:
\begin{itemize}
\item If $(0,0) \notin K_n$ for some $n\ge1$, then $(0,0) \notin A$.
      (Section~\ref{sec:Z}.)
\item If $T_1(K_n) \cap T_{-1}(K_n) = \emptyset$ for some $n \ge1$, then $A$ is totally disconnected.
      (Section~\ref{sec:O S}.)
\item If $T_1(K_n^o) \cap T_{-1}(K_n^o) = \emptyset$ for some $n \ge1$, then $A$ satisfies the OSC
      (Section~\ref{sec:O S}.)
\end{itemize}

\section{The set $\Z$}
\label{sec:Z}

In this section we will investigate $\Z$ in greater detail.
In Subsection \ref{ssec:Z tool} we will provide the main tool for checking
    if a point is in $\Z$ and provide a proof of Theorem~\ref{thm:Z},
    giving sufficient conditions for $(\beta_1, \beta_2) \in \Z$.
In Subsection~\ref{ssec:Zm} we will discuss the higher dimensional analogue of $\Z$.
In Subsection~\ref{ssec:not Z} we will give sufficient conditions for
    $(\beta_1, \beta_2) \notin \Z$.

\subsection{Finding points in $\Z$} \
\label{ssec:Z tool}

The main tool used to computationally check if a point $(\beta_1, \beta_2) \in \Z$
    and to find a generic bound for points in $\Z$ is a generalization and strengthening of
    Proposition 2.1 and Definition 2.1 from \cite{DJK}.

\begin{thm}
\label{thm:tool}
Let $P(x) = x^n + b_{n-1} x_{n-1} + \dots + b_0$ such that
\begin{enumerate}
\item $P(\beta_j) = 0$ for $j = 1, 2, \dots, m$,
\item $\sum_{j=0}^{n-1} |b_j| \leq 2$,
\item $b_1 = b_2 = \dots = b_{m-1} = 0$,
\item $b_0 \neq 0$.
\end{enumerate}
Then there exists a neighbourhood of $(0,\dots,0)$ in $A$, based on $\beta_1,\dots, \beta_m$.
\end{thm}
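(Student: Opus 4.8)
The plan is to exploit the polynomial $P$ to rewrite a neighbourhood of the origin inside $A$ as a limit of finitely-generated pieces $K_n$, following the strategy signposted after Corollary~\ref{cor:U nonempty} (namely, $A=\bigcap_n K_n$ and $A\subset K_{n+1}\subset K_n$). First I would observe that the coefficient conditions (1)--(4) allow one to build, from the relation $P(\beta_j)=0$ for every $j$, an infinite word $v\in\{p,m\}^{\mathbb N}$ (or a finite family of such words) with the property that $s_{\lambda_j}(v)=0$ simultaneously for all $j=1,\dots,m$; here $\lambda_j=1/\beta_j$. Concretely, from $\beta_j^{\,n}=-\sum_{i=0}^{n-1}b_i\beta_j^{\,i}$ one gets $1=-\sum_{i=0}^{n-1}b_i\beta_j^{\,i-n}$, and since $\sum|b_i|\le 2$ with the $b_i$ real, each $b_i$ contributes a digit pattern in $\{p,m\}$ (splitting $|b_i|$ into at most $\lceil|b_i|\rceil$ unit steps, using that the total is $\le 2$) placed at "level" $n-i$; condition~(3) that $b_1=\dots=b_{m-1}=0$ is what guarantees that these placements do not collide across the first few levels, and $b_0\ne 0$ ensures the construction genuinely reaches down to level $n$. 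Iterating this identity (substituting $\beta_j^{\,n}$ repeatedly) produces a genuinely infinite admissible address realising $\pi(v)=(0,\dots,0)$ with controlled combinatorics.

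The substantive step is then to upgrade "$(0,\dots,0)\in A$ with a good address" to "$(0,\dots,0)$ is interior to $A$". For this I would show that the prefix structure coming from $v$ forces, for some fixed $k$, the containment $T_{u}(K)\ni (0,\dots,0)$ in its interior for enough sibling words $u$ of length $k$ that the union $\bigcup_u T_u(K)$ covers a full neighbourhood of the origin; equivalently, that the cylinders $K_u$ indexed by the relevant prefixes of $v$ and its variants tile around $(0,\dots,0)$ with overlaps. Because the maps $T_i$ are affine contractions with explicit ratios $\lambda,\mu$, the image $T_u(K)$ is a scaled translate of the convex hull $K$ (whose vertices are given explicitly by Theorem~\ref{thm:K vert}), so the covering condition reduces to a finite system of linear inequalities in $\beta_1,\dots,\beta_m$; the hypothesis $\sum_{j=0}^{n-1}|b_j|\le 2$ is exactly the slack that makes these inequalities solvable, giving not just membership but an open neighbourhood. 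Passing from a neighbourhood covered by $K_u$'s to one covered by $A$ uses $A=\bigcap_n K_n$ together with self-similarity: $T_u(A)\subset A$, so once a neighbourhood of the origin lies in $\bigcup_u T_u(\text{interior of }K)$ and each $T_u$ fixes the relevant structure, one iterates to conclude the neighbourhood lies in $A$.

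The main obstacle I anticipate is the bookkeeping in the first step: turning the coefficient vector $(b_0,\dots,b_{n-1})$ with $\sum|b_j|\le 2$ into a legitimate $\{p,m\}$-address requires carefully distributing possibly-negative, possibly-non-integer coefficients into unit digits without exceeding the "budget" of $2$ at the critical levels, and one must check that the resulting word is the same for all $\beta_j$ simultaneously (this is where conditions (1) and (3) jointly do the work). A secondary technical point is verifying that the finite linear-inequality system for the covering is actually feasible under hypothesis~(2) rather than merely under a strict version of it; I would handle the boundary case $\sum|b_j|=2$ by a compactness/limiting argument, or by noting that condition~(4) ($b_0\ne0$) rules out the degenerate configuration. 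Once these combinatorial and geometric lemmas are in place, the conclusion — existence of a neighbourhood of $(0,\dots,0)$ in $A_{\beta_1,\dots,\beta_m}$ — follows by the contraction/intersection argument above, and specialising $m=2$, $P(x)=x^8-\text{(appropriate lower terms)}$ recovers Theorem~\ref{thm:Z}.
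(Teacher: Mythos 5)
Your proposal takes a genuinely different route from the paper, and unfortunately both of its two main steps have serious problems.

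Your second step is the more decisive gap. You propose to cover a neighbourhood of the origin by cylinders $T_u(K)$ and then ``pass to $A$ using $A=\bigcap_n K_n$'' by iterating. This does not work: $A=\bigcap_n K_n$ means the sets $K_n=\bigcup_{|u|=n}T_u(K)$ shrink down to $A$ as $n\to\infty$, so covering a neighbourhood by level-$k$ cylinders $K_u$ gives only $U\subset K_k$, which says nothing about whether $U\subset K_{k+1}, K_{k+2}, \dots$. In fact one can easily have $A$ of measure zero (e.g.\ when $\beta_1\beta_2>2$) while $\bigcup_{|u|=k}T_u(K)$ still covers a neighbourhood of $(0,\dots,0)$ for small $k$, so the implication you want is simply false. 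To get a genuine open neighbourhood inside $A$ you must produce, for \emph{every} target $(x_1,\dots,x_m)$ in some ball, an actual address $(a_i)\in\{p,m\}^{\mathbb N}$ with $\pi(a)=(x_1,\dots,x_m)$; a finite geometric covering cannot accomplish this.

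Your first step is also vaguer than it can afford to be, and it misidentifies the role of the hypotheses. ``Splitting $|b_i|$ into unit steps'' is not meaningful when the $b_i$ are arbitrary reals, and your reading of condition (3) as preventing ``collisions of placements'' across levels is not what it does. The paper's proof is a direct, constructive digit-selection algorithm: given any small $(x_1,\dots,x_m)$, one solves an $m\times m$ Vandermonde-type linear system to get small initial values $u_{-n},\dots,u_{-n+m-1}$ (invertibility uses that the $\beta_j$ are distinct), sets $u_{-n+m}=\dots=u_0=0$, and then inductively chooses $a_i\in\{\pm1\}$ and $u_i\in[-1,1]$ via $u_i=a_i-\sum_{k}b_ku_{i-n+k}$; the budget $\sum|b_k|\le 2$ is exactly what makes the greedy choice of $a_i$ always possible, and a telescoping computation using $P(\beta_j)=0$ shows that $\sum_j a_j\beta_i^{-j}=x_i$. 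Condition (3), $b_1=\cdots=b_{m-1}=0$, is there to make the boundary terms in that telescoping sum reduce precisely to $x_i$ after the zero-padding. Condition (4), $b_0\ne0$, makes the linear system solvable. None of this two-phase structure (first membership of $(0,\dots,0)$, then a covering argument for interiority) appears; the neighbourhood is obtained in one pass because the targets $x_j$ are free parameters from the start.

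So the gap is concrete: you need the linear-system step that lets you hit an arbitrary small target simultaneously in all coordinates, and you need a digit-selection mechanism (the $u_i$-recurrence with the $\le 2$ budget) that actually produces an address; the covering-by-cylinders step should be discarded.
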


Using this theorem, it suffices to find a polynomial $P$ in terms of $\beta_1, \dots, \beta_m$
    such that the four conditions hold for all $1 < \beta_j < C$ for some $C$.
This is a purely computational search.

Consider the polynomial.
\[ P(x) = x^8 - \frac{\beta_2^8-\beta_1^8}{\beta_2^7-\beta_1^7} x^7 +
          \frac{\beta_2^7 \beta_1^7 (\beta_2-\beta_1)}{\beta_2^7-\beta_1^7} \]
A quick check shows that $P(\beta_1) = P(\beta_2) = 0$.
Further, for all $\beta_1, \beta_2 < 1.202$ then we have
\[ \left|\frac{\beta_2^8-\beta_1^8}{\beta_2^7-\beta_1^7} \right| +
          \left|\frac{\beta_2^7 \beta_1^7 (\beta_2-\beta_1)}{\beta_2^7-\beta_1^7} \right| \leq 2 \]
In fact, a stronger result can be shown.
By explicitly solving for when
\[ \left|\frac{\beta_2^8-\beta_1^8}{\beta_2^7-\beta_1^7} \right| +
          \left|\frac{\beta_2^7 \beta_1^7 (\beta_2-\beta_1)}{\beta_2^7-\beta_1^7} \right| \leq 2 \]
we find that all $\beta_1 \neq \beta_2$ in grey in Figure~\ref{fig:Z20} have
    the desired properties.

\begin{proof}[Proof of Theorem~\ref{thm:tool}]
Let $P$ have the required properties.

Let $u_{-n}, \dots, u_{-n+m-1}$ satisfy
\[
\left[ \begin{array}{c}x_1 \\ x_2 \\ \vdots \\ x_m \end{array} \right] =
b_0 \left[ \begin{array}{cccc}\beta_1^{-1} & \beta_1^{-2} & \dots & \beta_1^{-m} \\
                          \beta_2^{-1} & \beta_2^{-2} & \dots & \beta_2^{-m} \\
                              \vdots   &     \vdots   &        & \vdots       \\
                          \beta_m^{-1} & \beta_m^{-2} & \dots & \beta_m^{-m}
       \end{array} \right]
\left[ \begin{array}{c}u_{-n} \\ u_{-n+1} \\ \vdots \\ u_{-n+m-1} \end{array} \right].
\]
We see that this system will have a solution as all of the $\beta_i$ are distinct.
Moreover, we see that if the $x_j$ are sufficiently close to $0$, then
    the $u_j$ will also be sufficiently close to $0$.
Choose $\delta$ such that if $|x_j| < \delta$, then $|u_j| \leq 1$.

Set $u_{-n+m} = \dots = u_0 = 0$.
We will choose the $u_i$ and $a_i$ for $i = 1, 2, 3, \dots$ by induction, such that
    \[ u_i := a_i - \left(\sum_{k=0}^{n-1} b_{k} u_{i-n+k}\right) \]
and such that $u_i \in [-1,1]$ and $a_i \in \{-1, +1\}$.
We see that this is possible, as, by induction, $|u_j| \leq 1$ for all $j \leq i-1$.
Furthermore,
\begin{eqnarray*}
\left|\sum_{k=0}^{n-1} b_k u_{i-n+k}\right| & \leq & \sum_{k=0}^{n-1} |b_k u_{i-n+j}| \\
                                 & \leq & \sum_{k=0}^{n-1} |b_k| \\
                                 & \leq & 2,
\end{eqnarray*}
by our assumption on the $b_k$.
Hence there is a choice of $a_i$, either $+1$ or $-1$ such that
    $a_i - \sum_{k=0}^{n-1} b_k u_{i-n+k}  \in [-1, 1]$.

We claim that this sequence of $a_i$ has the desired properties.

Let $b_n = 1$ for ease of notation.
To see this, notice for $i = 1, 2$ that
\begin{eqnarray*}
\sum_{j\geq 1} a_j \beta_i^{-j}
    & = & \sum_{j\geq 1} \left(\left(\sum_{k=0}^{n-1} b_k u_{j-n+k}\right) + u_j \right)\beta_i^{-j} \\
    & = & \sum_{j\geq 1} \sum_{k=0}^{n} b_k u_{j-n+k} \beta_i^{-j}  \\
    & = & \sum_{k=0}^{n} \sum_{j\geq 1} b_k u_{j-n+k} \beta_i^{-j}  \\
    & = & \sum_{k=0}^{n} b_k \beta_i^k \sum_{j\geq 1} u_{j-n+k} \beta_i^{-j-k}  \\
    & = & \beta_i^{-n} \sum_{k=0}^{n} b_k \beta_i^k \sum_{j\geq 1} u_{j-n+k} \beta_i^{-j-k+n}  \\
    & = & \beta_i^{-n} \sum_{k=0}^{n} b_k \beta_i^k \sum_{\ell \geq -n+1} u_{\ell+k} \beta_i^{-\ell-k}  \\
    & = & \beta_i^{-n} \sum_{k=0}^{n} b_k \beta_i^k
          \left(\sum_{\ell = -n+1}^{-k} u_{\ell+k} \beta_i^{-\ell-k} +
                \sum_{\ell \geq 1} u_{\ell} \beta_i^{-\ell} \right)   \\
    & = & \left(\beta_i^{-n} \sum_{k=0}^{n} b_k \beta_i^k
                \sum_{\ell = -n+1}^{-k} u_{\ell+k} \beta_i^{-\ell-k}\right) +
          \left(\beta_i^{-n} P(\beta_i) \sum_{\ell \geq 1} u_{\ell} \beta_i^{-\ell} \right)   \\
    & = & \beta_i^{-n}
          \sum_{k=0}^{n} \sum_{\ell = -n+1}^{-k} b_k \beta_i^k u_{\ell+k} \beta_i^{-\ell-k}. \\
\end{eqnarray*}

Thus, by our construction, we have $b_1 = b_2 = \dots = b_{m-1} = 0$ and
    $u_{m-n} = \dots = 0$. Hence this simplifies to
\begin{eqnarray*}
\sum_{j\geq 1} a_j \beta_i^{-j}
    & = & \beta_i^{-n} \sum_{\ell = -n+1}^{0} b_0 u_{\ell} \beta_i^{-\ell} +
          \beta_i^{-n}
          \sum_{k=m}^{n} \sum_{\ell = -n+1}^{-k} b_k \beta_i^k u_{\ell+k} \beta_i^{-\ell-k} \\
    & = & \beta_i^{-n} \sum_{\ell = -n+1}^{0} b_0 u_{\ell} \beta_i^{-\ell} +
          \beta_i^{-n}
          \sum_{k=m}^{n} \sum_{\ell = -n+1}^{-k} b_k \beta_i^k\cdot 0 \cdot \beta_i^{-\ell-k} \\
    & = & b_0 (u_{-n+1} \beta_i^{-1} + u_{-n+2} \beta_i^{-2} + \dots + u_{-n+m+1} \beta_i^{-m}) \\
    & = & x_i,
\end{eqnarray*}
which gives the desired result.
\end{proof}

\subsection{Higher dimensional analogues of $\Z$}
\label{ssec:Zm}

We see from Theorem~\ref{thm:tool} that to prove Theorem~\ref{thm:Zm}, it
    suffices to find $P$ satisfying certain criteria.
In this subsection we will show that such a polynomial exists for all $m$.

\begin{lemma}
Let $P(x) = x^n + a_{n-1} x^{n-1} + \dots + a_0$ be such that
    $\sum_{i=0}^{n-1} |a_i| < 2$ and $P(\beta_i) = 0$ for $i = 1, 2, \dots, m$.
Let $S \subset \{0, 1, \dots, n-1\}$ be such that $|S| < n-m$.
Then there exists a neighbourhood of $(\beta_1, \dots, \beta_m)$ such that
    for all $(\hat \beta_1, \dots, \hat \beta_m)$ in this neighbourhood there
    exists a polynomial
    $\hat P(x) = x^{n} + \hat a_{n-1} x^{n-1} + \dots + \hat a_0$ where
\begin{itemize}
\item $a_s = \hat a_s$ for all $s \in S$,
\item $\sum_{i=0}^{n-1} |\hat a_i| < 2$.
\item $\hat P(\hat \beta_i) = 0$ for $i = 1, 2, \dots, m$.
\end{itemize}
\end{lemma}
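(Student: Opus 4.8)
The plan is to set up the problem as a linear-algebraic perturbation argument. Fix the given polynomial $P$ with root set $\{\beta_1,\dots,\beta_m\}$ and coefficients $a_0,\dots,a_{n-1}$, and let $S\subset\{0,\dots,n-1\}$ with $|S|<n-m$. I want to find, for $(\hat\beta_1,\dots,\hat\beta_m)$ near $(\beta_1,\dots,\beta_m)$, a monic polynomial $\hat P$ of degree $n$ that (a) agrees with $P$ on the coefficients indexed by $S$, (b) vanishes at the $\hat\beta_i$, and (c) still satisfies $\sum_{i=0}^{n-1}|\hat a_i|<2$. Condition (c) will follow from continuity once (a) and (b) are arranged smoothly: since $\sum|a_i|<2$ strictly, any sufficiently small perturbation of the coefficients keeps the strict inequality, so the real content is producing $\hat P$ satisfying (a) and (b) and depending continuously on the $\hat\beta_i$.

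First I would count degrees of freedom. A monic degree-$n$ polynomial has $n$ free coefficients $\hat a_0,\dots,\hat a_{n-1}$. Condition (a) fixes $|S|$ of them. Condition (b) imposes $m$ linear equations $\hat P(\hat\beta_i)=0$, $i=1,\dots,m$. The number of remaining free coefficients is $n-|S|$, and since $|S|<n-m$ we have $n-|S|>m$, so the linear system for the unknowns $\{\hat a_j : j\notin S\}$ is underdetermined: $m$ equations in more than $m$ unknowns. Explicitly, writing $\hat\beta_i^n + \sum_{s\in S} a_s\hat\beta_i^s + \sum_{j\notin S}\hat a_j\hat\beta_i^j = 0$, this is a linear system $M(\hat\beta)\,\mathbf{\hat a} = \mathbf{c}(\hat\beta)$ where $M(\hat\beta)$ is the $m\times(n-|S|)$ matrix with entries $\hat\beta_i^j$ for $j\notin S$, and the right-hand side is $-\hat\beta_i^n - \sum_{s\in S}a_s\hat\beta_i^s$. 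At $\hat\beta = \beta$ the vector of the original free coefficients $\{a_j : j\notin S\}$ is a solution, so the system is consistent there.

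The key step is to produce a solution that depends continuously — indeed smoothly — on $\hat\beta$ and reduces to the original coefficients at $\hat\beta=\beta$. For this I would argue that $M(\beta)$ has full row rank $m$: its columns are among the columns of a Vandermonde-type matrix in the distinct nodes $\beta_1,\dots,\beta_m$, and one can select $m$ of the indices $j\notin S$ forming a submatrix of nonzero determinant. (Since the $\beta_i$ are distinct and there are more than $m$ available columns, a generalized Vandermonde minor is nonzero; this is where one must be a little careful, but any $m$ distinct powers give a nonsingular generalized Vandermonde matrix over distinct positive nodes — e.g. by the theory of totally positive matrices, or simply because such a minor is a nonzero polynomial in the nodes that does not vanish for distinct positive values.) Fixing such a set $J$ of $m$ indices, the square submatrix $M_J(\hat\beta)$ is invertible for $\hat\beta$ near $\beta$ by continuity of the determinant; set the remaining free coefficients $\hat a_j$ for $j\notin S\cup J$ equal to their original values $a_j$, and solve $M_J(\hat\beta)\,\mathbf{\hat a}_J = \mathbf{c}(\hat\beta) - (\text{contribution of the fixed }a_j\text{'s})$ by Cramer's rule. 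This gives $\hat a_j$, $j\in J$, as rational — hence continuous — functions of $\hat\beta$ on a neighbourhood of $\beta$, and at $\hat\beta=\beta$ they return $a_j$ by uniqueness of the solution of the nonsingular system. Finally, on this neighbourhood all coefficients $\hat a_i$ are continuous in $\hat\beta$ and equal $a_i$ at $\beta$, so shrinking the neighbourhood if necessary we get $\sum_{i=0}^{n-1}|\hat a_i| < 2$, and conditions (a), (b) hold by construction. The main obstacle is the full-rank (generalized Vandermonde) claim for $M(\beta)$; everything else is bookkeeping and continuity.
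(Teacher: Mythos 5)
Your argument is correct, and it follows a genuinely different route through the linear algebra from the paper's. The paper factors $P(x)=\prod_{i=1}^m(x-\beta_i)\,R(x)$, sets $\tilde P(x)=\prod_{i=1}^m(x-\hat\beta_i)\,R(x)$ (which vanishes at the $\hat\beta_i$ by construction and has coefficients close to those of $P$ by continuity), and then repairs the coefficients indexed by $S$ by adding $\sum_{s\in S}(a_s-\tilde a_s)T_s(x)$, where each $T_s$ has degree $<n$, vanishes at all $\hat\beta_i$, has $s$-th coefficient $1$, and has the other $S$-indexed coefficients equal to $0$; the existence of $T_s$ is justified only by the dimension count $n-|S|>m$. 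You skip the factorization entirely and parametrize $\hat P$ directly by the free coefficients $\hat a_j$, $j\notin S$, impose the $m$ linear constraints $\hat P(\hat\beta_i)=0$, pick a nonsingular $m\times m$ generalized-Vandermonde subsystem, and solve it by Cramer's rule while pinning the remaining unknowns at their original values $a_j$. The two proofs rest on the same underlying fact --- that the $m\times(n-|S|)$ matrix $[\hat\beta_i^{\,j}]_{j\notin S}$ has full row rank for distinct positive nodes --- but you make this explicit and name the justification (total positivity / Schur-polynomial positivity of the generalized Vandermonde minor), whereas the paper's phrase ``such a polynomial exists as $n-|S|>m$'' silently assumes it. Your version is somewhat more hands-on and makes the continuous dependence on $\hat\beta$ transparent (rational functions via Cramer); the paper's factor-then-correct decomposition is a bit more modular. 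Both are valid.
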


\begin{proof}
Let $R$ be such that $P(x) = \prod(x-\beta_i) R(x)$.
For $\hat \beta_i$ close to  $\beta_i$, we see that the coefficients
    of $\tilde{P}(x) = \prod(x - \hat\beta_i) R(x) = x^n + \tilde a_{n-1} x^{n-1} + \dots + \tilde a_{0}$ are
close to those of $P$.
For all $s \in S$,
   let $T_s(x) = b^{(s)}_{n-1} x^{n-1} + \dots + b^{(s)}_0$ be a polynomial such that
  \begin{itemize}
      \item $b_{s'}^{(s)} = 0$ for $s' \in S, s' \neq s$.
      \item $b_{s}^{(s)} = 1$
      \item $T_s(\hat\beta_i) = 0$ for $i =\ 1, 2, \dots, m$.
  \end{itemize}
We see that such a polynomial exists as $n - |S| > m$.
Set
    \[ \hat P(x) = \tilde{P}(x) + \sum_{s \in S} (a_s - \tilde a_s) T_s(x). \]
It is easy to observe that $a_s = \hat a_s$ for $s \in S$, and that
    $\hat P(\hat\beta_i) = 0$ for $i = 1, 2, \dots, m$.
Further observe that for $\hat\beta_i$ close to $\beta_i$ we have that
    $\hat a_i$ are close to $a_i$.
Hence by continuity, we can choose a neighbourhood of $(\beta_1, \dots, \beta_m)$
such that the resulting $\hat a_i$ are close enough to $a_i$ so that
    $\sum|\hat a_i| < 2$.
We see that $\hat P$ has the desired properties.
\end{proof}

\begin{cor}
If there exists a $P \in \BbR[x]$ monic of degree at least $2m-1$, such that
    $a_1 = \dots = a_{m-1} = 0$, $\sum |a_i| < 2$ and $(x-1)^m | P$ then
    there is a neighbourhood around $(1,1,\dots, 1)$ that is contained in $\Z$.
\end{cor}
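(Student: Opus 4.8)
The plan is to deduce this corollary from the preceding lemma together with Theorem~\ref{thm:tool} by a limiting/perturbation argument: the polynomial $P$ with $(x-1)^m\mid P$ plays the role of the ``ideal'' polynomial vanishing to order $m$ at the degenerate point $\beta_1=\dots=\beta_m=1$, and the lemma lets us perturb it to a genuine polynomial vanishing at distinct $\hat\beta_1,\dots,\hat\beta_m$ near $1$ while keeping the coefficient constraints.

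First I would fix $P(x)=x^n+a_{n-1}x^{n-1}+\dots+a_0$ monic of degree $n\ge 2m-1$ with $a_1=\dots=a_{m-1}=0$, $\sum_{i=0}^{n-1}|a_i|<2$, and $(x-1)^m\mid P$. Since $\deg P = n \ge 2m-1$, we have $n-m \ge m-1$, so the index set $S=\{1,2,\dots,m-1\}$ satisfies $|S|=m-1 < n-m$, which is exactly the hypothesis of the lemma. However, the lemma as stated applies to a polynomial already vanishing at \emph{distinct} $\beta_1,\dots,\beta_m$; here $1$ is a root of multiplicity $m$, so I would need a mild variant of the lemma in which $P$ has a root of multiplicity $m$ at $1$ rather than $m$ distinct roots. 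The proof of the lemma goes through verbatim in this case: write $P(x)=(x-1)^m R(x)$, replace $(x-1)^m$ by $\prod_{i=1}^m(x-\hat\beta_i)$ for $\hat\beta_i$ close to $1$ and distinct, set $\tilde P(x)=\prod(x-\hat\beta_i)R(x)$ whose coefficients are close to those of $P$, then correct the coefficients indexed by $S$ using the interpolation polynomials $T_s$ (which exist since $n-|S|>m$) to produce $\hat P$ with $\hat a_s=a_s=0$ for $s\in S$, with $\hat P(\hat\beta_i)=0$, and — by continuity, using $\sum|a_i|<2$ strictly — with $\sum_{i=0}^{n-1}|\hat a_i|<2$.

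Next, for any $(\hat\beta_1,\dots,\hat\beta_m)$ sufficiently close to $(1,\dots,1)$ with the $\hat\beta_i$ distinct and each $>1$, I would apply Theorem~\ref{thm:tool} to the polynomial $\hat P$. Its four hypotheses hold: $\hat P(\hat\beta_j)=0$ for all $j$ (third bullet of the lemma); $\sum_{j=0}^{n-1}|\hat b_j|=\sum|\hat a_i|<2\le 2$ (second bullet); $\hat b_1=\dots=\hat b_{m-1}=0$ since $a_1=\dots=a_{m-1}=0$ and these coefficients are preserved (first bullet, with $S=\{1,\dots,m-1\}$); and $\hat b_0\ne 0$ — this last point needs a word of care, handled by noting that $P(0)=(-1)^m R(0)\ne 0$ provided $R(0)\ne0$, i.e. $0$ is not a root of $R$; one can always arrange this (e.g. $R$ has all roots with modulus $\ge 1$, or simply add the hypothesis $a_0\ne0$, which is harmless), and then by continuity $\hat a_0\ne0$ in a small enough neighbourhood. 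Theorem~\ref{thm:tool} then yields a neighbourhood of $(0,\dots,0)$ contained in $A_{\hat\beta_1,\dots,\hat\beta_m}$, i.e. $(\hat\beta_1,\dots,\hat\beta_m)\in\Z_m$.

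Finally I would assemble the statement: the union of all such admissible $(\hat\beta_1,\dots,\hat\beta_m)$ contains a full one-sided neighbourhood of $(1,\dots,1)$ (intersected with $\hat\beta_1<\dots<\hat\beta_m$, which is the relevant regime and is dense near the diagonal), so ``there is a neighbourhood around $(1,\dots,1)$ contained in $\Z$'' in the appropriate sense; combined with Theorem~\ref{thm:tool}'s conclusion this is precisely Theorem~\ref{thm:Zm} with the explicit constant $C_m$ coming from the size of the neighbourhood. I expect the main obstacle to be purely bookkeeping: verifying that the perturbation lemma really does survive the passage from $m$ distinct roots to a single root of multiplicity $m$ (it does, since its proof only uses $\prod(x-\beta_i)$ as a factor and never the distinctness of the $\beta_i$ beyond the existence of the $T_s$, which needs only $n-|S|>m$), and confirming the nondegeneracy $\hat b_0\ne0$, which forces one either to include $a_0\ne0$ among the hypotheses or to extract it from a structural assumption on $R$. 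No genuinely hard analytic step is involved; the corollary is essentially a repackaging of the lemma plus Theorem~\ref{thm:tool}.
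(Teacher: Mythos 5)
Your argument matches the paper's: both deduce the corollary from the preceding perturbation lemma together with Theorem~\ref{thm:tool}, by taking $S=\{1,\dots,m-1\}$ (the paper writes $S=\{1,\dots,m\}$, which appears to be a slip, since $0\notin S$ is exactly what lets one add a small multiple of $T_0$ to force $\hat a_0\neq 0$, and $a_m$ needs no constraint), perturbing to distinct $\hat\beta_i$ near $1$, and invoking Theorem~\ref{thm:tool}. You also correctly identify and resolve the two points the paper's two-line proof leaves implicit: that the lemma is stated for $m$ distinct roots whereas here $1$ has multiplicity $m$ (its proof never uses distinctness beyond the existence of the $T_s$), and that $\hat b_0\neq0$ must be arranged (the paper's ``use $T_0$'' device).

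One small arithmetic slip: from $n\geq 2m-1$ you only get $|S|=m-1\leq n-m$, not the strict inequality $|S|<n-m$ that the lemma's hypothesis demands; as stated, the lemma requires $n\geq 2m$ when $S=\{1,\dots,m-1\}$. This off-by-one is harmless for the paper's purposes (the subsequent theorem produces polynomials of degree $mn+1\gg 2m$), and the lemma's strict inequality is not sharp, but the sentence as written is not a valid deduction and should be fixed either by assuming $\deg P\geq 2m$ or by noting that the lemma in fact only needs $|S|\leq n-m$ for generic $\hat\beta_i$.
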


\begin{proof}
We use $S = \{1,2,\dots, m\}$ and the neighbourhood of $(1,1, \dots, 1)$.
If $a_0 = 0$, then we can use the polynomial $T_0$ to perturb $P$.
\end{proof}

\begin{thm}
Given $m \in \BbN$ there exists an $n \in \BbN$, and a polynomial
    $P(x) = x^{m n + 1} - x^{nm} + b_{m-1} x^{(m-1) n} + b_{m-2} x^{(m-2) n} +  \dots
				 + b_{0}$
     such that $(x-1)^m | P$ and $1 + \sum_{i=0}^{m-1} |{b_i}| < 2$.
\end{thm}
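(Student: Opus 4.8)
The plan is to treat $b_0,\dots,b_{m-1}$ as unknowns, encode the divisibility $(x-1)^m\mid P$ as the linear system
\[
P^{(k)}(1)=0,\qquad k=0,1,\dots,m-1,
\]
and then show this system has a solution whose $\ell^1$-norm $\sum_{i}|b_i|$ tends to $0$ as $n\to\infty$; any $n$ large enough that this norm is $<1$ then does the job.

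First I would write the equations out explicitly. Since $\frac{d^k}{dx^k}x^N\big|_{x=1}=(N)_k:=N(N-1)\cdots(N-k+1)$, the condition $P^{(k)}(1)=0$ becomes
\[
(mn+1)_k-(mn)_k+\sum_{i=0}^{m-1}b_i\,(in)_k=0 .
\]
For $k=0$ this is just $\sum_{i=0}^{m-1}b_i=0$. For $k\ge1$ one has $(0)_k=0$, so $b_0$ drops out, and using the elementary identity $(mn+1)_k-(mn)_k=k\,(mn)_{k-1}$ the remaining equations read
\[
\sum_{i=1}^{m-1}b_i\,(in)_k=-k\,(mn)_{k-1},\qquad k=1,\dots,m-1 .
\]
This is a square $(m-1)\times(m-1)$ system $M_n b=c_n$ in $b=(b_1,\dots,b_{m-1})$; once it is solved, $b_0$ is recovered from $b_0=-\sum_{i\ge1}b_i$. (That a solution exists for every $n$ can also be seen without determinants: writing $P(x)=(x-1)x^{mn}+Q(x^n)$ with $\deg Q\le m-1$, the map $Q\mapsto$ (first $m$ Taylor coefficients of $Q(x^n)$ at $1$) is injective, since $Q(x^n)$ vanishing to order $m$ at $x=1$ would force $(y-1)^m\mid Q(y)$.)

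Next comes the asymptotic analysis as $n\to\infty$, which is the heart of the matter. Dividing the $k$-th equation by $n^k$, the matrix entry becomes $(in)_k/n^k=i\bigl(i-\tfrac1n\bigr)\cdots\bigl(i-\tfrac{k-1}{n}\bigr)\to i^k$, so the rescaled matrix tends to $V=[\,i^k\,]_{1\le k,i\le m-1}$, which factors as a Vandermonde matrix times $\operatorname{diag}(1,\dots,m-1)$ and is therefore nonsingular. On the right-hand side, $(mn)_{k-1}$ is a polynomial in $n$ of degree only $k-1$, so the rescaled entry $-k\,(mn)_{k-1}/n^k=O(1/n)\to0$. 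Hence for all large $n$ the system $M_nb=c_n$ is invertible and its solution $b^{(n)}$ satisfies $b^{(n)}\to0$; consequently $b_0^{(n)}=-\sum_{i\ge1}b_i^{(n)}\to0$ as well, so $\sum_{i=0}^{m-1}|b_i^{(n)}|\to0$. Fixing $n$ large enough that this sum is $<1$ (in particular $n>m-1$) gives $1+\sum_{i=0}^{m-1}|b_i|<2$, while $(x-1)^m\mid P$ holds by construction and $\deg P=mn+1$.

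The only real content is the scaling bookkeeping in the last paragraph: one must observe that the coefficient matrix $M_n$ grows one extra power of $n$ in each successive row relative to the right-hand side $c_n$ — precisely because the ``perturbation'' $(mn+1)_k-(mn)_k=k\,(mn)_{k-1}$ has degree $k-1$, not $k$, in $n$ — and this is exactly what forces the solution to shrink. Everything else is routine: the fact that $(x-1)^m\mid P$ iff $P$ vanishes to order $m$ at $1$, and nonsingularity of a Vandermonde matrix. (As a consistency check, $m=2$ gives $b_1=-1/n$, $b_0=1/n$, so $1+|b_1|+|b_0|=1+2/n<2$ already for $n\ge3$.)
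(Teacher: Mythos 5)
Your proof is correct and takes essentially the same approach as the paper: treat the $b_i$ as unknowns in the linear system $P^{(k)}(1)=0$, $k=0,\dots,m-1$, rescale, observe that the coefficient matrix tends to a nonsingular Vandermonde-type matrix while the right-hand side tends to zero, and conclude $b_i\to 0$. Your normalization (dividing by $n^k$ and invoking $(mn+1)_k-(mn)_k=k\,(mn)_{k-1}$) is a slightly cleaner way to see the right-hand side vanish than the paper's division by $(nm)^{(k)}$, but the mechanism is identical.
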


\begin{proof}
Let
\[
P(x) = x^{m n+1} - x^{m n} + b_{m-1} x^{(m-1) n} + ... + b_{1} x^n + b_0.
\]
We see that $(x-1)^m | P$ if and only if $P(1) = P'(1) = ... = P^{(m-1)}(1) = 0$.
Using the notation $n^{(k)} = n (n-1) (n-2) \dots (n-k+1)$,
    with $n^{(k)} = 0$ for $k > n$, consider
the $k$th derivative of $P$ with respect to $x$, with $k \geq 1$:
\begin{align*}
    P^{(k)}(x) &= (n m +1 )^{(k)} x^{nm + 1-k} -
		   (n m)^{(k)} x^{nm -k}  \\ &+
		   (n (m-1))^{(k)} b_{m-1} x^{n(m-1) -k}  +  \dots +
		   n^{(k)} b_{1} x^{n-k}.
\end{align*}
We require that $P^{(k)}(1) = 0$ for $k = 0, 1, \dots, m-1$.
Evaluating $P(x)$ at $x = 1$ gives
\begin{align}
1-1  &= b_{m-1} + b_{m-2} +  \dots + b_0.
\label{eq:nonlimit0}
\end{align}
For $k = 1, \dots, m-1$, by
    dividing by $(nm)^{(k)}$ and evaluating at $x = 1$ we have
\begin{equation}
\begin{aligned}
1-\frac{(n m +1 )^{(k)} }{(nm)^{(k)}}  &=
		   \frac{(n (m-1))^{(k)}}{(nm)^{(k)}}\ b_{m-1} +
		   \frac{(n (m-2))^{(k)}}{(nm)^{(k)}}\ b_{m-2} \\
               &+ \cdots +\frac{n^{(k)}}{(nm)^{(k)}}\ b_{1}.
\end{aligned}
\label{eq:nonlimit}
\end{equation}
Taking the limit as $n$ tends to infinity in (\ref{eq:nonlimit}), we obtain
\begin{equation}
0 = \left(\frac{m-1}{m}\right)^k b_{m-1} +
    \left(\frac{m-2}{m}\right)^k b_{m-2} + \cdots +
    \left(\frac{0}{m}\right)^k b_{0}
\label{eq:limit}
\end{equation}
for $k = 0, 1, \dots, m-1$.
Here we take $\left(\frac{0}{m}\right)^0 = 1$.
Clearly, solving (\ref{eq:limit}) for the $b_i$ is equivalent to solving the linear system:
\[
\left[
\begin{array}{c}0 \\ 0 \\ \vdots \\ 0 \end{array} \right]
=
\left[
\begin{array}{ccccc}
1 & 1 & \dots & 1 & 1 \\
\frac{m-1}{m} & \frac{m-2}{m} & \dots & \frac{1}{m} &  0 \\
\left(\frac{m-1}{m}\right)^2 & \left(\frac{m-2}{m}\right)^2 & \dots & \left(\frac{1}{m}\right)^2 &  0 \\
\vdots & & & & \vdots \\
\left(\frac{m-1}{m}\right)^{m-1} & \left(\frac{m-2}{m}\right)^{m-1} & \dots & \left(\frac{1}{m}\right)^{m-1} &  0
\end{array}
\right]
\left[
\begin{array}{c}b_{m-1} \\ b_{m-2} \\ \vdots \\ b_1 \\ b_{0} \end{array} \right].
\]
The lower left $(m-1)\times (m-1)$ submatrix is the Vandermonde matrix on the terms $\frac{m-1}{m}, \frac{m-2}{m}, \dots, \frac{1}{m}$,
    with non-zero determinant $\prod_{1 \leq i < j \leq m-1} \left( \frac{i-j}{m} \right)$.
Hence there exists an $N$ such that for all $n \geq N$ the system of equations given by \eqref{eq:nonlimit0} and \eqref{eq:nonlimit}
    has non-zero determinant, and hence will always have a solution, regardless of the
    left hand side.

We see that the system of equations given by \eqref{eq:limit} has a solution of $b_i = 0$ for $i = 0, 1, \dots, m-1$.
We see in this case that the sum $\sum_{i=0}^m |b_m| = 1$.
(Here we think of $b_m = -1$ coming from the coefficient of $x^{nm}$.)

This implies that there exists an $N_0 > N$ such that for all $n \geq N_0$ the solution to
    equations \eqref{eq:nonlimit0} and \eqref{eq:nonlimit} will have solutions
    $b_0 \approx b_1 \approx \dots \approx b_{m-1} \approx 0$ and
    $b_m \approx 1$, and $\sum_{i=0}^m |b_i| \approx 1$.

This gives a polynomial with the desired property and  proves Theorem~\ref{thm:Zm}.
\end{proof}

\begin{rmk} The fact that $C_m>1$ for all $m\ge2$ was conjectured in \cite{Gunturk}. In the same
paper the author has shown, using a simple volume covering argument, that $C_m\le 2^{1/m}$ for all $m$.
\end{rmk}

\subsection{Points not in $\Z$} \
\label{ssec:not Z}

To prove that $(\beta_1, \beta_2) \notin \Z$, it suffices to show that
    $(0,0) \notin A$.
This is clearly a sufficient condition, although it is not a necessary condition.
To see that it is not necessary, notice that $(\beta_1^{(n)}, \beta_2^{(n)})$ which we will
    discuss in Section~\ref{sec:O S} have the property that
    $(0, 0) \in A$ yet $A$ satisfies the open set condition.
Moreover, by approximating $A$ by $K$, we see that there are points,
    arbitrarily close to $(0,0)$ that are not in $K$, and hence not in $A$.
As such,  $(\beta_1^{(n)}, \beta_2^{(n)}) \notin \Z$.
See Figure~\ref{fig:touching}.

It is interesting to note that $(\beta_1^{(n)}, \beta_2^{(n)})$ is on the boundary of $\S$.
It is not clear if such an example that is not on the boundary of $\S$ would exist.

Recall that we
    denote $K_{w} = T_w(K)$ and $K_n = \bigcup_{|w| = n} K_w$. The following result holds.
\begin{lemma}
\label{lem:not Z}
If there exists an $n$ such that $(0,0) \notin K_n$, then
    $(0,0) \notin A$ and $(\beta_1, \beta_2) \notin \Z$.
\end{lemma}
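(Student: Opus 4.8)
The plan is to exploit the nested structure $A\subset\dots\subset K_{n+1}\subset K_n\subset\dots\subset K$ together with the identity $A=\bigcap_{n\ge1}K_n$, both of which are recorded in the discussion following Corollary~\ref{cor:U nonempty}. Since $A\subset K_n$ for every $n$, the contrapositive is immediate: if $(0,0)\notin K_n$ for some particular $n$, then $(0,0)\notin A$. This is the entire content of the first conclusion, and it requires no computation beyond invoking the inclusion $A\subset K_n$.

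For the second conclusion, recall that $\Z=\{(\beta_1,\beta_2):(0,0)\in A^o\}$. If $(0,0)\notin A$, then \emph{a fortiori} $(0,0)\notin A^o$, since $A^o\subset A$. Hence $(\beta_1,\beta_2)\notin\Z$. So both assertions follow from the single chain of inclusions $A^o\subset A\subset K_n$.

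There is essentially no obstacle here; the lemma is a bookkeeping observation whose only purpose is to make precise the computational criterion announced in Section~\ref{sec:intro} (``If $(0,0)\notin K_n$ for some $n\ge1$, then $(0,0)\notin A$''). If one wants to be fully self-contained, the one thing worth spelling out is why $A\subset K_n$: since $A=T_1(A)\cup T_{-1}(A)$ and $A\subset K$, induction on word length gives $A=\bigcup_{|w|=n}T_w(A)\subset\bigcup_{|w|=n}T_w(K)=K_n$, using that each $T_w$ is (affine, hence) inclusion-preserving. The practical value of the lemma, to be used in the sequel and in producing Figure~\ref{fig:Z20}, is that $K_n$ is a finite union of (images of) the explicitly described convex polygon $K$ from Theorem~\ref{thm:K vert}, so membership of $(0,0)$ in $K_n$ is a finite, checkable condition. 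That observation, however, belongs to the applications rather than to the proof of the lemma itself.

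\begin{proof}
By Theorem~\ref{thm:K vert} and the remarks following it, $A=\bigcup_{|w|=n}T_w(A)\subset\bigcup_{|w|=n}T_w(K)=K_n$ for every $n\ge1$. Hence if $(0,0)\notin K_n$ for some $n$, then $(0,0)\notin A$. Since $A^o\subset A$, it follows that $(0,0)\notin A^o$, i.e.\ $(\beta_1,\beta_2)\notin\Z$.
\end{proof}
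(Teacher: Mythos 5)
Your proof is correct, and it is exactly the argument the paper intends — the lemma is stated without a written proof in the paper precisely because it follows immediately from the inclusion $A\subset K_n$ (noted just before Lemma~\ref{lem:not Z}) together with $A^o\subset A$. Nothing is missing.
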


\begin{figure}
\includegraphics[width=300pt,height=300pt]{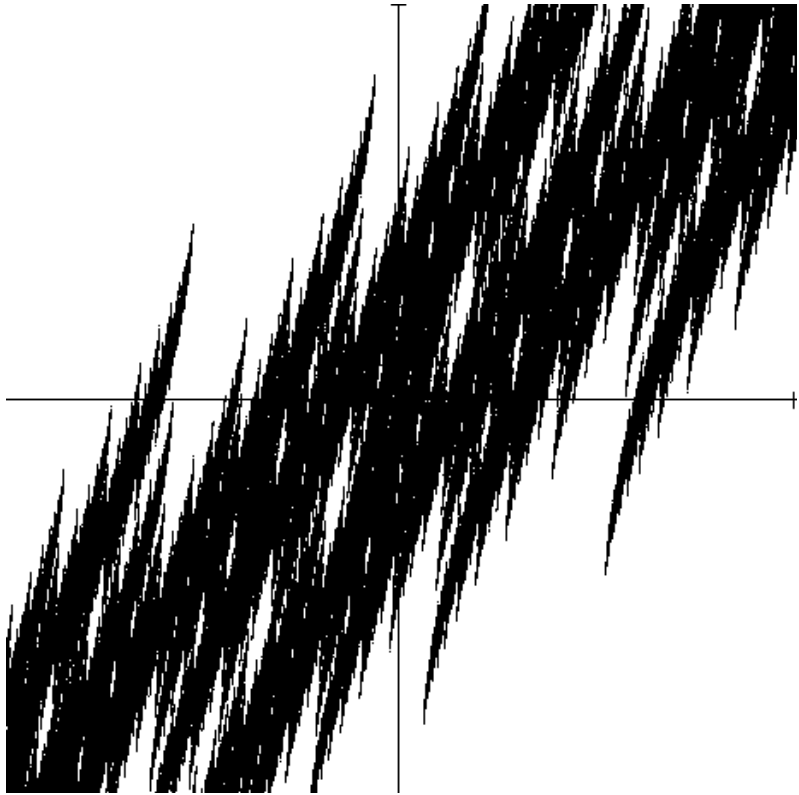}
\caption{$A_{\be_1, \be_2}$ zoomed-in around $(0,0)$, where $\be_1\approx1.57125 , \beta_2\approx1.34067$ are roots of
$x^{10}-x^9-x^8-x^7+x^6+x^5-x^4+x^3+x^2+x+1$. We have $(0,0)\in A_{\be_1,\be_2}$ but
no neighbourhood of $(0,0)$ lies in $A$.}
\label{fig:alg-not-in-z}
\end{figure}

If we were to compute the entirety of $K_n$, then it would be computationally expensive.
We observe for $w, w' \in \{p, m\}^*$ that $K_{w w'} \subset K_w$.
Hence if $(0,0) \notin K_w$ then we have that
    $(0,0) \notin K_{w w'}$ for all $w'$.
This allows for considerably more efficient computations.

In Figure~\ref{fig:Z20} we give those points that are provably not
    in $\Z$, as shown by examining $K_{20}$.
We also give those points that are provably in $\Z$ by Theorem~\ref{thm:Z}.

Note also
that if $\beta_1\beta_2>2$, then, as is well known, the Lebesgue measure of $A$ is zero,
whence all ($\beta_1,\beta_2)$ which satisfy this condition do not belong to $\Z$ either.

\begin{example}
Let $\be_1\approx1.57125 , \beta_2\approx1.34067$ be roots of $x^{10}-x^9-x^8-x^7+x^6+x^5-x^4+x^3+x^2+x+1$.
Then we have $\be_1\be_2\approx2.10653>2$, whence $(\be_1,\be_2)\notin\Z$. However,
$(0,0)$ clearly belongs to $A$ as $(0,0) = \pi((pmmmppmpppp)^\infty)$. See Figure~\ref{fig:alg-not-in-z}.
\end{example}

Observe that there is a large region of Figure~\ref{fig:Z20}, where nothing is known.

\section{The set of uniqueness}
\label{sec:uniq}

Recall that $(x,y)=\pi(w)$ has a {\em unique address} if for any
$w' \in \{p, m\}^\BbN$ with $w \neq w'$ we have $\pi(w')\neq (x,y)$.
We denote by $U_{\be_1,\be_2}$ the set of all unique addresses and by $\mathcal U_{\be_1,\be_2}$
the projection $\pi(U_{\be_1,\be_2})$ and call it the {\em set of uniqueness}.

A consequence of Corollary~\ref{cor:U nonempty} gives:
\begin{lemma}\label{lem:uniq-aux}
The set of uniqueness $\mathcal U_{\be_1,\be_2}$ is always non-empty.
\end{lemma}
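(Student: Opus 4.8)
The plan is to deduce the lemma immediately from the structural results on the convex hull $K$ established in Section~2, so the proof is essentially a one-line observation. First I would recall that $A_{\be_1,\be_2}=\{\pi(w):w\in\{p,m\}^\BbN\}$, so in particular the point $\pi(m^\infty)$ belongs to $A_{\be_1,\be_2}$. By Theorem~\ref{thm:K vert} (taking $k=0$), $\pi(m^\infty)$ is a vertex of $K$, and likewise $\pi(p^\infty)$ and all the points $\pi(p^km^\infty)$, $\pi(m^kp^\infty)$ for $k\ge0$ are vertices of $K$.

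Next I would invoke Corollary~\ref{cor:U nonempty}, which asserts that every vertex of $K$ has a unique address. Hence $m^\infty\in U_{\be_1,\be_2}$ (and in fact $p^km^\infty, m^kp^\infty\in U_{\be_1,\be_2}$ for all $k\ge0$), so $U_{\be_1,\be_2}\ne\emptyset$. Since $\mathcal U_{\be_1,\be_2}=\pi(U_{\be_1,\be_2})$ is the image of a non-empty set under $\pi$, it is non-empty; indeed it contains the vertex $\pi(m^\infty)$ of $K$. This proves the lemma.

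There is no genuine obstacle here: the entire content has already been done in proving Theorem~\ref{thm:K vert} and Corollary~\ref{cor:U nonempty}, where the strict inequalities \eqref{eq:K1} and \eqref{eq:K2} rule out any competing address for a vertex. The only thing worth flagging is that one should make sure the chosen vertex genuinely lies in $A_{\be_1,\be_2}$ and not merely in $K$ — but this is automatic since every $\pi(w)$ with $w\in\{p,m\}^\BbN$ is by definition a point of $A_{\be_1,\be_2}$, and $m^\infty$ is such a word. (If one wishes, one can note this gives the stronger statement that $\mathcal U_{\be_1,\be_2}$ always contains infinitely many points, namely all vertices of $K$ listed in Theorem~\ref{thm:K vert}.)
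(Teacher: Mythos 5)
Your proposal is correct and matches the paper exactly: the authors derive Lemma~\ref{lem:uniq-aux} as an immediate consequence of Corollary~\ref{cor:U nonempty}, which in turn rests on the strict inequalities~\eqref{eq:K1} and~\eqref{eq:K2} from the proof of Theorem~\ref{thm:K vert}. Nothing further is needed.
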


Now we are ready to prove the main result of this section.
Let $E_n(\mathcal{L})$ be the number of $a_1 a_2 \dots a_n$ that are prefixes for
    some infinite word in $\mathcal{L} \subset \{p, m\}^\BbN$.
We say that $\mathcal{L}$ has positive topological entropy if $E_n(\mathcal L)$
    grows exponentially.
That is, if $\liminf_{n\to\infty}\frac{\log E_n(\mathcal L)}{n} > 0$.

\begin{thm}
\label{thm:uniq}
For any $(\be_1,\be_2)$ the set $U_{\be_1, \be_2}$ has positive topological entropy.
\end{thm}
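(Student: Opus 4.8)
The plan is to exhibit, inside $U_{\be_1,\be_2}$, a subshift of finite type (or at least a closed subset with exponential complexity) built from the unique addresses whose existence is guaranteed by Corollary~\ref{cor:U nonempty}. Since we may assume $\be_2<\be_1$ (equivalently $\la<\mu$), the proof of Theorem~\ref{thm:K vert} shows that the vertex addresses $p^k m^\infty$ and $m^k p^\infty$ are unique, and inspection of \eqref{eq:K1}--\eqref{eq:K2} gives strict separation. The key point is that uniqueness of an address $w=w_1w_2\cdots$ is a ``local'' condition in the following sense: if $\pi(w)=\pi(w')$ with $w\ne w'$, let $j$ be the first index where they differ; then applying $T_{w_1\cdots w_{j-1}}^{-1}$ (an affine bijection) we get two points $\pi(\sigma^{j-1}w)$ and $\pi(\sigma^{j-1}w')$ that agree, with leading symbols $p$ and $m$ respectively. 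Hence $\pi(p\,v)=\pi(m\,v')$ for the tails $v,v'$, i.e.\ the two images $T_p(A)$ and $T_{-1}(A)$ overlap at that point. So $w$ fails to be unique exactly when some shift $\sigma^{i}w$ produces a point lying in $T_p(A)\cap T_m(A)$ and, moreover, reachable from the ``wrong'' side.

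Concretely, I would proceed as follows. First, pick $N$ large and consider $K_N=\bigcup_{|w|=N}K_w$. Since $\pi(p^km^\infty)$ and $\pi(m^kp^\infty)$ are vertices of $K$ with unique addresses, there is a uniform gap: there exist finite words $u=p^{a}m^{b}$ and $u'=m^{a}p^{b}$ (for suitable fixed $a,b$) such that $K_u$ and $K_{u'}$ are disjoint from $K_{v}$ for every length-$(a{+}b)$ word $v\notin\{u,u'\}$ — this is exactly the strict inequality extracted in the proof of Corollary~\ref{cor:U nonempty}, made quantitative by continuity/compactness. Consequently, any infinite word $w$ all of whose length-$(a{+}b)$ factors starting at positions $1,a{+}b{+}1,2(a{+}b){+}1,\dots$ avoid the ``dangerous'' transitions has a unique address: at every potential branching position the point $\pi(\sigma^i w)$ lies in a piece $K_v$ that is separated from the conjugate piece that the alternative address would force. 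This yields a sofic (indeed SFT) subset $\mathcal L\subset U_{\be_1,\be_2}$.

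Finally I would count. The forbidden set is a finite list of blocks of bounded length; the admissible language $\mathcal L$ is therefore described by a finite automaton, and it contains, for instance, all concatenations of the two blocks $u=p^am^b$ and $u'=m^ap^b$ (and many more). Since there are $2^{n}$ words of length $n(a{+}b)$ formed from $\{u,u'\}$, we get $E_{n(a+b)}(\mathcal L)\ge 2^{n}$, hence $\liminf_n \frac{\log E_n(\mathcal L)}{n}\ge \frac{\log 2}{a+b}>0$, and a fortiori $U_{\be_1,\be_2}$ has positive topological entropy. The main obstacle is the second step: making precise \emph{which} finite blocks guarantee uniqueness. One must argue that controlling the address only at the ``checkpoints'' (every $a{+}b$ symbols) is enough to forbid \emph{all} coincidences $\pi(w)=\pi(w')$, not just those where the first disagreement happens at a checkpoint — this requires observing that if the first disagreement is at position $j$, then the block of $w$ covering $j$ (of length $a{+}b$, aligned to the grid) already sits in a separated piece of $K_{a+b}$, so the two addresses could not have produced the same point in the first place. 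Handling the case $\be_1\be_2>2$ (where $A$ has measure zero but may still be connected) needs no extra care, since the argument is purely combinatorial-geometric via $K$ and uses only $\be_1\neq\be_2$.
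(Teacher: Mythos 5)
Your overall strategy matches the paper's: start from the unique vertex addresses of Corollary~\ref{cor:U nonempty}, manufacture a combinatorially rich closed subset of $\{p,m\}^{\mathbb N}$ consisting of provably unique addresses, and then count. You also correctly reduce uniqueness to a shift-and-peel argument. So the shape is right, but there is a genuine gap at the step you yourself flag as ``the main obstacle,'' and the fix you propose does not work.

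The problem is the separation claim: you assert that for suitable $a,b$ the images $K_u$ and $K_{u'}$ (with $u=p^am^b$, $u'=m^ap^b$) are disjoint from $K_v$ for \emph{every} other length-$(a+b)$ word $v$. This is false whenever $A$ is connected: each $K_n$ is then connected, so each piece $K_v$ must meet at least one other piece of the same generation, and in particular $K_{p^am^b}$ certainly meets $K_{p^am^{b-1}p}$ (both sit inside $K_{p^am^{b-1}}$ and share its boundary). The uniqueness of the vertex $\pi(p^km^\infty)$ does \emph{not} propagate to separation of an entire cylinder from all its siblings; it only gives a much weaker statement. Consequently the ``checkpoint'' argument — that controlling $w$ on a grid of blocks forces uniqueness — has no foundation, and the $2^n$ lower bound for $E_{n(a+b)}$ has nothing to count.

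What the paper actually establishes is exactly the minimal separation needed: for each $k$ there is an $L_k$ such that $\pi([p^km^{L_k}])$ is disjoint from $\pi([m])$ (and one takes $L=\max_k L_k$, finite because $\pi(p^km^\infty)\to\pi(p^\infty)$, which is already at positive distance from $\pi([m])$). This pins down only the \emph{first} symbol of any address of a point in that cylinder. The set
\[
U'=\{p^{k_0}m^{k_1}p^{k_2}\cdots : k_0\ge 1,\ k_i\ge L\ (i\ge1)\}\ \cup\ (\text{its reflection})
\]
then consists of unique addresses by induction: knowing the first symbol, shift once and repeat. Positive entropy comes from the embedded product $\prod_1^\infty\{m^Lp^{L+1},m^{L+1}p^L\}$, analogous to your counting. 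If you replace your too-strong separation claim by this one-symbol-at-a-time separation (disjointness of $\pi([p^km^L])$ from $\pi([m])$ only, uniformly in $k$), your argument closes up and becomes essentially the paper's proof.
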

\begin{proof}
Let $[i_1\dots i_k]$ stand for the cylinder $\{a_j\}_{j=1}^\infty \subset \{p, m\}^\mathbb N$,
    where $a_j = i_j$ for $j = 1, \dots, k$.
As $\pi(p^k m^\infty)$ has a unique address from Corollary~\ref{cor:U nonempty}, we get that
    $\text{dist}(\pi(p^km^\infty), \pi([m])>0$, where dist stands
    for the Euclidean metric. Put
\[
L_k=\min \{j\ge 1: \text{dist}(\pi([p^km^j]), \pi([m]))>0\}
\]
and $L=\max_{k\ge1} L_k$. Note that since $\pi(p^km^\infty)$ tends to $\pi(p^\infty)$ (which
is clearly at a positive distance from $\pi([m])$), the quantity $L$ is well defined.

Put
\begin{equation}\label{eq:Uprime}
\begin{aligned}
U'&=\{p^{k_0}m^{k_1}p^{k_2}\dots \mid k_0\ge 1, \ k_i\ge L,\ i\ge1\}\\
&\cup \{m^{k_0}p^{k_1}m^{k_2}\dots \mid k_0\ge 1, \ k_i\ge L,\ i\ge1\}.
\end{aligned}
\end{equation}
Clearly, $U'$ is a subshift, i.e., a closed set such that if $a_1 a_2 \dots \in U'$, then so is
    $a_{j} a_{j+1} a_{j+2} \dots\in U'$ for any $j \geq 2$.
The set $U'$ also has positive topological entropy, since
it contains the set $\prod_1^\infty \{m^Lp^{L+1}, m^{L+1}p^L\}$ which has exponential
growth. Thus, it suffices to show that any sequence in $U'$ is a unique address.

By our construction, $\pi([p^km^{k'}])$ does not intersect $\pi([m])$ provided $k'\ge L$.
This is true for all $k > 1$.
By symmetry, the same goes for $\pi([m^kp^{k'}])$ and $\pi([p])$.
This means that for $(x,y)=\pi(w_0 w_1 w_2 \dots) = \pi(p^{k_0}m^{k_1}p^{k_2}\dots)$ with
    $k_i\ge L$, we necessarily have $w_0 = p$.
Hence the problem of showing that $(x,y) = \pi(p^{k_0}m^{k_1}p^{k_2}\dots)$
    has a unique address reduces to showing that
    $(x',y') = \pi(p^{k_0-1}m^{k_1}p^{k_2}\dots)$ has a unique address.
This argument is repeated by induction, proving the result.
\end{proof}

\begin{cor}
\label{cor:dimension}
The set of uniqueness
$\mathcal U_{\be_1,\be_2}$ has positive Hausdorff dimension for any $(\be_1,\be_2)$.
\end{cor}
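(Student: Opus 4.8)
The plan is to deduce the positive Hausdorff dimension of $\mathcal U_{\be_1,\be_2}$ from the positive topological entropy of $U_{\be_1,\be_2}$ established in Theorem~\ref{thm:uniq}. The bridge is the observation that $\mathcal U_{\be_1,\be_2}=\pi(U_{\be_1,\be_2})$, where $\pi$ is the natural projection; since $\pi$ restricted to the set $U'$ from the proof of Theorem~\ref{thm:uniq} is injective (every point of $\pi(U')$ has a unique address, by construction), it suffices to push the entropy lower bound through $\pi$ and convert it to a dimension lower bound for the image.

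First I would recall that $U'$ contains the full subshift $\prod_{1}^{\infty}\{m^{L}p^{L+1},\,m^{L+1}p^{L}\}$ on an alphabet of two blocks, each of length $2L+1$; call this sub-collection $V$. The map sending $(x_n)\in\{0,1\}^{\mathbb N}$ to the concatenation of the corresponding blocks is a bijection onto $V$, so $E_n(V)$ grows like $2^{n/(2L+1)}$ and $V$ has topological entropy $\frac{\log 2}{2L+1}>0$. Next I would put a Bernoulli$(1/2,1/2)$ measure on $\{0,1\}^{\mathbb N}$, transport it to a measure $\nu$ on $V\subset\{p,m\}^{\mathbb N}$, and let $m_*:=\pi_*\nu$ be its image on $\mathcal U_{\be_1,\be_2}$; since $\pi|_{V}$ is injective this is a well-defined Borel probability measure supported on $\pi(V)\subset\mathcal U_{\be_1,\be_2}$. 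The key estimate is a lower bound on the local dimension of $m_*$: I would show that there is a constant $c>0$ such that for $m_*$-a.e.\ point $z$,
\[
\liminf_{r\to0}\frac{\log m_*(B(z,r))}{\log r}\ge c,
\]
and then conclude $\dim_H \mathcal U_{\be_1,\be_2}\ge c>0$ by the mass distribution principle (Billingsley's lemma / Frostman).

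To get that local-dimension bound I would use the contraction structure of the IFS. A cylinder $[a_1\dots a_n]\cap V$ with $n=(2L+1)N$ corresponds to a rectangle $\pi([a_1\dots a_n])$ of diameter comparable to $\max(\be_1^{-n},\be_2^{-n})=\min(\be_1,\be_2)^{-n}$ in one direction (the wider one), and it carries $\nu$-mass exactly $2^{-N}=2^{-n/(2L+1)}$. So on the scale $r_n:=\min(\be_1,\be_2)^{-n}$ the mass of a ball is at most the mass of a bounded number of such cylinders, giving $m_*(B(z,r_n))\le C\,2^{-n/(2L+1)}=C\,r_n^{\,\log 2/((2L+1)\log\min(\be_1,\be_2))}$; interpolating between consecutive scales (the ratio $r_n/r_{n+1}$ is bounded) yields the same exponent, up to a constant, for all small $r$. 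Hence $\dim_H\mathcal U_{\be_1,\be_2}\ge \frac{\log 2}{(2L+1)\log\min(\be_1,\be_2)}>0$, which proves the corollary. The one point requiring a little care — and the main potential obstacle — is the geometric separation needed to bound the number of distinct level-$n$ cylinder rectangles a small ball of radius $r_n$ can meet: because the rectangles are thin and not a priori well separated in the narrow direction, I would argue instead that it is enough to control overlaps in the \emph{wide} direction (the $x$-coordinate if $\beta_1<\beta_2$), where distinct cylinders $[a_1\dots a_n]$ have $x$-projections that are intervals whose left endpoints are $\min(\be_1,\be_2)^{-n}$-separated or nested, so a ball of radius $\asymp r_n$ meets only boundedly many of them; alternatively one can invoke a standard covering argument for self-affine measures with uniformly contracting horizontal fibres. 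Either route is routine once the measure $m_*$ and the scales $r_n$ are set up as above.
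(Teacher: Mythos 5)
Your overall strategy (push a Bernoulli measure on a subshift of $U'$ through $\pi$ and invoke the mass distribution principle) is a legitimate alternative to what the paper does, which is to show that $\pi$ restricted to $U'$ is a bi-H\"older homeomorphism onto its image and then use that Hausdorff dimension in $\{p,m\}^{\mathbb N}$ agrees with topological entropy. Both routes ultimately hinge on the same quantitative fact, namely that two sequences in $U'$ which first disagree at position $n$ have $\pi$-images at Euclidean distance at least $C\,\max(\be_1,\be_2)^{-n}$; the paper phrases this as H\"older continuity of $(\pi|_{U'})^{-1}$, while you would use it for the ball-counting estimate.

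However, as written your proof has a genuine gap precisely at the ball-counting step that you flag as ``the main potential obstacle''. You set $r_n=\min(\be_1,\be_2)^{-n}$, the diameter of a level-$n$ cylinder rectangle, and assert a ball of radius $r_n$ meets boundedly many of them. This is not guaranteed at that scale: if $\be_1<\be_2$, the separation you can actually extract from the $U'$ construction is of order $\be_2^{-n}=\max(\be_1,\be_2)^{-n}$, which is exponentially smaller than $r_n$, so a ball of radius $r_n$ could a priori meet on the order of $(\be_2/\be_1)^n$ cylinders and the estimate collapses. Your proposed fix does not repair this. The claim that the $x$-projections (wide direction) of distinct level-$n$ cylinders ``have left endpoints that are $\min(\be_1,\be_2)^{-n}$-separated or nested'' is false: the $x$-projection is just the $\be_1$-expansion map $w\mapsto\sum a_i\be_1^{-i}$, and for $1<\be_1<2$ the $2^n$ level-$n$ cylinder intervals overlap massively; they are neither separated at scale $\be_1^{-n}$ nor nested. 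The separation that $U'$ provides is a two-dimensional statement about the pair $(s_{1/\be_1},s_{1/\be_2})$, not about either coordinate alone. The correct repair is to work at the finer scale $r_n:=c\,\max(\be_1,\be_2)^{-n}$: by the estimate from the proof of Theorem~\ref{thm:uniq} (used verbatim in the paper's proof of this corollary), any two points of $\pi(U')$ whose symbolic expansions first differ before position $n$ are at distance at least $C\max(\be_1,\be_2)^{-n}$, so a ball of radius $c\max(\be_1,\be_2)^{-n}$ with $c$ small meets at most one level-$n$ cylinder, giving $m_*(B(z,r_n))\le 2^{-\lfloor n/(2L+1)\rfloor}$ and hence
\[
\dim_H \mathcal U_{\be_1,\be_2}\ \ge\ \frac{\log 2}{(2L+1)\log\max(\be_1,\be_2)}\ >\ 0,
\]
with $\max$, not $\min$, in the denominator. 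With that correction your argument closes; as it stands, the exponent is wrong and the justification for bounded overlap is unsound.
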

\begin{proof} Put $\pi'=\pi|_{U'}$. Since $U_{\beta_1,\beta_2}$ is the set
of unique addresses, the map $\pi'$ is an injection. Also, it is H\"older continuous, since $\pi$ is. Let us show
that $(\pi')^{-1}:\pi(U')\to U'$ is H\"older continuous as well.

Suppose $\underline{a}=a_1a_2\dots$ and $\underline{a'}=a_1'a_2'\dots$ with $a_i'=a_i, 1\le i\le n-1$
and $a_n\neq a_n'$. If $n=1$, then, by the above, there exists a constant $C>0$ such that
$\text{dist}(\pi(\underline{a}), \pi(\underline{a'}))\ge C$. Hence
for a general $n$ we have $\text{dist}(\pi(\underline{a}), \pi(\underline{a'}))\ge C\beta_1^{-n}$ (we assume,
as always, $\be_1>\be_2$). Since the distance between $\underline{a}$ and $\underline{a'}$ is $2^{-n}$,
we have
\[
\text{dist}(\pi(\underline{a}), \pi(\underline{a'}))\ge C\cdot \text{dist}(\underline{a}, \underline{a'})^\varkappa,
\]
where $\varkappa>0$. Hence $(\pi')^{-1}$ is H\"older continuous. The Hausdorff dimension on $\{p, m\}^{\mathbb N}$
in the usual metric coincides with the topological entropy, whence the definition of Hausdorff dimension
together with $(\pi')^{-1}$ being H\"older continuous immediately yields
$\dim_H \mathcal U_{\be_1,\be_2}\ge \dim_H \pi(U')>0$.
\end{proof}

\begin{prop}
\label{prop:entropy-drop}
For all $(\be_1,\be_2)$, the set $\mathcal U_{\be_1, \be_2}$ has no interior points.
\end{prop}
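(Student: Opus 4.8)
The plan is to argue by contradiction. Suppose $\mathcal U_{\be_1,\be_2}$ has an interior point, so that there is an open ball $B=B((x_0,y_0),\e)\subset\mathcal U_{\be_1,\be_2}$. Since $\mathcal U_{\be_1,\be_2}\subset A$, this already forces $A$ to have non-empty interior; in particular $A$ is not totally disconnected, so by Hata's dichotomy \cite{Hata} it is connected. (If instead $A$ were totally disconnected we would be done immediately, since then $\mathcal U_{\be_1,\be_2}=A$ is totally disconnected and hence has empty interior.) Writing $A=T_p(A)\cup T_m(A)$ as a union of two compact sets, connectedness forces the overlap $O:=T_p(A)\cap T_m(A)$ to be non-empty.

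The next step is to use $O\neq\emptyset$ to produce, at arbitrarily small scales around $(x_0,y_0)$, points that are \emph{not} in $\mathcal U_{\be_1,\be_2}$. Fix $q\in O$. Then $q=\pi(pu)=\pi(mu')$ for some $u,u'\in\{p,m\}^\BbN$, and clearly $pu\neq mu'$. For any finite word $v\in\{p,m\}^*$ we have $T_v(q)=\pi(vpu)=\pi(vmu')$, and since $vpu\neq vmu'$ the point $T_v(q)$ has at least two distinct addresses, so $T_v(q)\notin\mathcal U_{\be_1,\be_2}$.

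Finally, let $w=w_1w_2\dots$ be the unique address of $(x_0,y_0)$, so that $(x_0,y_0)\in T_{w_1\dots w_n}(A)$ for every $n\ge1$. Since each $T_i$ contracts distances by a factor at most $\rho:=\max\{\be_1^{-1},\be_2^{-1}\}<1$, we have $\operatorname{diam}T_{w_1\dots w_n}(A)\le\rho^n\operatorname{diam}A\to0$; hence for $n$ large enough $T_{w_1\dots w_n}(A)\subset B\subset\mathcal U_{\be_1,\be_2}$. But $T_{w_1\dots w_n}(q)\in T_{w_1\dots w_n}(A)$ while $T_{w_1\dots w_n}(q)\notin\mathcal U_{\be_1,\be_2}$ by the previous step, a contradiction. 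Therefore $\mathcal U_{\be_1,\be_2}$ has no interior points.

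I do not expect a serious obstacle: the argument is short. The only points that need a little care are the reduction to the connected case (equivalently, disposing of the totally disconnected case via the fact, noted in the introduction, that then $\mathcal U_{\be_1,\be_2}=A$), the observation that non-uniqueness of an address is inherited by $T_v(q)$ for every finite word $v$, and the elementary estimate $\operatorname{diam}T_{w_1\dots w_n}(A)\to0$ coming from the contractivity of the IFS.
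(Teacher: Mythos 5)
Your proof is correct and is essentially the same as the paper's: both reduce to the case $T_1(A)\cap T_{-1}(A)\neq\emptyset$ (the totally disconnected case being trivial since such a set cannot contain a ball), pick a doubly-addressed point $q$ in the overlap, note that $T_v(q)$ retains two addresses for every finite $v$, and then use the density of the forward orbit of $q$ in $A$ to land such an image inside a supposed ball $B\subset\U$. The only cosmetic difference is that you arrange the landing explicitly by following the address of the ball's centre (so the cylinders shrink into $B$), whereas the paper invokes directly the standard fact that $\bigcup_k\bigcup_{j_1\dots j_k}T_{j_1}\dots T_{j_k}(q)$ is dense in the attractor; these are the same observation in two guises.
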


\begin{proof}
We have two cases.
Either $A$ is totally disconnected, or
    $T_{1}(A) \cap T_{-1}(A) \neq \emptyset$.
In the first case, the result is trivial.

Hence, assume that we are in the second case -- i.e., that
    $T_{1}(A) \cap T_{-1}(A) \neq \emptyset$.
Assume that $\U=\U_{\be_1,\be_2}$ has non-empty interior.
In particular, let $B$ be an open ball with $B \subset \U \subset A$.
Let $(x,y) \in T_{1}(A) \cap T_{-1}(A)$.
We know that
$A=\mathrm{cl}\left(\bigcup_{k\ge1}\bigcup_{j_1\dots j_k} T_{j_1}\dots T_{j_k}((x,y)) \right)$,
    since $A$ is the unique attractive fixed point of the iterated function system in the
    Hausdorff metric.
This implies that there exists a $j_1, j_2, \dots, j_k$ such that
    $T_{j_1} \dots T_{j_k}((x,y)) \in B \subset \U \subset A$.
As $(x,y) \notin \U$ then $T_{j_1} \dots T_{j_k}((x,y)) \notin \U$, a contradiction.
This proves the desired result.
\end{proof}

\begin{rmk}
Recall if $\be_1\be_2>2$, then the Lebesgue measure of $A_{\be_1,\be_2}$ is zero.
Consequently, the same is true for the set of uniqueness. One should expect $\mathcal U_{\be_1,\be_2}$
to have zero Lebesgue measure for all $(\be_1,\be_2)$, however even for $(\be_1,\be_2)\in \Z$ there
appears to be no easy way to prove this.
\end{rmk}

If the attractor has non-empty interior, we do not know whether the set of uniqueness can contain
an interior point of $A$; however, we have a partial result in this direction:

\begin{prop}\label{prop:uniq-boundary}
\begin{enumerate}
\item [(i)]
If $(x,y)=\pi(wm^\infty)$ or $\pi(wp^\infty)$ is in the set of uniqueness, then $(x,y)\in\partial A_{\be_1,\be_2}$.
\item [(ii)] We have $\pi(U')\subset \partial A_{\be_1,\be_2}$, where $U'$ is given by (\ref{eq:Uprime}).
\end{enumerate}
\end{prop}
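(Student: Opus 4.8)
The plan is to exploit the structure of the vertices of the convex hull $K$, whose addresses are exactly $p^k m^\infty$ and $m^k p^\infty$ by Theorem~\ref{thm:K vert}. The key geometric observation is that $A \subset K$ and, since $A = \bigcap_n K_n$ with $K_n = \bigcup_{|w|=n} T_w(K)$, for any finite word $w$ we have $A \cap K_w = T_w(A)$, where $K_w = T_w(K)$. Because $T_w$ is an affine bijection, it maps the interior of $K$ to the interior of $K_w$ and the boundary of $K$ to the boundary of $K_w$, and likewise $T_w$ maps $\partial A$ onto $T_w(\partial A) = \partial_{\text{rel}}(T_w(A))$ relative to the ambient plane (being a homeomorphism of $\mathbb R^2$). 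So the engine of the argument is: a point that is a vertex of $K$ lies on $\partial A$, and this property is carried along by the maps $T_w$.

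For part (i), suppose $(x,y) = \pi(wm^\infty)$ with $w = w_1\dots w_n$, and that this address is unique. Then $(x,y) = T_w(\pi(m^\infty))$, and $\pi(m^\infty)$ is the vertex of $K$ with address $m^\infty = p^0 m^\infty$ (take $k=0$ in Theorem~\ref{thm:K vert}). A vertex $v$ of the convex polygon $K$ lies on $\partial K$, indeed it is an extreme point of $K$, so there is a line through $v$ with $K$ entirely on one side; any neighbourhood of $v$ therefore contains points of $\mathbb R^2 \setminus K \supset \mathbb R^2 \setminus A$, so $v \in \partial A$. Now I claim $T_w$ maps $\partial A$ into $\partial A$. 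Indeed, if $z \in \partial A$ then every neighbourhood of $z$ meets the complement of $A$; applying the homeomorphism $T_w$, every neighbourhood of $T_w(z)$ meets $T_w(\mathbb R^2 \setminus A) = \mathbb R^2 \setminus T_w(A)$. Since $T_w(A) \subset A$, this set is contained in $\mathbb R^2 \setminus T_w(A) \supset \mathbb R^2 \setminus A$... — more carefully: $\mathbb R^2\setminus A \subset \mathbb R^2 \setminus T_w(A)$, so neighbourhoods of $T_w(z)$ meet $\mathbb R^2\setminus A$ provided they already meet $\mathbb R^2 \setminus T_w(A)$ \emph{outside} $A$. This is the point that needs care, and it is exactly where uniqueness of the address enters: I must rule out that $T_w(z)$ is an interior point of $A$ "filled in" by other cylinders $K_{w'}$, $|w'|=n$, $w'\neq w$. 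But if $(x,y)=T_w(z)$ were an interior point of $A$, then since $(x,y) \in T_w(A) = A \cap K_w$ and $(x,y)$ has the unique address $wm^\infty$, no other cylinder $K_{w'}$ with $|w'|=n$ contains $(x,y)$ — because a point in $K_{w'}\cap A = T_{w'}(A)$ would have an address beginning with $w'$, contradicting uniqueness. Hence a neighbourhood of $(x,y)$ meeting $A$ only inside $K_w$, and $A\cap K_w = T_w(A) = T_w(A\cap K)$; pulling back by $T_w^{-1}$, $z$ would be an interior point of $A$ relative to $K$, i.e. $z \in A^o$ inside $K$ — but $z$ is a vertex of $K$, so every neighbourhood of $z$ contains points outside $K \supset A$, a contradiction. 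Therefore $(x,y) \in \partial A$. The case $\pi(wp^\infty)$ is symmetric, using the vertex $p^\infty$.

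For part (ii), I take $w \in U'$ as in \eqref{eq:Uprime}, say $w = p^{k_0}m^{k_1}p^{k_2}\dots$ with $k_i \ge L$ for $i\ge 1$. By Theorem~\ref{thm:uniq} (and its proof) $w \in U_{\be_1,\be_2}$, so $\pi(w)$ has a unique address. I want to show $\pi(w) \in \partial A$; the natural route is to produce, for every $\delta>0$, a point outside $A$ within distance $\delta$ of $\pi(w)$. Fix a long prefix $v$ of $w$ of length $n$; then $\pi(w) \in T_v(A) = A\cap K_v$, and by the uniqueness of the address, as in part (i), no other level-$n$ cylinder contains $\pi(w)$, so a small neighbourhood of $\pi(w)$ meets $A$ only inside $K_v$. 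Pulling back by $T_v^{-1}$, it suffices to show $T_v^{-1}(\pi(w)) = \pi(\sigma^n w) \in \partial A$, where $\sigma$ is the shift; and $\sigma^n w$ is again a sequence in (a shifted copy of) $U'$ of the same form. Now here is where I use the defining property of $L$: the prefix of $\sigma^n w$ is $p^{k_i} m^{k_{i+1}}\dots$ (or the mirror), and by construction of $L$ the cylinder $\pi([p^{k_i} m^{k_{i+1}}])$ is disjoint from $\pi([m])$, which forces the first symbol to be $p$ and iterating shows $\pi(\sigma^n w)$ lies in $T_p(A) = A\cap K_p$ with a neighbourhood meeting $A$ only there — but then, letting $n$ vary and choosing prefixes ending in longer and longer blocks, $\pi(\sigma^n w)$ is pushed arbitrarily close to a vertex $\pi(p^\infty)$ or $\pi(m^\infty)$ of $K$. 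More directly: since $k_i \ge L \ge 1$ and the tail never "mixes", one shows by the same extreme-point argument that $\pi(w)$ is a limit of images $T_v(\text{vertex of }K)$, each of which lies on $\partial A$ by part (i), and $\partial A$ is closed. Cleanly: for each $j$, let $v^{(j)} = p^{k_0}m^{k_1}\dots$ be the prefix of $w$ up to the end of the $j$th block; then $\pi(v^{(j)} p^\infty)$ or $\pi(v^{(j)} m^\infty)$ lies on $\partial A$ by (i), and as $j\to\infty$ these points converge to $\pi(w)$ (the addresses agree on a prefix of length tending to infinity, and $\pi$ is continuous). Since $\partial A$ is closed, $\pi(w) \in \partial A$.

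The main obstacle is the bookkeeping in part (i): making rigorous the claim that uniqueness of the address of $T_w(z)$ lets one "localize" $A$ near that point inside the single cylinder $K_w$, so that being interior to $A$ is equivalent to being interior to $A$ relative to $K$ after pulling back — and then contradicting that $z$ is a vertex of $K$. Everything else is a continuity-and-closedness argument built on Theorem~\ref{thm:K vert} and Corollary~\ref{cor:U nonempty}. One should double-check the boundary versus interior dichotomy carefully in the totally disconnected case (there $\partial A = A$ and the statement is vacuous), so the content is entirely in the case $T_1(A)\cap T_{-1}(A)\neq\emptyset$.
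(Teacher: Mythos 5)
Your proof is correct but takes a genuinely different route from the paper's, especially for part (i). The paper does not invoke the convex hull here: writing $w=a_1\dots a_n$, it sets $d = \min_{1\le i\le n}\mathrm{dist}\bigl(\pi(wm^\infty),\pi([a_1\dots a_{i-1}\tilde a_i])\bigr)>0$, so any point of $A$ within distance $d$ of $(x,y)$ has an address beginning with $w$, and then observes that $(x,y-\e)$ for $0<\e<d$ cannot be such a point, since replacing some of the tail digits $m$ by $p$ only increases both coordinates, contradicting $y-\e<y$. You reach the same conclusion by pulling back by $T_w$ and using that $\pi(m^\infty)$ (resp.\ $\pi(p^\infty)$) is an extreme point of the convex hull $K$ by Theorem~\ref{thm:K vert}, hence cannot be interior to $A\subset K$; the uniqueness hypothesis, together with compactness of the finitely many other cylinders $T_{w'}(A)$, is what lets you transfer interiority of $(x,y)$ to interiority of $z=T_w^{-1}(x,y)$ --- which is the same localization step the paper performs with its minimum $d$. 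For (ii) the paper is terse (put $d'=\inf_k d_k'$ and ``proceed as in (i)''), whereas your approximation argument --- take prefixes $v^{(j)}$ of $w$, note that $\pi(v^{(j)}m^\infty)$ or $\pi(v^{(j)}p^\infty)$ lies in $U'$, hence has a unique address and is on $\partial A$ by part (i), then pass to the limit using that $\partial A$ is closed --- is arguably cleaner and sidesteps having to adapt the one-sided perturbation to a tail that is not eventually constant. In short: the paper's proof is an explicit coordinate perturbation exploiting monotonicity of the $m^\infty$ tail, while yours is a geometric argument exploiting extreme points of $K$ together with closedness of $\partial A$; the localization-by-uniqueness ingredient, which you rightly flagged as the crux, is common to both.
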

\begin{proof}(i) Let $(x,y)=\pi(wm^\infty)$ (for $\pi(wp^\infty)$ the result will follow by symmetry).
Let $w=a_1\dots a_n$ and put
$d_1=\text{dist}(wm^\infty, \pi([\tilde{a_1}])$ and
$d_i=\text{dist}(wm^\infty, \pi([a_1\dots a_{i-1}\tilde{a_i}])$ for $2\le i\le n$, where, as usual,
$\tilde a=-a$. Since $\pi(C)$ is compact for any cylinder $C$, we have $d=\min_{1\le i\le n} d_i>0$.

Now suppose $\e<d$. Then $(x, y-\e)$ is not in the attractor; indeed, if it were, then
by our construction, its address would have to begin with $a_1\dots a_n$. This would mean
that to obtain $(x, y-\e)$, one or several of the subsequent $-1$ values in the address of $(x,y)$ would have to be
replaced with $1$s, which would only increase both coordinates. Therefore, there exist arbitrarily
close points in the neighbourhood of $(x,y)$ which are not in the attractor, i.e., $(x,y)$ cannot
be an interior point of $A$.

\smallskip\noindent (ii) Put
\[
d_k'=\text{dist}(\pi(p^km^\infty), \pi([m]))=\text{dist}(\pi(m^kp^\infty), \pi([p])).
\]
We know from the proof of Theorem~\ref{thm:uniq} that $d'=\inf_{k\ge1} d_k'>0$, and the rest of the argument
goes exactly like in (i), with $\e<d'$.
\end{proof}

\section{Simultaneous expansions}
\label{sec:simult}

Put
\begin{align*}
\mathcal D_{\be_1,\be_2}&=\left\{x\in\mathbb R: \exists (a_n)_1^\infty\in\{p, m\}^\mathbb N \mid
x=\sum_{n=1}^\infty a_n\be_1^{-n}=\sum_{n=1}^\infty a_n\be_2^{-n}\right\}\\
&=A_{\be_1,\be_2}\cap \{(x,y) : y=x\}
\end{align*}
(see Figure~\ref{fig:diag}).

\begin{figure}
\includegraphics[width=350pt,height=350pt]{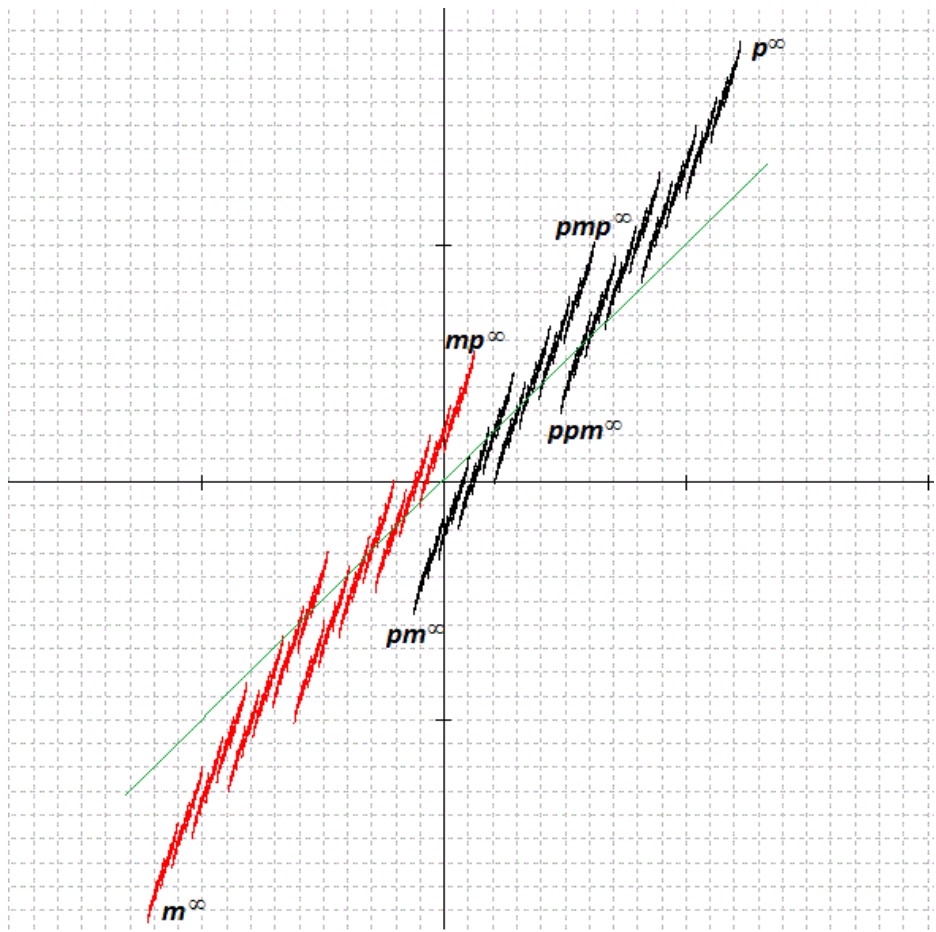}
\caption{The attractor intersecting the diagonal for $\beta_1=1.923, \beta_2 =1.754$.}
\label{fig:diag}
\end{figure}

\begin{proof}[Proof of Theorem~\ref{thm:simult}]
(i) Let $\la=\be_1^{-1}, \mu=\be_2^{-1}$ and assume $\la<\mu$.
We first claim that for any $k\ge0$ there exists a word $w\in\{p, m\}^k$ such that
$\pi(wm^\infty)$ is below the diagonal (by which we always mean the straight line $y=x$), and
$\pi(wp^\infty)$ is above it.

Note first that
that $\pi(p^\infty)=(\la/(1-\la),\mu/(1-\mu))$, and since $\la<\mu$, we have
that $\pi(p^\infty)$ lies above the diagonal. Similarly,
$\pi(m^\infty)$ lies below it -- see Figure~\ref{fig:diag}.

Proceed by induction (``bisection'') and assume the claim is true for $k=n$ and some $w$.
We will show that it is then true for $w'=wp$ or $wm$ (or both).
We have
\begin{align*}
s_\la(wmp^\infty)&=s_\la(w)-\la^{n+1}+\frac{\la^{n+2}}{1-\la}\\
&>s_\la(w)+\la^{n+1}-\frac{\la^{n+2}}{1-\la}\\
&=s_\la(wpm^\infty),
\end{align*}
in view of $\la>1/2$. Similarly, $s_\mu(wmp^\infty)>s_\mu(wpm^\infty)$.
Consider the vector from $\pi(wpm^\infty)$ to $\pi(wmp^\infty)$ given by
    \[ \pi(wmp^\infty) - \pi(wpm^\infty) = 2\left(\lambda^{n+1} - \frac{\lambda^{n+2}}{1-\lambda},
        \mu^{n+1} - \frac{\mu^{n+2}}{1-\mu}\right). \]
We see that this vector has slope
\[
\left(\frac{\mu}{\la}\right)^{n+1}\cdot \frac{2\mu-1}{1-\mu}\cdot\frac{1-\la}{2\la-1}>1,
\]
since $\la<\mu$ and the function $x\mapsto (2x-1)/(1-x)$ is strictly increasing.
Hence it would be
impossible for $\pi(wmp^\infty)$ to be below the diagonal and at the same time for
$\pi(wpm^\infty)$ to lie above it.
Now, if $\pi(wmp^\infty)$ is above the diagonal, then we put $w'=wm$;
if $\pi(wpm^\infty)$ is below the diagonal, then we put $w'=wp$;
and if both of these are true, we can choose either $w' = wm$ or $w' = wp$.

Thus, this allows us to construct a sequence of nested words $a_1\dots a_n$ such that
$\pi(a_1,a_2,\dots)$ lies on the diagonal.

\smallskip\noindent (ii)
Let us look at the bisection algorithm more closely in order to determine when we can actually choose {\em both} $wm$ and $wp$ as $w'$. 
Our aim is to construct a sequence of maps $\tau_n:[0,1]\to[0,1]$ which will keep track of all words $w$ such that $\pi(wp^\infty)$ is 
above the diagonal and $\pi(wm^\infty)$ is below it. The map $\tau_n$ turns out to be the multivalued $\beta$-transformation with
$\beta=\beta^{(n)}$, which are well understood.
Here we have that $\beta^{(n)} \uparrow \beta_2 < (\sqrt{5}+1)/2$.
The condition $\beta_2<(\sqrt5+1)/2$ implies that the number of such 
$w$ grows exponentially with $n$, which will yield the claim.

\begin{figure}
\includegraphics[width=350pt,height=350pt]{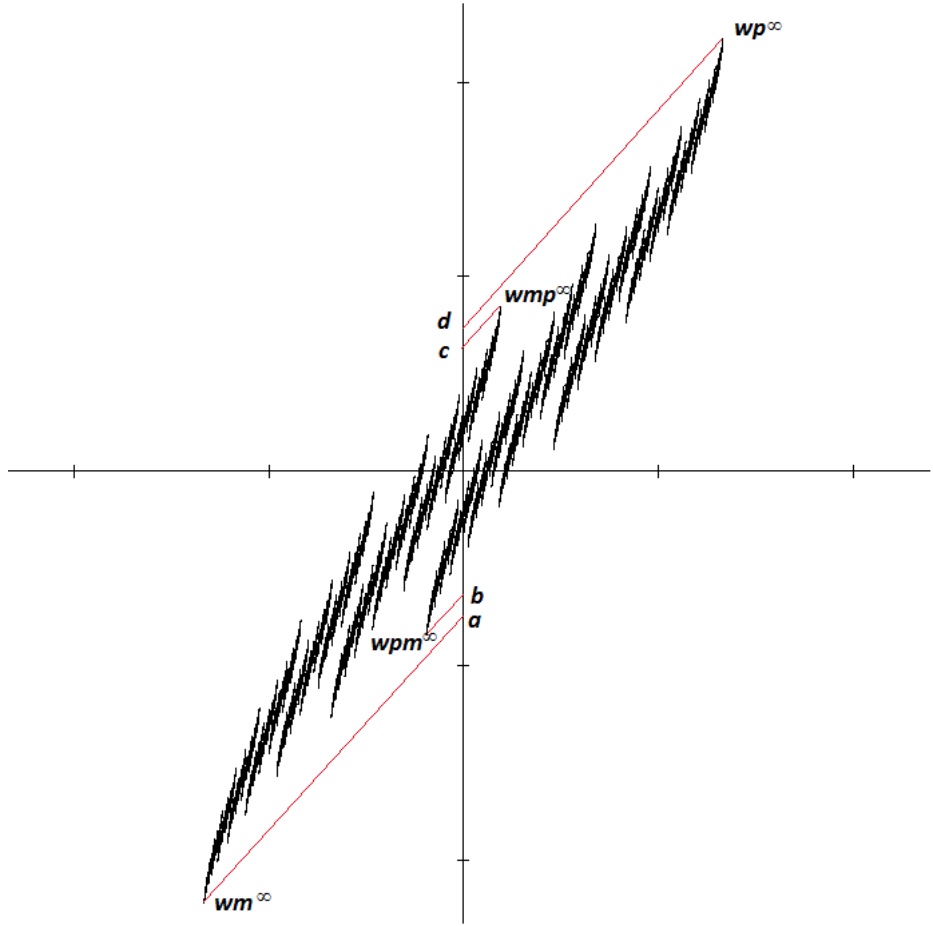}
    \caption{Projections for $\beta_1 = 1.75, \beta_2 = 1.45$}
    \label{fig:proj}
\end{figure}

Let $h$ denote
    the projection along the diagonal onto the $y$-axis, given by
    $h(x,y) = (0, y-x)$.
Put $(0,a)=h(wm^\infty), (0,b)=h(wpm^\infty), (0,c)=h(wmp^\infty)$
and finally, $(0,d)=h(wp^\infty)$ -- see Figure~\ref{fig:proj}. Let $n$ stand for the length of $w$.
A straightforward computation yields that the second coordinates of these points are respectively
\begin{align*}
a&= s_\mu(w)-s_\la(w)-\mu^{n+1}-\frac{\mu^{n+2}}{1-\mu}+\la^{n+1}+\frac{\la^{n+2}}{1-\la}, \\
b&= s_\mu(w)-s_\la(w)+\mu^{n+1}-\frac{\mu^{n+2}}{1-\mu}-\la^{n+1}+\frac{\la^{n+2}}{1-\la}, \\
c&= s_\mu(w)-s_\la(w)-\mu^{n+1}+\frac{\mu^{n+2}}{1-\mu}+\la^{n+1}-\frac{\la^{n+2}}{1-\la}, \\
d&= s_\mu(w)-s_\la(w)+\mu^{n+1}+\frac{\mu^{n+2}}{1-\mu}-\la^{n+1}-\frac{\la^{n+2}}{1-\la}.
\end{align*}
Since $1/2<\la<\mu$, we have that
$a<b<c<d$ provided $n$ is large enough. (Which we
may assume without loss of generality.) Notice that $b-a=d-c$.

We see by assumption that $a < 0$ and $d>0$.
We see that $\pi(wmp^\infty)$ is above the diagonal if and only if $c > 0$.
Hence if $c > 0$ then we can take $w' = wm$ and
    if $b < 0$ then we can take $w' = wp$.
If $b < 0 < c$, then both $w' = w m$ and $w' = wp$ are
    allowed inductive steps.

Now let $\rho_w$ denote the following affine map:
\[
\rho_w(t)=\frac{t-a}{d-a} = \frac{t-s_\mu(w)+s_\la(w)+\frac{\mu^{n+1}}{1-\mu}-
\frac{\la^{n+1}}{1-\la}}{\frac{2\mu^{n+1}}{1-\mu}-\frac{2\la^{n+1}}{1-\la}}.
\]
Put
\[
\be^{(n)}=\frac{\frac{\mu^{n+1}}{1-\mu}-
\frac{\la^{n+1}}{1-\la}}{\frac{\mu^{n+2}}{1-\mu}-\frac{\la^{n+2}}{1-\la}}\uparrow
\mu^{-1}=\be_2,\quad n\to+\infty.
\]
We have $\rho_w(a)=0, \rho_w(d)=1$ and
\begin{align*}
\rho_w(b)&=\frac{\mu^{n+1}-\la^{n+1}}{\frac{\mu^{n+1}}{1-\mu}-\frac{\la^{n+1}}{1-\la}} = 1-1/\be^{(n)}<1-\mu,\\
\rho_w(c)&=\frac{\frac{\mu^{n+2}}{1-\mu}-\frac{\la^{n+2}}{1-\la}}{\frac{\mu^{n+1}}{1-\mu}-\frac{\la^{n+1}}{1-\la}}=
1/\be^{(n)}>\mu.
\end{align*}
Note that $\rho_w(0) \in [0,1]$. We see that if $\rho_w(0) < \rho_w(c)$, then we can take $w' = wm$.
We observe that
\begin{eqnarray*}
\rho_{wm}(t) & = & \frac{t - a'}{d' - a'} \\
             & = & \frac{t-s_\mu(w m)+s_\la(wm )+\frac{\mu^{n+2}}{1-\mu}-\frac{\la^{n+2}}{1-\la}}
                   {\frac{2\mu^{n+2}}{1-\mu}-\frac{2\la^{n+2}}{1-\la}} \\
             & = & \frac{t-s_\mu(w)+s_\la(w)+\frac{\mu^{n+1}}{1-\mu}-\frac{\la^{n+1}}{1-\la}}
                   {\frac{2\mu^{n+2}}{1-\mu}-\frac{2\la^{n+2}}{1-\la}} \\
             & = & \beta^{(n)} \rho_w(t).
\end{eqnarray*}
In a similar way, if $\rho_w(0) > \rho_w(b)$ then we can take $w' = wp$,
    and \[ \rho_{wp}(t) = \beta^{(n)} \rho_w(t) + 1 - \beta^{(n)}. \]

Thus, we have a sequence of finite sets $X_n=X_n(\be_1,\be_2)$ such that $X_n=\tau_n(X_{n-1})$, where $\tau_n$
is the following multi-valued map on $[0,1]$:
\[
\tau_n(x)=\begin{cases}
\be^{(n)} x, & 0\le x<1-1/\be^{(n)},\\
\be^{(n)} x\ \  \mathrm{and} \ \ \be^{(n)} x+1-\be^{(n)},  & 1-1/\be^{(n)} \le x \le 1/\be^{(n)},\\
\be^{(n)} x+1-\be^{(n)}, & 1/\be^{(n)} < x \le 1.
\end{cases}
\]
This is a well known $\be$-expansion-generating map (with $\be=\be^{(n)}$) -- see, e.g., \cite[Section~2]{AD}.
Since $\be^{(n)}<\be_2<(1+\sqrt5)/2$, we have that for any $x_0\in(0,1-1/\be^{(n)})$, there exists
$k$ such that $\tau_k\dots \tau_1(x_0)\in (1-1/\be^{(n)}, 1/\be^{(n)})$, i.e., the trajectory of $x_0$ bifurcates
after $k$ steps. This is because $\tau_n(1-1/\be^{(n)})<1/\be^{(n)}$, in view of $(\be^{(n)})^2<\be^{(n)}+1$. This
proves that $\mathcal D_{\be_1,\be_2}$ has the cardinality of the continuum.

Furthermore, \cite[Theorem~5.2]{FS} implies that for the iterations of
a single map $\tau_n$ with $\be^{(n)}<(1+\sqrt5)/2$,
we have that no matter what $x_0\in(0,1)$, hitting the interval
$(1-1/\be^{(n)},1/\be^{(n)})$ occurs with a positive (lower) asymptotic
frequency. The argument for the sequence of maps $\{\tau_n\}$ is exactly the same, so we omit it.

Let $W_n$ denote the number of 0-1 words $w$ of length~$n$ such that $\pi(wm^\infty)$ is below the diagonal and
$\pi(wp^\infty)$ is above it. We have just shown that $W_n$ grows exponentially fast, which implies that the set
$\mathcal D_{\be_1,\be_2}\cap \{y=x\}$ has positive Hausdorff dimension
(for the same reason as in the proof of Corollary~\ref{cor:dimension}).

\smallskip\noindent
(iii) This follows from Theorem~\ref{thm:Z}.
Namely, consider in Theorem~\ref{thm:tool} the special case
    of simultaneous expansions, that is where $x_1 = x_2$,
    with the polynomial
\[ P(x) = x^8 - \frac{\beta_2^8-\beta_1^8}{\beta_2^7-\beta_1^7} x^7 +
          \frac{\beta_2^7 \beta_1^7 (\beta_2-\beta_1)}{\beta_2^7-\beta_1^7}. \]
We see that we require $|u_{-8}|, |u_{-7}| \leq 1$.
Solving for $u_{-8}$ and $u_{-7}$, we have
\begin{eqnarray*}
|u_{-8}| & = & |x_1| |b_0| (\beta_1 + \beta_2) \\
         & = & |x_1| \frac{\beta_2^7 \beta_1^7 (\beta_2 + \beta_1)}
                          {\beta_1^6 + \beta_1^5 \beta_2 + \beta_1^4 \beta_2^2 + \beta_1^3 \beta_2^3 +
                          \beta_1^2 \beta_2^4 + \beta_1 \beta_2^5 + \beta_2^6}, \\
|u_{-7}| & = & |x_1| |b_0| (\beta_1 \beta_2) \\
         & = & |x_1| \frac{\beta_2^8 \beta_1^8 }
                          {\beta_1^6 + \beta_1^5 \beta_2 + \beta_1^4 \beta_2^2 + \beta_1^3 \beta_2^3 +
                          \beta_1^2 \beta_2^4 + \beta_1 \beta_2^5 + \beta_2^6}.
\end{eqnarray*}
For $\beta_1, \beta_2 \leq 1.202\dots$,  we see that both $|b_0| (\beta_1 + \beta_2)$ and
    $|b_0| \beta_1 \beta_2$ are maximized when $\beta_1 = \beta_2 = 1.202\dots$.
This is in fact maximized for all $\beta_1, \beta_2$ where $|b_0| + |b_7| \leq 2$ at the
    exact same value, although this is not needed for the desired result.

The maximum value that $|b_0| (\beta_2 + \beta_1)$ attains with this
    restriction is approximately $1.504520168$.
This show that for all $|x_1| \leq 1/1.504520168 \approx 0.6646637388$ we have $|u_{-7}| \leq 1$.

The maximum value that $|b_0| \beta_2 \beta_1$ attains with this
    restriction is approximately $0.9047548367$.
This show that for all $|x_1| \leq 1/0.9047548367 \approx 1.105271792$ we have $|u_{-8}| \leq 1$.

Combining the two, for all $|x_1| \leq 0.664$ we have $|u_{-7}|, |u_{-8}| \leq 1$ and hence
    there exists a simultaneous expansion of $(x_1, x_1)$.
\end{proof}

\begin{rmk}
Let
\[
\widetilde{\mathcal D}_{\be_1,\be_2}=\left \{x : \exists (a_1,a_2,\dots)\in\{-1,0,1\}^\mathbb N \mid
x=\sum_{n=1}^\infty a_n\be_1^{-n}=\sum_{n=1}^\infty a_n\be_2^{-n}
\right\}.
\]
(So, the difference with $\mathcal D_{\be_1,\be_2}$ is in allowing extra zero digit.) It is shown in
\cite{KP} that $\widetilde{\mathcal D}_{\be_1,\be_2}$ has the cardinality of the continuum for all
$(\be_1,\be_2)\in (1,2)\times(1,2)$.
\end{rmk}

\section{The sets $\OO$ and $\S$}
\label{sec:O S}

We now focus our attention on the pairs $(\be_1,\be_2)$ for which the IFS satisfies the open set condition (OSC) or is
    totally disconnected.

We begin with a simple observation.
Clearly, $T_i(K)\subset K$ for $i\in\{\pm1\}$. Put $K_n=\bigcup_{|w|=n}T_w(K)$;
then $K_{n+1}\subset K_n$, and $\bigcap_{n\ge1} K_n=A$. Hence $A$ is disconnected if and
only if there exists $n$ such that $K_n$ is disconnected. (And therefore, so is $K_k$
for all $k>n$.) This immediately yields the following:

\begin{prop}\label{prop:S-open}
The set $\S$ is open.
\end{prop}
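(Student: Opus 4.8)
The plan is to exploit the criterion just established: $A_{\be_1,\be_2}$ is totally disconnected if and only if there exists $n\ge1$ such that $K_n=K_n(\be_1,\be_2)$ is disconnected, which by the nesting $K_{n+1}\subset K_n$ and the two-piece decomposition $K_n=T_1(K_{n-1})\cup T_{-1}(K_{n-1})$ (together with Hata's dichotomy, connected or totally disconnected) is equivalent to $T_1(K_{n-1})\cap T_{-1}(K_{n-1})=\emptyset$ for some $n$. So it suffices to show that the condition ``$T_1(K_{n-1})\cap T_{-1}(K_{n-1})=\emptyset$ for some $n$'' is an open condition on $(\be_1,\be_2)$, and that its union over $n$ is exactly $\S$.

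First I would fix $(\be_1^0,\be_2^0)\in\S$ and take $n$ with $T_1(K_{n-1}^0)\cap T_{-1}(K_{n-1}^0)=\emptyset$, where the superscript denotes the parameter value. The key observation is that $K_{n-1}$ is a \emph{polytope} depending continuously (indeed, piecewise smoothly) on $(\be_1,\be_2)$: by Theorem~\ref{thm:K vert} its vertices are the finitely many points $T_w(\pi(p^km^\infty))$ and $T_w(\pi(m^kp^\infty))$ with $|w|\le n-1$ and $k$ bounded, and each such vertex is a rational function of $\be_1,\be_2$ (a finite sum of terms $\pm\be_i^{-j}$), hence continuous. Therefore the two compact polytopes $T_1(K_{n-1})$ and $T_{-1}(K_{n-1})$ vary continuously in the Hausdorff metric. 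Since the Hausdorff distance between two disjoint compact sets is positive, and the map $(\be_1,\be_2)\mapsto \mathrm{dist}\bigl(T_1(K_{n-1}),T_{-1}(K_{n-1})\bigr)$ is continuous, it stays positive in a neighbourhood of $(\be_1^0,\be_2^0)$. On that neighbourhood $K_n$ is disconnected, hence $A$ is disconnected, hence (by Hata) totally disconnected, so the neighbourhood lies in $\S$. This shows $\S$ is open.

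The only mild subtlety is making precise that ``$K_n$ disconnected'' follows from ``$T_1(K_{n-1})\cap T_{-1}(K_{n-1})=\emptyset$'': since each $K_w$ is convex (image of the convex $K$ under an affine map) and hence connected, $K_n$ being the union of two disjoint nonempty compact sets $T_1(K_{n-1})$ and $T_{-1}(K_{n-1})$ is manifestly disconnected, and the separating open sets for these two pieces together with $K_n\subset K_{n-1}$ witness disconnectedness of $A$ via the inclusion $A\subset K_n$. Then the converse inclusion $\{\,(\be_1,\be_2): \exists n,\ T_1(K_{n-1})\cap T_{-1}(K_{n-1})=\emptyset\,\}=\S$ is exactly the ``if and only if'' recalled above. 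I expect the continuity-of-vertices step to be the main (though routine) point: one must note that only boundedly many vertices $p^km^\infty$, $m^kp^\infty$ are relevant because $K$ itself has a locally finite vertex set, and that the finitely many defining inequalities of the polytope $K_{n-1}$ — equivalently the edges $P_k$ from the proof of Theorem~\ref{thm:K vert} — depend continuously on the parameters, so that $K_{n-1}(\be_1,\be_2)\to K_{n-1}(\be_1^0,\be_2^0)$ in the Hausdorff metric as $(\be_1,\be_2)\to(\be_1^0,\be_2^0)$.
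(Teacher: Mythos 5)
Your proposal follows the same route the paper takes: reduce openness of $\S$ to the observation that the condition ``$K_n$ is disconnected for some $n$'' is stable under small perturbation of $(\be_1,\be_2)$, via continuity of the maps $T_{\pm1}$ (equivalently, continuity of $K_n$ in the Hausdorff metric). The paper states this in two sentences without elaborating; you fill in the continuity argument in more detail, which is fine.

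One small inaccuracy to flag: Theorem~\ref{thm:K vert} says the vertices of $K$ are $\pi(p^km^\infty)$ and $\pi(m^kp^\infty)$ for \emph{all} $k\ge0$ — a countably infinite set accumulating at the two fixed points $\pi(p^\infty)$, $\pi(m^\infty)$ — so $K$ is not a polytope and ``finitely many vertices'' / ``locally finite vertex set'' is not literally correct. This does not break your argument (each vertex is an explicit rational function of $\be_1,\be_2$, the sequences converge to the fixed points uniformly on compact parameter sets, and the convex hull operation is Hausdorff-continuous, so $K$ and hence $K_n$ do vary continuously); it just means the continuity of $K_n$ should be justified by, say, the continuity of $A$ in $(\be_1,\be_2)$ together with continuity of convex hull, rather than by appealing to a finite vertex set.
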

\begin{proof}Let $(\be_1,\be_2)\in\mathcal S$ and $n$ be such that $K_n$ is disconnected.
By the continuity of $T_{-1}$ and $T_1$, a sufficiently small perturbation of $(\be_1,\be_2)$ leaves
$K_n$ disconnected, whence $A$ is disconnected as well.
\end{proof}

For ease of discussion if $T_{1}(K_n^o)\cap T_{-1}(K_n^o) = \emptyset$ then
    we will say that $T_{1}(K_n)\cap T_{-1}(K_n)$ has trivial intersection.
Let $A$ be the IFS in question, and $K$ the convex hull of $A$.
We immediately see that a sufficient condition for $A$ to satisfy the OSC, or to be totally
    disconnected is if $T_1(K)$ and $T_{-1}(K)$ have trivial or empty intersection.
That is, we have

\begin{lemma}
\label{thm:K}
Let $K$ be the convex hull of $A$.
\begin{itemize}
\item If $T_{1}(K^o)\cap T_{-1}(K^o) = \emptyset$ then
    $A$ satisfies the open set condition.
\item If $T_{1}(K)\cap T_{-1}(K) = \emptyset$ then $A$ is totally disconnected.
\end{itemize}
\end{lemma}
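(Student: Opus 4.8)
The plan is to exploit the nested structure $A=\bigcap_{n\ge1}K_n$ established in Section~2 together with the fact, recalled just before the statement, that $A$ is disconnected if and only if $K_n$ is disconnected for some $n$. The two bullets are really two instances of the same idea applied to $K=K_0$, so I would treat them in parallel, deducing each stronger property from the corresponding separation of the two first-level pieces $T_1(K)$ and $T_{-1}(K)$.

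For the second bullet, suppose $T_1(K)\cap T_{-1}(K)=\emptyset$. Since $A\subset K$ we have $T_1(A)\subset T_1(K)$ and $T_{-1}(A)\subset T_{-1}(K)$, so $T_1(A)\cap T_{-1}(A)=\emptyset$; equivalently $K_1=T_1(K)\cup T_{-1}(K)$ is already disconnected. By the observation preceding the lemma (``$A$ is disconnected if and only if there exists $n$ such that $K_n$ is disconnected'') this forces $A$ itself to be disconnected, and by Hata's dichotomy \cite{Hata} quoted in the introduction, a disconnected attractor of this IFS is totally disconnected. One can also argue directly: because $K$ is compact and convex and $T_{\pm1}$ are homeomorphisms, $T_1(K)$ and $T_{-1}(K)$ are disjoint compact sets, hence at positive distance; covering the two first-level pieces by disjoint open neighbourhoods, refining through the iterated images, and using that $\mathrm{diam}\,T_w(K)\to0$ as $|w|\to\infty$, one separates any two distinct points of $A$ by a clopen-in-$A$ partition, which is exactly the definition of total disconnectedness given in Section~1.

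For the first bullet, suppose $T_1(K^o)\cap T_{-1}(K^o)=\emptyset$. Take $O:=K^o$, which is non-empty since $K$ is the convex hull of a set with more than one point. Then $T_i(O)=T_i(K^o)\subset K^o=O$ because $T_i(K)\subset K$ and $T_i$ is affine and invertible (so it maps interior to interior and $T_i(K)\subset K$ gives $T_i(K^o)\subset K^o$), and by hypothesis $T_1(O)\cap T_{-1}(O)=\emptyset$. These are precisely the two conditions in Definition~\ref{def:OSC}, so the IFS $\{T_{-1},T_1\}$ satisfies the OSC.

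The only genuinely delicate point is the passage from ``$T_1(K)\cap T_{-1}(K)=\emptyset$'' to ``$A$ totally disconnected'' if one wants a self-contained argument rather than citing Hata; there one must be careful that the separation at level~1 propagates to a separation of $A$ into two relatively clopen halves, which is where compactness of $K$ (giving a positive gap between $T_1(K)$ and $T_{-1}(K)$) does the work. Since the excerpt has already stated the Hata dichotomy and the $K_n$ characterization of disconnectedness, I would simply invoke those, making the proof a two-line deduction; I expect no real obstacle beyond bookkeeping.
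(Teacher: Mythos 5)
Your proof is correct and fills in the details of an argument the paper treats as immediate (the paper merely asserts "we immediately see that a sufficient condition for $A$ to satisfy the OSC, or to be totally disconnected is if $T_1(K)$ and $T_{-1}(K)$ have trivial or empty intersection" and does not spell out a proof). The route you take for the totally-disconnected bullet, passing through disconnectedness of $K_1$ and then Hata's dichotomy, is exactly what the paper's preceding paragraph sets up, and your verification of the two OSC conditions for $O=K^o$ is the intended one-line argument. The only slip is in justifying $K^o\neq\emptyset$: being the convex hull of a set with more than one point does not by itself preclude empty interior (a segment would fail). What actually makes $K^o$ non-empty here is the standing assumption $\beta_1\neq\beta_2$, under which $A$ is not collinear; for instance Theorem~\ref{thm:K vert} produces the three non-collinear vertices $\pi(m^\infty)$, $\pi(pm^\infty)$, $\pi(p^\infty)$. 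With that small correction your argument is complete.
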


Here $K^o$ is the interior of $K$.
Although these requirements are sufficient, they are not necessary.
This is because $K$ is a extreme overestimate for the shape of $A$.

In Figure~\ref{fig:S1} we have shown those $(\beta_1, \beta_2)$ such that they
    satisfy the hypothesis of Lemma~\ref{thm:K}.

\begin{figure}
\includegraphics[width=300pt,height=300pt]{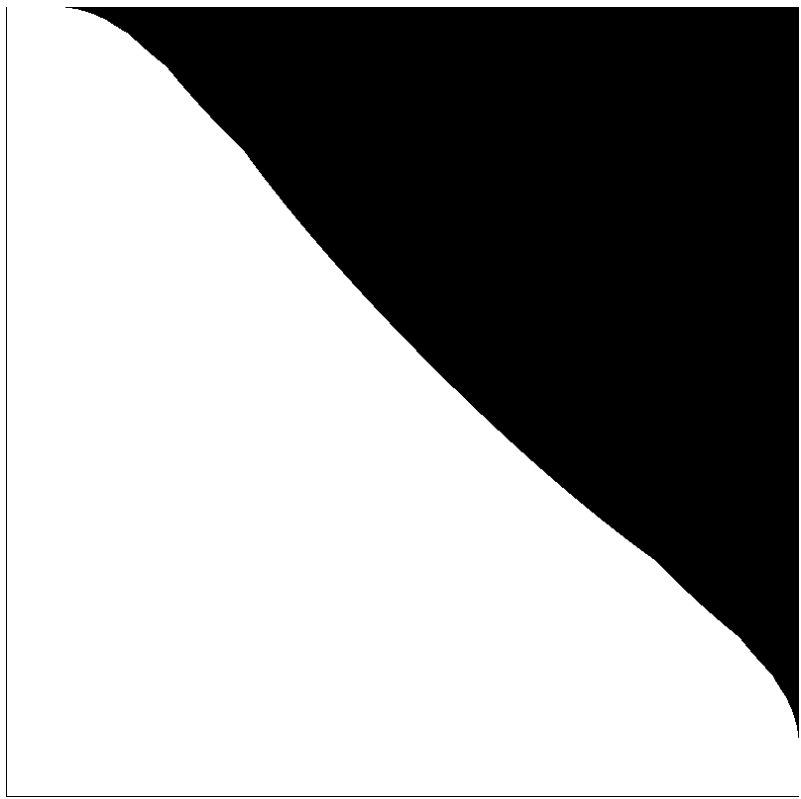}
\caption{Points known to be in $\S$ (black).
         (Level 1 approximation)}
\label{fig:S1}
\end{figure}

This curve is the same curve, after translation of notation, to that found by Solomyak \cite{Sol}
    using somewhat different techniques.
This will be shown in Theorem~ \ref{thm:Solomyak}. A precise description of this
curve is given in Theorem~\ref{thm:S struct}.

The idea of approximating $A$ by a simple set $K$ can be generalized.
Recall that for $w \in \{p, m\}^*$ that $K_w = T_w(K)$ and we define
    $K_n = \bigcup_{|w| = n} K_w$.
A immediate, and profitable, generalization of Lemma~\ref{thm:K} gives

\begin{lemma}
\label{lem:Kn}
Let $K_n$ be as above.
\begin{itemize}
\item If $T_{1}(K_n^o)\cap T_{-1}(K_n^o) = \emptyset$ then
    $A$ satisfies the open set condition.
\item If $T_{1}(K_n)\cap T_{-1}(K_n) = \emptyset$ then $A$ is totally disconnected.
\end{itemize}
\end{lemma}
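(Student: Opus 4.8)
The plan is to follow the same elementary argument that proves Lemma~\ref{thm:K}, but carried out one level deeper in the nested construction $A\subset\dots\subset K_{n+1}\subset K_n\subset\dots\subset K$. Recall that $A=\bigcap_{k\ge1}K_k$ and that $K_{ww'}\subset K_w$ for all finite words $w,w'$; in particular $A=T_1(A)\cup T_{-1}(A)$ and, more to the point, $T_i(A)\subset T_i(K_n)$ for $i\in\{\pm1\}$, since $A\subset K_n$ and $T_i$ is a (bijective) affine contraction. The key observation is that $T_i(K_n)=\bigcup_{|w|=n}T_iT_w(K)=\bigcup_{|iw|=n+1,\ iw \text{ starts with } i}K_{iw}$, so $T_1(K_n)$ and $T_{-1}(K_n)$ are precisely the two ``halves'' of $K_{n+1}$ indexed by words beginning with $p$ and with $m$ respectively.

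For the second bullet: suppose $T_1(K_n)\cap T_{-1}(K_n)=\emptyset$. Since these two compact sets are disjoint, they are separated by a positive distance, so there exist disjoint open sets $O_p\supset T_1(K_n)$ and $O_m\supset T_{-1}(K_n)$. Then $T_1(A)\subset O_p$ and $T_{-1}(A)\subset O_m$, and $A=T_1(A)\cup T_{-1}(A)\subset O_p\cup O_m$, which shows $A$ is disconnected. By the Hata dichotomy cited in the introduction (\cite{Hata}), for this IFS disconnected already implies totally disconnected, so we are done. (Alternatively, one can invoke the remark preceding Proposition~\ref{prop:S-open}: $K_{n+1}=T_1(K_n)\cup T_{-1}(K_n)$ is then a disjoint union of two nonempty compact sets, hence $K_{n+1}$ is disconnected, hence $A=\bigcap_k K_k$ is disconnected, hence totally disconnected.)

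For the first bullet: suppose $T_1(K_n^o)\cap T_{-1}(K_n^o)=\emptyset$. We verify the open set condition (Definition~\ref{def:OSC}) with the witness set $O:=K_n^o$, which is non-empty because $K_n\supset A$ has non-empty interior (indeed $K\supset K_n$ and $K_n$ is a finite union of affine images of the two-dimensional convex body $K$; each $K_w$ has non-empty interior). First, $T_i(O)=T_i(K_n^o)=(T_iK_n)^o$ since $T_i$ is an affine homeomorphism of $\BbR^2$; and $T_iK_n\subset K_n$ because every $K_{iw}$ with $|w|=n$ satisfies $K_{iw}\subset K_{i}\subset\dots$, more simply because $T_i K_n = \bigcup_{|w|=n} K_{iw}$ and each $K_{iw}\subset K_{n}$ as $iw$ has a length-$n$ prefix$\ldots$ wait—here one should instead note $T_iK_n=\bigcup_{|w|=n}T_iT_w K\subset\bigcup_{|w|=n}T_w K=K_n$ is \emph{not} immediate; rather use $T_iK_n\subset T_iK\subset K$, and since $T_iK_n$ is a union of sets $K_{iw_1\dots w_n}$ each of which lies in some $K_{v}$ with $|v|=n$, we get $T_iK_n\subset K_n$. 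Taking interiors, $T_i(O)\subset K_n^o=O$. Second, $T_1(O)\cap T_{-1}(O)=T_1(K_n^o)\cap T_{-1}(K_n^o)=\emptyset$ by hypothesis. Hence the OSC holds.

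The main obstacle, and the only point requiring care, is the bookkeeping in the paragraph above: verifying the inclusion $T_i(K_n)\subset K_n$ (equivalently $T_i(O)\subset O$ for the OSC). This is where one must use $K_{ww'}\subset K_w$ correctly—the cleanest route is the chain $T_iK_n=\bigcup_{|w|=n}K_{iw}$ together with $K_{iw_1\dots w_n}=K_{(iw_1\dots w_{n-1})w_n}\subset K_{iw_1\dots w_{n-1}}\subset\cdots$, but since $iw_1\dots w_n$ has length $n+1$ one instead observes its length-$n$ prefix $iw_1\dots w_{n-1}$ satisfies $K_{iw_1\dots w_n}\subset K_{iw_1\dots w_{n-1}}\subset K_n$, so indeed $T_iK_n\subset K_n$; then apply the affine homeomorphism property of $T_i$ to pass to interiors. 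Everything else—compactness giving positive separation, the Hata dichotomy for disconnectedness—is exactly as in the proof of Lemma~\ref{thm:K}.
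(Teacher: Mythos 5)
Your proof is correct, and it is essentially the argument the paper has in mind: the paper gives no explicit proof of Lemma~\ref{lem:Kn}, presenting it (and the parent Lemma~\ref{thm:K}) as immediate, and what you have written is the natural way of filling in that ``immediate''. Two small remarks on streamlining the exposition. First, the inclusion $T_i(K_n)\subset K_n$ that you agonise over in the OSC part has a one-line proof that avoids the prefix bookkeeping entirely: since $T_i(K_n)=\bigcup_{|w|=n}K_{iw}\subset\bigcup_{|v|=n+1}K_v=K_{n+1}$, and the paper has already established $K_{n+1}\subset K_n$, the inclusion follows; then $T_i$ being an affine homeomorphism gives $T_i(K_n^o)=(T_iK_n)^o\subset K_n^o$, and $K_n^o\neq\emptyset$ because $K$ has nonempty interior (as $\beta_1\neq\beta_2$, $A$ is not contained in a line). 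Second, in the disconnectedness part you should state explicitly that $T_1(A)$ and $T_{-1}(A)$ are nonempty (since $A$ is), so that the separation $A=T_1(A)\cup T_{-1}(A)\subset O_p\cup O_m$ with $O_p\cap O_m=\emptyset$ genuinely exhibits a disconnection rather than a degenerate cover; after that the Hata dichotomy, or equivalently the paper's remark that $A$ is disconnected iff some $K_k$ is, finishes the job exactly as you say.
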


This can of course to be done for any set that contains $A$ as a subset.
An advantage of these $K_n$ is that $K_n \to A_{\be_1, \be_1}$ in the
    Hausdorff metric.

In Figure~\ref{fig:S40} we have given the approximations of $\S$ based on
    $K_{40}$.
We will call an approximation of $\S$ using Lemma~ \ref{lem:Kn} with a
    particular $K_n$, a  {\em level $n$ approximation}.

In Theorem~\ref{thm:K vert} we gave a precise description of the vertices of
    $K$.
We can now determine for which
    $\beta_1, \beta_2$ we satisfy the conditions of Lemma~\ref{thm:K} and, to some extent, \ref{lem:Kn}.

Let $M_k$ be the line connecting $m^k p^\infty$ and $m^{k+1} p^\infty$,
    and similarly $P_k$ for $p^k m^\infty$ and $p^{k+1} m^\infty$.
(See Figure~\ref{fig:K}.)

\begin{lemma}
For each $\beta_1 > \beta_2$ there exists $k$ such that the segment
    $T_1(M_k)$ crosses the $y$-axis.
\end{lemma}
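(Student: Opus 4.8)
The plan is to compute the coordinates of the endpoints of $M_k = [\pi(m^kp^\infty),\pi(m^{k+1}p^\infty)]$ explicitly, apply $T_1$, and show that the $x$-coordinates of $T_1(\pi(m^kp^\infty))$ and $T_1(\pi(m^{k+1}p^\infty))$ have opposite signs for a suitable $k$. Since $T_1$ is affine and maps segments to segments, a sign change of the $x$-coordinate across a segment's endpoints forces the segment to cross the $y$-axis. So the whole statement reduces to an inequality about one real-valued function of $k$.

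First I would record that $s_\la(m^kp^\infty) = -\sum_{i=1}^k\la^i + \sum_{i>k}\la^i = -\frac{\la(1-\la^k)}{1-\la} + \frac{\la^{k+1}}{1-\la} = \frac{-\la + 2\la^{k+1}}{1-\la}$, and that $T_1(x,y) = \bigl(\frac{x+1}{\be_1},\frac{y+1}{\be_2}\bigr) = (\la(x+1),\mu(y+1))$. Hence the $x$-coordinate of $T_1(\pi(m^kp^\infty))$ is $\la\bigl(1 + \frac{-\la+2\la^{k+1}}{1-\la}\bigr) = \la\cdot\frac{1-2\la+2\la^{k+1}}{1-\la}$. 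Writing $g(k) := 1 - 2\la + 2\la^{k+1}$, the claim becomes: for each $\la\in(0,1)$ (equivalently each $\be_1>1$, and the hypothesis $\be_1>\be_2$ is not actually needed for this particular map $T_1$ on $M_k$), there is a $k\ge 0$ with $g(k)$ and $g(k+1)$ of opposite sign (or one of them zero). Since $g(k) = 1-2\la + 2\la^{k+1}$ is strictly monotone in $k$ (decreasing, as $0<\la<1$), with $g(0) = 1 - 2\la + 2\la = 1 > 0$ and $g(k)\to 1-2\la$ as $k\to\infty$, the argument splits according to whether $\la > 1/2$ or $\la\le 1/2$. If $\la > 1/2$, then $g(k)\downarrow 1-2\la < 0$, so by monotonicity there is exactly one index $k_0$ with $g(k_0) > 0 \ge g(k_0+1)$, giving the sign change and hence the crossing. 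If $\la\le 1/2$ (i.e. $\be_1\ge 2$), then $g(k) > 0$ for all $k$, so this crude argument fails — but in that regime $\be_1\ge 2$ and, as noted in the excerpt, $(\be_1,\be_2)\in\S$ is already known for a different reason, so one should first reduce to $\be_1<2$ (equivalently $\la>1/2$), under which the clean sign-change argument applies. I would state the lemma's proof for $\be_1<2$ and invoke the standing assumption $\be_1,\be_2<2$ made just before the statement of $\mathcal O$ and $\mathcal S$.

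The main obstacle is really just making sure the endpoints of $M_k$ are in "general position" with respect to the $y$-axis — i.e. that a sign change of $x$-coordinates genuinely gives a transverse crossing rather than the segment merely touching the axis. This is handled by observing that if $g(k_0+1) = 0$ exactly then the endpoint itself lies on the axis (still a "crossing" in the sense needed), and if $g(k_0+1)<0<g(k_0)$ then by the intermediate value theorem applied to the affine parametrization $t\mapsto x$-coordinate of $(1-t)T_1(\pi(m^{k_0}p^\infty)) + t\,T_1(\pi(m^{k_0+1}p^\infty))$, there is an interior point with $x=0$. So the proof is short: compute $s_\la(m^kp^\infty)$, compute the $x$-coordinate of its image under $T_1$, note this equals $\la g(k)/(1-\la)$ with $g$ strictly decreasing from $g(0)=1>0$ to the limit $1-2\la<0$ (using $\be_1<2$), extract the sign-change index $k$, and conclude by affineness of $T_1$ and the intermediate value theorem.

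Let me also flag the one place to be careful in writing this up: the hypothesis "$\be_1>\be_2$" in the lemma statement is presumably there for consistency with the surrounding discussion (and because it is $T_1(M_k)$ rather than $T_1(P_k)$ that is asserted to cross — the roles of $M_k$ and $P_k$ would swap under $\be_1\leftrightarrow\be_2$), so in the proof I would not lean on it beyond what is needed, and I would make explicit that the only input used is $1<\be_1<2$. If the authors want an analogous statement for $T_{-1}$ or for $P_k$, the computation is symmetric: $s_\la(p^km^\infty) = \frac{\la - 2\la^{k+1}}{1-\la}$ and $T_{-1}(x,y) = (\la(x-1),\mu(y-1))$, giving $x$-coordinate $\la\cdot\frac{-1+2\la-2\la^{k+1}}{1-\la} = -\la g(k)/(1-\la)$, so the same $k$ works.
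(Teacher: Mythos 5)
Your proof is correct and is essentially the paper's argument made explicit: the paper observes that $T_1(\pi(m^0p^\infty))=\pi(p^\infty)$ lies to the right of the $y$-axis, $\pi(pm^\infty)$ lies to the left, and the chained segments $T_1(M_k)$ interpolate between them, while you carry out the same observation by computing the $x$-coordinate $\la(1-2\la+2\la^{k+1})/(1-\la)$ and noting it is positive at $k=0$, strictly decreasing, and has negative limit once $\la>\tfrac12$. You are also right that the operative hypothesis is the standing assumption $\be_1<2$ rather than $\be_1>\be_2$.
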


It should be noted that that this $k$ may not be unique, as it is possible that
    $T_1(m^k p^\infty)$ is on the $y$-axis.
In this case we would say that both $k-1$ and $k$ satisfy this criterion.

\begin{proof}
We see that $\pi(pm^\infty)$ lies to the left of the $y$-axis, and that
    $\pi(p^\infty)$ lies to the right.
This, combined with the fact that the $M_k$ form a decreasing (with respect of the
    $y$-coordinate) sequence of intervals proves the result.
\end{proof}

We will denote this $k := k(\beta_1, \beta_2)$.

\begin{lemma}
\label{lem:SOcomp}
Assume $\beta_1 > \beta_2$ and let $k := k(\beta_1, \beta_2)$.
Then
\begin{itemize}
\item If $T_1(M_k)$ is below the point $(0,0)$ then $T_1(K) \cap T_{-1}(K)=\varnothing$;
\item If $T_1(M_k)$ goes through the point $(0,0)$ then $T_1(K) \cap T_{-1}(K)$ has trivial, but
    non-empty intersection;
\item If $T_1(M_k)$ is above the point $(0,0)$ then $T_1(K) \cap T_{-1}(K)$ has non-trivial and
    non-empty intersection.
\end{itemize}
\end{lemma}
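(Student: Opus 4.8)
The plan is to reduce the question about $T_1(K)\cap T_{-1}(K)$ to a question about a single edge of $\partial K$ and its image under $T_1$, using the symmetry of the construction together with Theorem~\ref{thm:K vert}. First I would record the basic symmetry: the map $(x,y)\mapsto(-x,-y)$ conjugates $T_1$ to $T_{-1}$, so $T_{-1}(K)=-T_1(K)$, and hence $T_1(K)\cap T_{-1}(K)$ is a centrally symmetric convex set; in particular it is non-empty iff $0\in T_1(K)+(-T_{-1}(K))/\dots$ — more usefully, since both $T_1(K)$ and $T_{-1}(K)$ are convex with $T_{-1}(K)$ the point reflection of $T_1(K)$ through the origin, one has $T_1(K)\cap T_{-1}(K)\neq\varnothing$ if and only if the origin $(0,0)$ lies in the set $\tfrac12\bigl(T_1(K)-T_1(K)\bigr)$, equivalently if and only if $T_1(K)$ meets its own reflection. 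The cleanest route, and the one I would actually carry out, is: $T_1(K)$ lies in the right half–plane except that it may poke across the $y$–axis; the part of $T_1(K)$ to the left of the $y$–axis is exactly the reflection of the part of $T_{-1}(K)$ to the right of it. So $T_1(K)\cap T_{-1}(K)\neq\varnothing$ iff $T_1(K)$ crosses the $y$–axis at a point that also lies in $-T_1(K)$, and the whole problem localizes to how $T_1(\partial K)$ sits relative to the $y$–axis near the crossing.

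Next I would use Theorem~\ref{thm:K vert}: the boundary of $K$ consists (on the relevant side) of the polygonal arc through the vertices $\pi(m^kp^\infty)$, so the ``left'' boundary of $T_1(K)$ — the part facing the $y$–axis from inside the right half-plane — is made up of the segments $T_1(M_k)$. By the preceding lemma there is a (essentially unique) index $k=k(\be_1,\be_2)$ for which $T_1(M_k)$ actually crosses the $y$–axis; all segments $T_1(M_j)$ with $j<k$ stay strictly to the left and with $j>k$ stay strictly to the right (using that the $M_k$ form a monotone decreasing family of chords, as noted in the proof of the previous lemma, and that $T_1$ preserves that monotonicity since $\be_1,\be_2>1$). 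Now $0\in T_1(K)$ iff $(0,0)$ lies on or to the right of this crossing edge, i.e. on or above $T_1(M_k)$ in the appropriate sense. Combining this with the symmetry observation: $T_1(K)$ and $T_{-1}(K)=-T_1(K)$ are each other's reflections in the origin, so they intersect iff $0\in T_1(K)$; and the intersection has empty interior precisely when $0\in\partial T_1(K)$, which (because the crossing happens along the straight edge $T_1(M_k)$) is exactly the case that $T_1(M_k)$ passes through $(0,0)$. This yields the three alternatives:
\begin{itemize}
\item if $T_1(M_k)$ passes strictly below $(0,0)$, then $(0,0)\notin T_1(K)$, so $T_1(K)\cap T_{-1}(K)=\varnothing$;
\item if $T_1(M_k)$ passes through $(0,0)$, then $(0,0)\in\partial T_1(K)\cap\partial T_{-1}(K)$ and the intersection is contained in that single boundary point (hence trivial interior) yet non-empty;
\item if $T_1(M_k)$ passes strictly above $(0,0)$, then $(0,0)\in T_1(K)^o$, so $0\in T_1(K)^o\cap T_{-1}(K)^o$ and the intersection has non-empty interior.
\end{itemize}

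The main obstacle I anticipate is making precise the claim that the \emph{only} way $T_1(K)$ and $T_{-1}(K)$ can meet is across that one edge through the origin — i.e. ruling out a ``sideways'' overlap that does not involve the $y$-axis at all. This is where convexity plus the central symmetry is essential: because $T_{-1}(K)=-T_1(K)$, any common point $(u,v)$ gives a common point $(-u,-v)$, and convexity of $T_1(K)$ then forces the segment between them, through the origin, to lie in $T_1(K)$; so $0\in T_1(K)$, and one is back to analysing where $0$ sits relative to the left boundary edges $T_1(M_j)$. The remaining bookkeeping — that among those edges it is precisely $T_1(M_{k(\be_1,\be_2)})$ whose position relative to $(0,0)$ is decisive, and that the trichotomy below/through/above translates into empty/trivial/non-trivial — is then a direct consequence of the monotonicity of the family $\{M_k\}$ established in the previous lemma and the fact that $T_1$, being an orientation-preserving affine contraction with positive linear part, preserves that monotone order. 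I would also note in passing that ``$T_1(M_k)$ below $(0,0)$'' should be read in the sense of the lemma preceding it (the $y$-coordinate of the point of $T_1(M_k)$ on the $y$-axis, or of the relevant endpoint when $k$ is non-unique), so that the three cases are genuinely exhaustive and mutually exclusive.
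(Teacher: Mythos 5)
Your proof is correct and fills out the same symmetry argument the paper only gestures at -- the paper's proof of this lemma is merely a one-sentence appeal to the symmetry of $T_1(K)$ and $T_{-1}(K)$ together with a pointer to Figure~\ref{fig:T1(K)}, whereas you have supplied the actual reasoning ($T_{-1}(K)=-T_1(K)$, so by convexity and central symmetry the two images intersect iff $(0,0)\in T_1(K)$, and the trichotomy is then read off from the position of the unique edge $T_1(M_k)$ of the origin-facing polygonal arc that crosses the $y$-axis, with the supporting line through that edge handling the boundary case). The only slip is in the abandoned Minkowski-difference aside: $\tfrac12\bigl(T_1(K)-T_1(K)\bigr)$ always contains the origin (take $a=b$), so that characterization is vacuous and you wanted $\tfrac12\bigl(T_1(K)+T_1(K)\bigr)=T_1(K)$ instead; but since you explicitly discard that route and carry out the convexity-plus-central-symmetry argument correctly, this does not affect the proof.
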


We see that the first case gives a sufficient condition for $(\beta_1, \beta_2) \in \S$. Also,
the first case combined with the second one gives criteria for when $(\beta_1, \beta_2) \in \OO$.
Unfortunately the final case does not yield anything useful about $(\beta_1, \beta_2)$ --
it only indicates that the level of approximation we are using is insufficient to come to a conclusion.

\begin{proof}
This follows from the symmetry of $T_1(K)$ and $T_{-1}(K)$ and the fact that
    $\beta_1 > \beta_2$.
See for example Figure~\ref{fig:T1(K)}.
\end{proof}

\begin{figure}
\includegraphics[width=100pt,height=100pt]{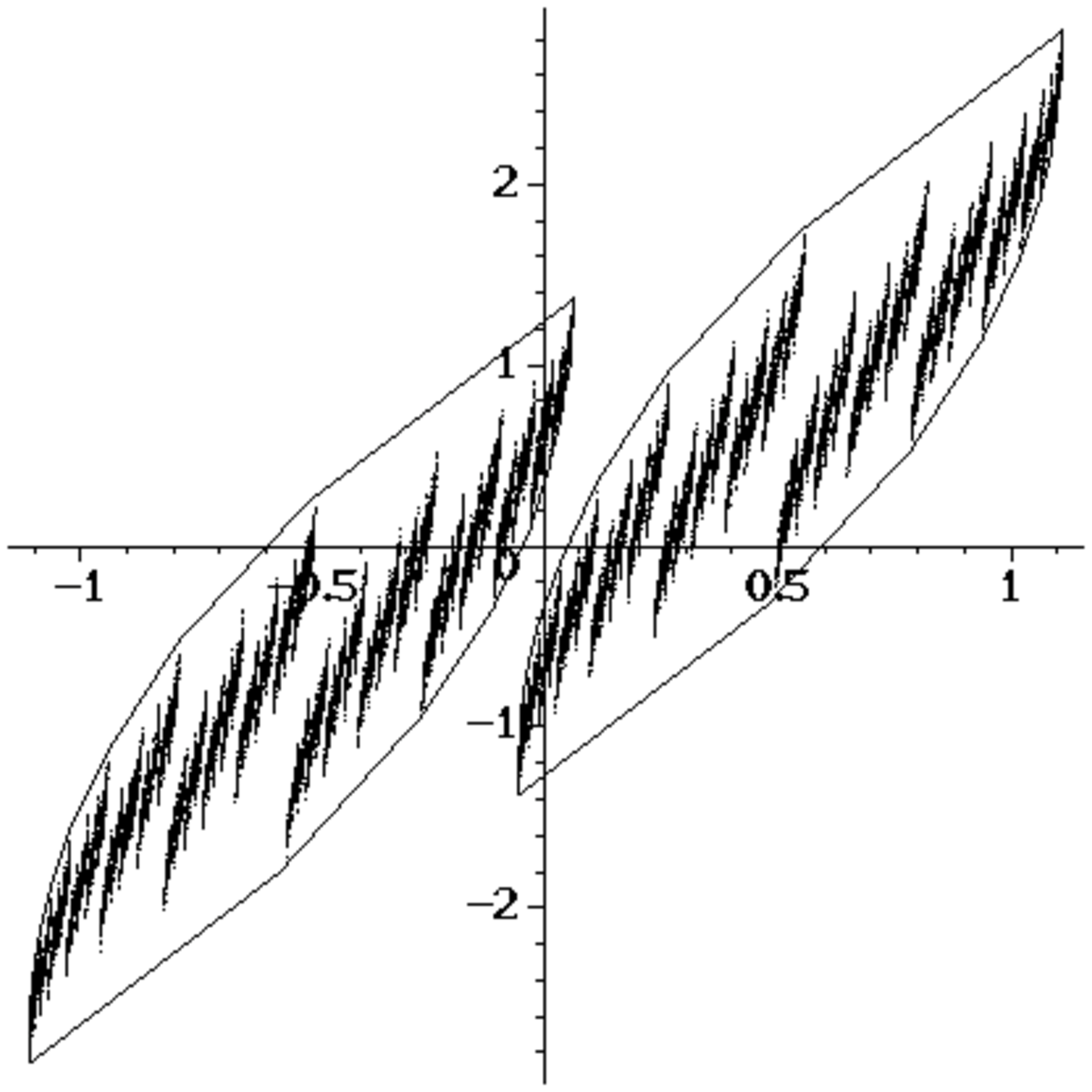}
\includegraphics[width=100pt,height=100pt]{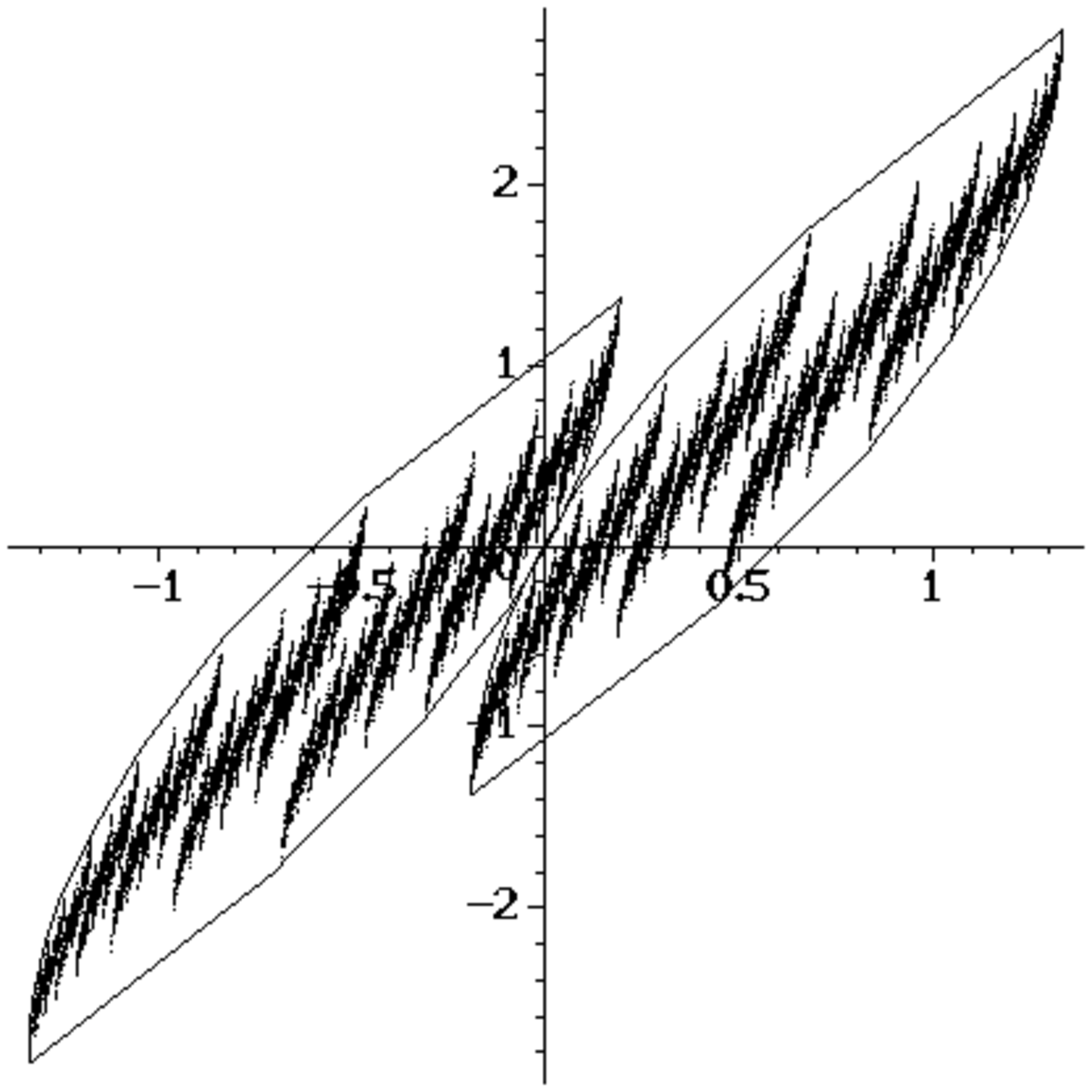}
\includegraphics[width=100pt,height=100pt]{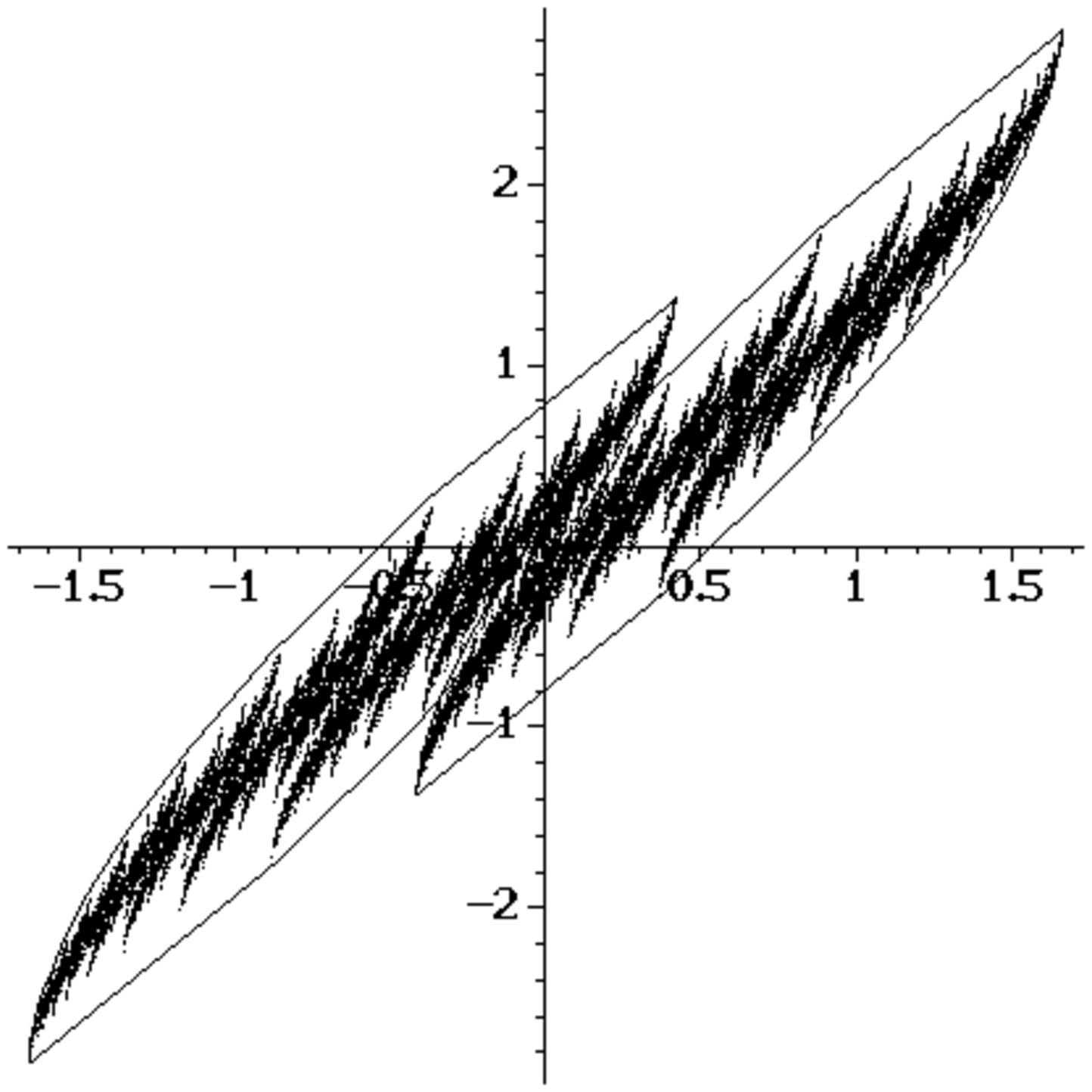}
\caption{Level 1 approximation for $\beta_1 \approx 1.9, 1.75$ and $1.6$ with $\beta_2 = 1.35$.}
\label{fig:T1(K)}
\end{figure}

Using this, we can now give criteria for a point $(\beta_1, \beta_2)$ to be in a level~1 approximation.

Define
\[S_1 = \{(\beta_1, \beta_2) \mid T_1(K) \cap T_{-1}(K)\
    \text{has trivial but non-empty intersection}\}.
\]

\begin{thm}
\label{thm:S struct}
Let $P_k(x) = x^{k+1} - 2 x^k + 2$.
Let $(\beta_1^{(k)}, \beta_2^{(k)})$ be the two roots of $P_k$
    between $1$ and $2$, with $\beta_1^{(k)} < \beta_2^{(k)}$.
Then
\begin{enumerate}
\item[(i)] For $k \geq 4$ we have
      $(\beta_1^{(k)}, \beta_2^{(k)}), (\beta_2^{(k)}, \beta_1^{(k)}) \in S_1$.
\label{en:a}
\item[(ii)] For $k \geq 4$, let $\beta_1^{(k)} \leq \beta_1 \leq \beta_1^{(k+1)}$ and
                          $\beta_2^{(k)} \leq \beta_2 \leq \beta_2^{(k+1)}$
      satisfy
        \begin{equation}
            P_k(\beta_1) P_{k+1}(\beta_2) - P_{k+1}(\beta_1) P_{k}(\beta_2) = 0.
            \label{eq:inside}
        \end{equation}
      Then $(\beta_1, \beta_2), (\beta_2, \beta_1) \in S_1$.
\label{en:b}
\item[(iii)] Let $\beta_1^{(4)} \leq \beta_1 < \beta_2 \leq \beta_2^{(4)}$
      satisfy
        \begin{equation}
            P_3(\beta_1) P_{4}(\beta_2) - P_{4}(\beta_1) P_{3}(\beta_2) = 0.
            \label{eq:inside4}
        \end{equation}
      Then $(\beta_1, \beta_2), (\beta_2, \beta_1) \in S_1$.
\label{en:c}
\item[(iv)] We have $\beta_2^{(k)}\to 1,\ \beta_1^{(k)}\to2$ as $k\to+\infty$.
\label{en:d}
\end{enumerate}
\end{thm}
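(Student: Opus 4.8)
The statement is invariant under $\beta_1\leftrightarrow\beta_2$, so I will assume $\beta_1>\beta_2$ and run everything through Lemma~\ref{lem:SOcomp}. Writing $k=k(\beta_1,\beta_2)$ for the crossing index, that lemma says $(\beta_1,\beta_2)\in S_1$ exactly when the segment $T_1(M_k)$ passes through $(0,0)$; since $T_1$ is affine and invertible with $T_1^{-1}(0,0)=(-1,-1)$, this is the same as requiring $(-1,-1)$ to lie on the edge $M_k=[\pi(m^kp^\infty),\pi(m^{k+1}p^\infty)]$ of the convex hull $K$ (these are genuine edges by Theorem~\ref{thm:K vert}). To make the condition explicit I would first record coordinates: summing two geometric series gives $\pi(m^jp^\infty)=\bigl(\tfrac{2\lambda^{j+1}-\lambda}{1-\lambda},\tfrac{2\mu^{j+1}-\mu}{1-\mu}\bigr)$, and, using $1-2\lambda+2\lambda^{j+1}=\lambda^{j+1}P_j(\beta_1)$ and the analogous identity for $\mu$, the translated vertex $v_j:=\pi(m^jp^\infty)+(1,1)$ equals $\bigl(\tfrac{P_j(\beta_1)}{\beta_1^{\,j}(\beta_1-1)},\tfrac{P_j(\beta_2)}{\beta_2^{\,j}(\beta_2-1)}\bigr)$. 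Thus $(-1,-1)\in M_k$ is equivalent to $0$ lying on the segment $[v_k,v_{k+1}]$.

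For (i), take $\beta_1,\beta_2$ to be the two roots of $P_k$ in $(1,2)$, which exist precisely for $k\ge4$ (a routine check: $P_k$ is positive at $1$ and at $2$, has a unique critical point in $(1,2)$ at $2-\tfrac2{k+1}$, and the value there is $\tfrac5{16}>0$ for $k=3$ and negative for every $k\ge4$). Then $P_k(\beta_1)=P_k(\beta_2)=0$, so $v_k=0$, i.e.\ $\pi(m^kp^\infty)=(-1,-1)$ is itself a vertex of $K$, and hence $(0,0)$ is a vertex of $T_1(K)$. Using the symmetry $T_{-1}(K)=-T_1(K)$ together with the convexity of $K$: any supporting line of $T_1(K)$ at $(0,0)$ has $(T_1K)^o$ on one open side and $(T_{-1}K)^o=-(T_1K)^o$ on the other, so $T_1(K)\cap T_{-1}(K)$ is non-empty with disjoint interiors, i.e.\ $(\beta_1,\beta_2)\in S_1$; the swapped pair follows by symmetry. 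One should also record that in this situation $k$ is a legitimate value of $k(\beta_1,\beta_2)$ — it is the degenerate ``endpoint on the axis'' case flagged in the remark after Lemma~\ref{lem:SOcomp}.

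For (ii) and (iii), a point of $S_1$ with crossing index $k$ requires $(-1,-1)$ on the \emph{open} edge $M_k$, hence the three points $(-1,-1)$, $\pi(m^kp^\infty)$, $\pi(m^{k+1}p^\infty)$ to be collinear, i.e.\ $\det(v_k,v_{k+1})=0$. Substituting the formula for $v_j$, dividing out the nonzero factor $[\beta_1^{\,k+1}\beta_2^{\,k+1}(\beta_1-1)(\beta_2-1)]^{-1}$, and simplifying with the identity $xP_j(x)=P_{j+1}(x)+2(x-1)$, this should collapse to the polynomial relation displayed in part (ii) (I would verify this algebra carefully). The remaining task is to show that, along the branch of this curve lying inside the relevant box $[\beta_1^{(k)},\beta_1^{(k+1)}]\times[\beta_2^{(k)},\beta_2^{(k+1)}]$, the crossing index is indeed $k$ and $(-1,-1)$ actually lies on the \emph{closed} edge $M_k$ rather than on its extension; this is a continuity/monotonicity argument using that the curve runs from the corner $(\beta_1^{(k)},\beta_2^{(k)})$ (where $v_k=0$) to the corner $(\beta_1^{(k+1)},\beta_2^{(k+1)})$ (where $v_{k+1}=0$), and that the chain of edges $\{M_j\}$ is monotone in the $y$-coordinate, as established inside the proof of Theorem~\ref{thm:K vert}. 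Part (iii) is the same computation with the edge $M_3$; the only difference is that the $k=3$ end of the relevant box is not a corner $(\beta_i^{(3)},\cdot)$ — since $P_3$ has no root in $(1,2)$ — but is instead cut off by the diagonal, which is why the admissible range there is $\beta_1^{(4)}\le\beta_1<\beta_2\le\beta_2^{(4)}$.

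Part (iv) is an elementary asymptotic analysis: the two roots of $P_k$ in $(1,2)$ are the solutions of $x^k(2-x)=2$; writing the smaller root as $1+t_k$ gives $(1+t_k)^k(1-t_k)=2$, forcing $t_k\sim(\log 2)/k\to0$, while writing the larger root as $2-s_k$ gives $(2-s_k)^k s_k=2$, forcing $s_k\to0$; so one root tends to $1$ and the other to $2$. The step I expect to be the real work is the index bookkeeping in (ii)--(iii) — verifying that the relevant edge stays $M_k$ as $(\beta_1,\beta_2)$ traverses the curve, that $(-1,-1)$ remains on the closed edge throughout, and that the degenerate endpoints (where $(-1,-1)$ is a vertex) match up the consecutive pieces correctly. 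The coordinate computations, the collinearity determinant, the algebraic reduction, and the asymptotics in (iv) are all routine.
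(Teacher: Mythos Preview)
Your proposal is correct and follows essentially the same route as the paper: reduce via Lemma~\ref{lem:SOcomp} to the question of when $(0,0)$ lies on the edge $T_1(M_k)$ (equivalently, when $(-1,-1)$ lies on $M_k$), observe that the vertex case gives $P_k(\beta_1)=P_k(\beta_2)=0$, and that the interior-of-edge case is the vanishing of the $2\times2$ determinant, which yields the displayed polynomial relation. The only cosmetic differences are that the paper works directly with $(x_k,y_k)=T_1(\pi(m^kp^\infty))$ rather than your translated $v_j$, and for (iv) the paper substitutes $t=x^{-1}$ and argues monotonicity of the roots of $t^{k+1}=t-\tfrac12$, whereas you expand near $1$ and near $2$; both are routine.
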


\begin{figure}
\includegraphics[width=300pt,height=300pt]{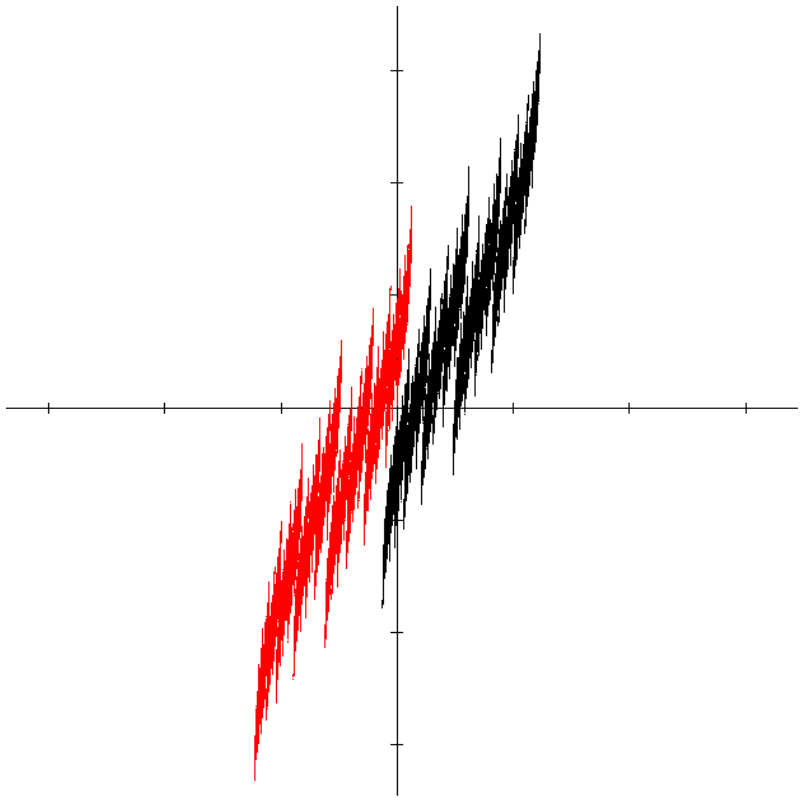}
\caption{``Just touching'': we have $T_{-1}(A)\cap T_1(A)=\{(0,0)\}$ for
$\beta_1\approx1.81618, \beta_2\approx1.30022$ being roots of $x^5-2x^4+2$.
Furthermore, here $T_{-1}(K)\cap T_1(K)=\{(0,0)\}$ as well.}
\label{fig:touching}
\end{figure}

\begin{proof}
(i)
Assume that $T_{1}(K) \cap T_{-1}(K)$ has trivial but non-empty intersection.
This implies that one of the edges or corners of $T_1(K)$ contains $(0,0)$.
Assume first that $(0,0)$ is a corner;
then we have that $T_1(\pi(m^k p^\infty)) = (0,0)$.
This implies
    \[
    \beta_1^{k+1} - 2 \beta_1^k + 2 =
       \beta_2^{k+1} - 2 \beta_2^k + 2 =  0,
    \]
which corresponds to the point $(\beta_1^{(k)}, \beta_2^{(k)})$.
It is worth observing that the above equation has no solutions for $k \leq 3$.
This resulting in the interesting consequence that the first, second, third and fourth level approximations
    are all the same.

(ii)
Next assume that, instead of a corner, it is a line that goes through $(0,0)$.
We see that the line $T_1(M_k)$ will intersect the point $(0,0)$ if the line
    from $T_1(\pi(m^k p^\infty))$ to $T_1(\pi(m^{k+1} p^\infty))$ goes through $(0,0)$.
Letting $(x_k, y_k) = T_1(\pi(m^k p^\infty))$ and
	$(x_{k+1}, y_{k+1}) = T_1(\pi(m^{k+1} p^\infty))$,
    we see that the $y$-intercept of the line through these points is
    \[ \frac{x_k y_{k+1} - y_k x_{k+1} }{x_{k+1}- x_k}. \]
This will equal zero when
\[ 0 = x_k y_{k+1} - y_k x_{k+1}. \]
Evaluating the above equation at $\beta_1$ and $\beta_2$ gives equation \eqref{eq:inside}.
It is worth observing that the line segment between $(x_k, y_k)$ and $(x_{k+1}, y_{k+1})$
    will only cross the $y$-axis if these two points are on the opposite sides of the axis.
This implies that $\beta_1^{(k)} < \beta_1 < \beta_1^{(k+1)}$ and
                  $\beta_2^{(k)} < \beta_2 < \beta_2^{(k+1)}$.

(iii)
Similar to (ii).

(iv)
Finally, the equation $x^k=2(x^{k-1}-1)$ becomes $t^k=t-\frac12$ for $t=x^{-1}$. It is clear
from the graphs of the left and right hand sides that the sequence of smaller real roots, $\rho_k$,
is decreasing, while the sequence of larger real roots, $\rho_k'$, is increasing. Therefore,
$\rho_k^k\to0$, whence $\rho_k\to\frac12$, which is equivalent to $\beta_1^{(k)}\to2$ as $k\to+\infty$.
On the other hand, $\rho_k'\to1$, since it is always smaller than 1 and cannot tend to $\kappa<1$, since
in that case $\kappa$ must be equal to $\frac12$ as well, which is impossible. Hence
$\beta_2^{(k)}\to1$.
\end{proof}

Figure~\ref{fig:touching} illustrates the above theorem for $\beta_i=\beta_i^{(4)}, i=1,2$.

\begin{proof}[Proof of Corollary~\ref{cor:3.129}]
Consider the curves
    $P_{k}(\beta_1)P_{k+1}(3-\beta_1+t) - P_{k+1}(\beta_1)P_{k}(3-\beta_1+t)=0$.
Solving for the local maxima of these (with respect to $t$),
    we see that the local maximum for $k = 4$ is maximal, and obtains a
    value of
    \[ t       = 0.1294734398566760176850196318981206812538310097982 \dots \]
when
    \[ \beta_1 = 1.2356028604456261036844313175875156433117845240595 \dots\]
Precise algebraic quantities can be given in terms of the roots of a degree~$36$ polynomial, which we omit.

It was shown in \cite[Theorem 2.3]{Sol} that all neighbourhoods
    of $(\beta_1^{(k)},\beta_2^{(k)})$ contain a point that is
    not in $\S$.
Taking $k = 5$ proves the second inequality.
\end{proof}

It is worth observing that B.~Solomyak \cite{Sol} came at this through a different construction.
Solomyak first considered the function
\begin{equation}
h_k^{(t)} = 1 - x - \dots - x^{k-1} + t x^k + x^{k+1} + x^{k+2} + \dots.
\end{equation}
Following \cite{Sol}, put
\[
\mathcal B_{[-1,1]}=\left\{1+\sum_{n=1}^\infty a_nz^n\mid a_n\in[-1,1]\right\}.
\]
For $f\in\mathcal B_{[-1,1]}$ let $\xi_1(f)\le \xi_2(f)\le\dots$ denote the positive
zeroes of $f$ ordered by magnitude and counted with multiplicity. Let
\[
\phi:\gamma\to\min\{\xi_2(f) : f\in\mathcal B_{[-1,1]}, \ f(\gamma)=0\}.
\]
By \cite[Proposition~2.2]{Sol}, the function $\phi$ is well defined. Furthermore,
let $\alpha_2\approx 0.649138$ be the positive zero of $2x^5-8x^2+11x-4$. By the same Proposition,
for all $\gamma \in [1/2, \alpha_2]$ there exists a unique function $h_k^{(t)}$ such that
    $h_k^{(t)}(\gamma) = h_k^{(t)}(\phi(\gamma)) = 0$.
If $\gamma < \lambda < \phi(\gamma)$, then $(1/\gamma, 1/\lambda) \in \S$.

\begin{thm}
\label{thm:Solomyak}
The curve given by $(\gamma, \phi(\gamma))$ is the same as the level-1 approximation of $\S$
    given by Theorem~\ref{thm:S struct}.
\end{thm}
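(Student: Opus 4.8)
The plan is to show that, after the substitution $\la = 1/\be_1$, $\mu = 1/\be_2$, both curves are the same explicit family of algebraic arcs. Write $Q_k(x) = 1 - 2x + 2x^{k+1}$, so that $P_k(\be) = \be^{k+1}Q_k(1/\be)$; the roots of $P_k$ in $(1,2)$ are the reciprocals of the roots of $Q_k$ in $(1/2,1)$. First I would re-express the description of $S_1$ from Lemma~\ref{lem:SOcomp} and Theorem~\ref{thm:S struct} in these coordinates. A one-line geometric-series computation shows that $T_1(\pi(m^kp^\infty)) = \pi(pm^kp^\infty)$ has $x$-coordinate $\frac{1}{\be_1-1}Q_k(\la)$ and $y$-coordinate $\frac{1}{\be_2-1}Q_k(\mu)$ --- positive multiples of $Q_k(\la)$, $Q_k(\mu)$ with the multiplier independent of $k$. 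Consequently the corner condition ``$T_1(\pi(m^kp^\infty)) = (0,0)$'' reads $Q_k(\la) = Q_k(\mu) = 0$, and the edge condition ``$T_1(M_k)$ passes through the origin'' reads $Q_k(\la)Q_{k+1}(\mu) - Q_{k+1}(\la)Q_k(\mu) = 0$; translating back via $P_k(\be) = \be^{k+1}Q_k(1/\be)$ recovers the description in Theorem~\ref{thm:S struct}. So $S_1$ is, in the $(\la,\mu)$-plane, the union over $k$ of these corner points and edge arcs, with $k(\be_1,\be_2)$ picking out the active one.

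The decisive step on Solomyak's side is the identity obtained by multiplying the power series defining $h_k^{(t)}$ by $1-x$,
\[
(1-x)\,h_k^{(t)}(x) = (1-s)\,Q_k(x) + s\,Q_{k-1}(x), \qquad s = \tfrac{1+t}{2} \in [0,1].
\]
Thus $h_k^{(1)} = Q_{k-1}/(1-x)$ and $h_k^{(-1)} = Q_k/(1-x)$, and for $\ga$ with $\la := \phi(\ga)$ the pair of conditions $h_k^{(t)}(\ga) = h_k^{(t)}(\phi(\ga)) = 0$ becomes $(1-s)Q_k(\ga)+sQ_{k-1}(\ga)=0$ and $(1-s)Q_k(\la)+sQ_{k-1}(\la)=0$ with a single common $s$; eliminating $s$ gives
\[
Q_k(\ga)\,Q_{k-1}(\phi(\ga)) - Q_{k-1}(\ga)\,Q_k(\phi(\ga)) = 0 .
\]
This is exactly the edge-arc equation above with $k$ lowered by one (Solomyak's index $k$ on $h_k^{(t)}$ thus corresponds to our $M_{k-1}$), while $t=\pm1$ degenerates to the corner points $(\be_1^{(k-1)},\be_2^{(k-1)})$ and $(\be_1^{(k)},\be_2^{(k)})$ of Theorem~\ref{thm:S struct}(i). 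Hence $\ga\mapsto(\ga,\phi(\ga))$ is swept out by the same corners and edge arcs that make up $S_1$.

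It then remains to match the two descriptions \emph{on the nose}, which is bookkeeping: (a) confirm the index shift $k_{\mathrm{Sol}} = k_{S_1}+1$ holds uniformly along the curve; (b) check that the $k$ that Solomyak's extremal characterization forces on a given $\ga\in[1/2,\al_2]$ --- namely, that the minimiser of $\xi_2$ over $\mathcal B_{[-1,1]}$ with a zero at $\ga$ is genuinely some $h_k^{(t)}$, which is \cite[Proposition~2.2]{Sol} --- is the one for which $\ga$ lies between the two neighbouring corner reciprocals, i.e.\ is $k(\be_1,\be_2)$, using the monotone march of corner points towards $(2,1)$ from Theorem~\ref{thm:S struct}(iv); and (c) match endpoints: $\ga=1/2$ is the limiting corner $\be_1^{(k)}\to2$ as $k\to\infty$, and $\ga=\al_2$, the positive root of $2x^5-8x^2+11x-4$, is the far end of $S_1$, coming from the exceptional low-index arc of Theorem~\ref{thm:S struct}(iii); verifying that this root is where that arc terminates is a finite (resultant) computation.

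I expect the only real obstacle to be (b): the algebraic identity reduces everything to the equality of two \emph{families} of arcs, but equality of the curves themselves additionally requires knowing that Solomyak's purely function-theoretic optimisation --- smallest possible second positive zero inside $\mathcal B_{[-1,1]}$ --- picks out, for each $\ga$, the same index $k$ and weight $t$ that the convex-hull geometry picks out through $k(\be_1,\be_2)$ and the position of $T_1(M_k)$ relative to the origin. With \cite[Proposition~2.2]{Sol} supplying the shape of the extremiser, and both constructions depending monotonically on a single parameter and governed by the same determinant relations, matching the two endpoints in (c) should propagate a matching along the whole curve and close the proof.
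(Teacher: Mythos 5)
Your proposal is correct and follows essentially the same route as the paper: both arguments show that the edge-touching condition defining $S_1$ is equivalent to the existence of a common $t\in[-1,1]$ for which $h_{k+1}^{(t)}(1/\be_1)=h_{k+1}^{(t)}(1/\be_2)=0$. Your packaging via the identity $(1-x)\,h_k^{(t)}(x)=(1-s)\,Q_k(x)+s\,Q_{k-1}(x)$ with $s=(1+t)/2$ is precisely the paper's introduction of $t=2x_{k+1}/(x_{k+1}-x_k)-1$ seen from the other direction (and is arguably cleaner); your steps (b)--(c) about matching the index and the endpoints are more caution than the paper exercises, since once the algebraic equivalence of the two families of arcs is established pointwise for each $k$, the curves coincide automatically.
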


\begin{proof}
We note a few things.
\begin{itemize}
\item If $t = -1$ then $h_k^{(t)}(1/\beta) = 0$ if and only if $P_{k-1}(\beta) =0$.
\item If $t = 1$ then $h_k^{(t)}(1/\beta) = 0$ if and only if $P_{k}(\beta) =0$.
\end{itemize}
Hence the corners of this curve are the same as the corners of the curve~$\mathcal S$.

Let $x_k = s_\mu(p m^k p^\infty)$ and
and $y_k = s_\la(p m^k p^\infty)$. 
We showed that if $T_1(K)$, the first level convex approximation of
    $A$ ``just touches'' $T_{-1}(K)$ then
\begin{equation}
\label{eq:touch}
x_{k+1} y_k - y_{k+1} x_k = 0.
\end{equation}
Furthermore, $x_k$ will be on one side of the axis, and $x_{k+1}$ will be
    on the other.
Let
\begin{equation}
\label{eq:t}
t = 2 \cdot \frac{x_{k+1}}{x_{k+1} - x_{k}} -1.
\end{equation}
We see that if $x_k = 0$ (i.e. the corner of $K$, $(x_k, y_k) = (0,0)$)
    then $t = -1$.
Furthermore, if $x_{k+1} = 0$ then $t = 1$.
Hence $t$ ranges between $-1$ and $1$.
This implies that
\begin{equation}
\label{eq:t2}
\frac{t+1}{2} x_{k} = \frac{t-1}{2} x_{k+1}.
\end{equation}
Using this in equation \eqref{eq:touch} gives
\begin{eqnarray*}
0 & = & x_{k+1} y_k - y_{k+1} x_k \\
  & = & \frac{t+1}{2} x_{k+1} y_k -\frac{t+1}{2}  y_{k+1} x_k \\
  & = & \frac{t+1}{2} x_{k+1} y_k -\frac{t-1}{2} x_{k+1} y_{k+1} \\
  & = & \frac{t+1}{2} y_k -\frac{t-1}{2} y_{k+1}.
\end{eqnarray*}
It is worth noting that the values when $t+1 = 0$ and $x_n = 0$ are when
    the vertices of $K$ touch $(0,0)$,
   and hence not actually attained when it is the interior of the
    edge that meets $(0,0)$.
Hence the division and multiplication of $0$ is not a problem.
We notice that the equation $\frac{t+1}{2} y_k -\frac{t-1}{2} y_{k+1}$
    equals zero if
\begin{eqnarray*}
0 & = &  1/\beta_2 - 1/\be_2^2 - \dots - 1/\be_2^{k+1} + t /\be_2^{k+2}
          + 1/\be_2^{k+3} + 1/\be_2^{k+4} +  \dots  \\
  & = & h_{k+1}^{(t)}(1/\be_2)
\end{eqnarray*}
A similar argument shows that $h_{k+1}^{(t)}(1/\be_1) = 0$, as required.
\end{proof}

Consider a finite word $w \in \{p, m\}^n$.
Recall that $K_w = T_{w}(K)$.
By our previous notation, $K_n = \bigcup_{|w|=n} K_w$.

To check if $T_1(K_n) \cap T_{-1}(K_n)$ has empty, or trivial intersection, it suffices to
    check $T_1(K_w) \cap T_{-1}(K_{w'})$ for all words $w, w' \in \{p, m\}^n$.
To improve the efficiency of this search, we observe that
    if $T_1(K_w) \cap T_{-1}(K_{w'})$ is empty or trivial, then for all
    words $w_0, w_0'$ we have that
   $T_1(K_{w w_0}) \cap T_{-1}(K_{w' w_0'})$ is empty or trivial.

This allows us to improve the efficiency of the search.

We again remark that the level~1 approximation (using $K_1$) is the same as that found
    in \cite{Sol}.
In fact, this is the same for levels~$2, 3$ and $4$ as well.
At level~$5$ additional points are discovered to be in $\S$ that were not provable before.
(See Figure~\ref{fig:S5}.)
We could, if necessary, construct curves much like Theorem~\ref{thm:S struct}.
This trend continues as we increase to higher level approximations.
(See Figure~\ref{fig:S40}.)

\begin{figure}
\includegraphics[width=300pt,height=300pt]{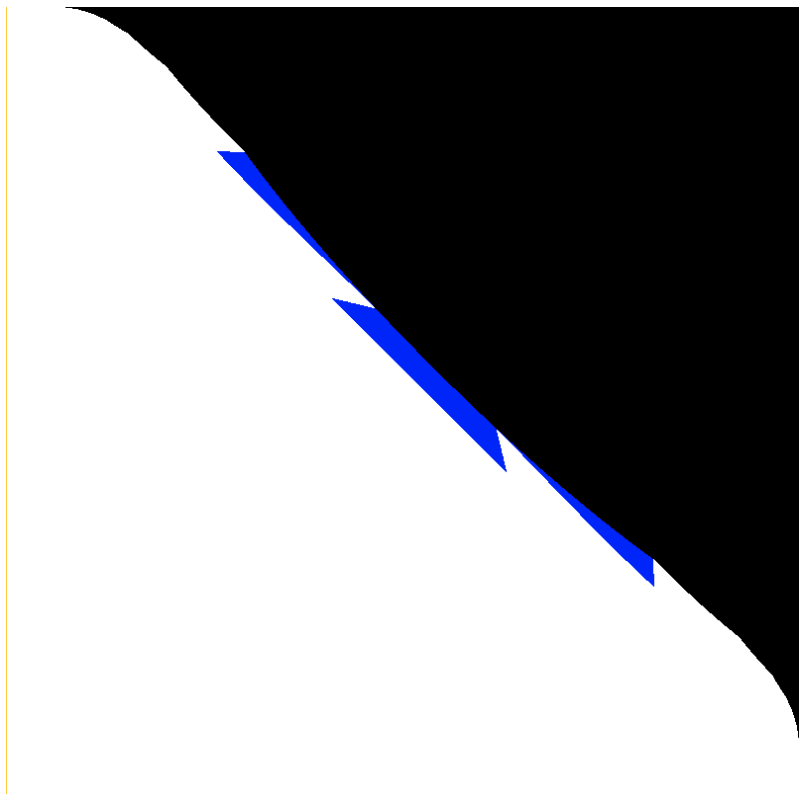}
\caption{Points in $\S$.  Those in blue come from the level~5 approximation.}
\label{fig:S5}
\end{figure}

One might conjecture, when looking at the initial pictures produced that
    all of our curves coming from a level $n$ approximation are connected.
If this were true, then this would imply that $\S$ was connected.
It turns out, rather surprisingly, that this is not the case.
At level 14 we have an occurrence of an island that is not connected to the
    main body of the curve, (see Figure~\ref{fig:island}).
More surprisingly, as we show in Section~\ref{sec:island}, this is not an artifact of our
    choice of approximations of $A$.
This is in fact a legitimate island of $\S$ that is disconnected from the main body.
This proves that $\S$ is not connected, and hence the {\em connectedness locus}
$\mathcal N=\S^c$ studied in detail in \cite{Sol} is not simply connected.

\begin{figure}
\includegraphics[width=300pt,height=300pt]{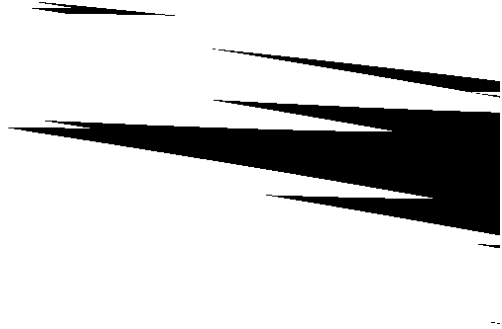}
\caption{Level 14 approximation of $\S$,
    $\beta_1 \in [1.32025, 1.35275]$,
    $\beta_2 \in [1.6306,  1.6631]$}
\label{fig:island}
\end{figure}

\section{$\S$ is not connected.}
\label{sec:island}

In Section~\ref{sec:O S} we gave a technique to show that a point $(\beta_1, \beta_2)$ corresponded
    to a totally disconnected set $A$.
Using this technique, we observed at level~14, that the approximation to $\S$ was not connected
    (see Figure~\ref{fig:island}).

In this section we will prove that this region is indeed in a separate connected component
with respect to the rest of $\S$. Namely, in Figure~\ref{fig:island}
we see a chevron shaped object $C$ which is disconnected from the main body of the
    approximation of $\S$. A significant part of our proof is computer-assisted.

First, we need to show that there exists a point in $C$ which is provably in $\S$.
A quick computer check yields
    $(1.335438104, 1.646743824) \in C \subset \S$.

To prove that $C$ is separate from the main body of $\S$ we will
    give six path connected regions, $R_{w_1}, \dots, R_{w_6}$, all disjoint from $\mathcal S$, such that
    $R_{w_1}$ overlaps with $R_{w_2}$, which in turn overlaps with
    $R_{w_3}$, and so on, where finally $R_{w_6}$ overlaps with the original
    set $R_{w_1}$.
These overlapping sets will surround $C$ -- see Figure~\ref{fig:sharks}.
\begin{figure}
\includegraphics[width=300pt,height=300pt]{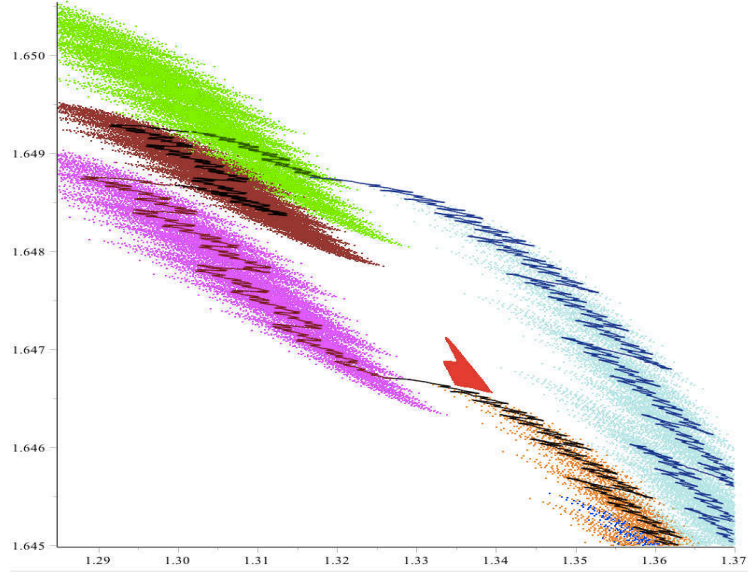}
\caption{The chevron $C$ (red) and $R_{w_1}$ (green), $R_{w_2}$ (turquoise), $R_{w_3}$ (blue),
$R_{w_4}$ (orange), $R_{w_6}$ (magenta) and $R_{w_6}$ (brown),
    along with overlapping continuous paths}
\label{fig:sharks}
\end{figure}

We need a criterion for a pair $(\be_1,\be_2)$ not to lie in $\mathcal S$.
As usual, $m$ stands for $-1$, and $p$ for $1$.
We will also use $z$ for $0$.

\begin{lemma}
\label{lem:not O}
If $\beta_1$ and $\beta_2$ are distinct roots of $P \in \mathbb{Z}[x]$
    with the coefficients of $P$ restricted to $\{p, z, m\}$ then
    $(\beta_1, \beta_2) \notin \S$.
\end{lemma}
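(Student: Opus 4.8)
The plan is to establish the lemma by exhibiting a point of $T_1(A)\cap T_{-1}(A)$: for a two-map IFS the attractor $A=A_{\be_1,\be_2}$ is connected precisely when its two pieces overlap \cite{Hata}, and a connected attractor is certainly not totally disconnected, so a non-empty intersection gives $(\be_1,\be_2)\notin\S$. Now a point of $T_1(A)\cap T_{-1}(A)$ is exactly a point of $A$ possessing an address beginning with $p$ and another beginning with $m$: directly from the definitions one has $\pi(pw)=T_1(\pi(w))$ and $\pi(mw')=T_{-1}(\pi(w'))$ for all $w,w'\in\{p,m\}^{\BbN}$, so it suffices to produce $w,w'$ with $\pi(pw)=\pi(mw')$. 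Writing out $\pi$ and cancelling the common positive factors $\la=\be_1^{-1}$ and $\mu=\be_2^{-1}$, this equality is equivalent to the pair of identities
\[
\sum_{i\ge1}(w_i'-w_i)\,\la^i=2,\qquad \sum_{i\ge1}(w_i'-w_i)\,\mu^i=2,
\]
where of course each coefficient $w_i'-w_i$ is forced to lie in $\{-2,0,2\}$; conversely, any prescription of such digits satisfying these identities is realised by a genuine pair $w,w'$.

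The polynomial $P$ hands us exactly such a solution. Write $P(x)=\sum_{j=0}^n p_j x^j$ with $p_j\in\{-1,0,1\}$ and $n\ge1$ (the existence of the root $\be_1$ forces $\deg P\ge1$); replacing $P$ by $-P$ if necessary, assume $p_n=1$. Dividing $P(\be_1)=0$ by $\be_1^{n}$ and setting $d_k:=-p_{n-k}\in\{-1,0,1\}$ gives $\sum_{k=1}^{n}d_k\la^k=1$, and the identical computation with $\be_2$ in place of $\be_1$ gives $\sum_{k=1}^{n}d_k\mu^k=1$. Multiplying through by $2$, the finitely many digits $2d_1,\dots,2d_n\in\{-2,0,2\}$ already satisfy the two required identities, with vanishing tails.

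It then only remains to package $2d_k$ as address symbols. For $1\le i\le n$ set $(w_i,w_i')=(p,p)$ when $d_i=0$, $(m,p)$ when $d_i=1$, and $(p,m)$ when $d_i=-1$, so that $w_i'-w_i=2d_i$ in every case; for $i>n$ set $w_i=w_i'=p$, contributing $0$ to both series. Then $\pi(pw)=\pi(mw')$, this common value lies in $T_1(A)\cap T_{-1}(A)$, and the lemma follows. There is no real obstacle here: the argument is essentially the observation that a $\{-1,0,1\}$-polynomial relation shared by $\be_1$ and $\be_2$ becomes, after reciprocation and doubling, precisely a statement that some point of $A$ has two distinct addresses, one in $T_1(A)$ and one in $T_{-1}(A)$; the only points to keep straight are the normalisation $p_n=1$ and the clean splitting of each series into its degree-$\le n$ part together with a zero tail.
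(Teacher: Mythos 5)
Your proof is correct and rests on the same core idea as the paper's: the common polynomial relation $P(\beta_1)=P(\beta_2)=0$ is converted into a pair of distinct addresses for a single point of $A$, one beginning with $p$ and the other with $m$, which forces $T_1(A)\cap T_{-1}(A)\neq\emptyset$ and hence connectedness of $A$. The packaging differs slightly: the paper splits $2P = P_+ - P_-$ into two $\{\pm1\}$-coefficient polynomials and uses the two \emph{periodic} addresses $(a^+_n\dots a^+_0)^\infty$ and $(a^-_n\dots a^-_0)^\infty$, relying on $a_n\neq0$ to make the leading symbols differ; you instead divide by $\beta_i^n$ to normalise, realise the digit differences $2d_k\in\{-2,0,2\}$ over indices $1,\dots,n$, and append an eventually-constant tail $p^\infty$, with the prepended $p$ and $m$ making the leading-symbol disagreement automatic. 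Both routes are sound; yours is marginally more explicit about why the two addresses are distinct, while the paper's periodic decomposition is a bit more compact.
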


\begin{proof}
Let $P(x) = a_n x^n + \dots + a_0$ with $a_i \in \{-1, 0, 1\}$.
Write $2 P(x)  = P_+(x) - P_-(x)$ with
    $P_+(x) = a^+_n x^n + \dots + a^+_0$ with $a_i \in \{-1, 1\}$ and
    $P_-(x) = a^-_n x^n + \dots + a^-_0$ with $a_i \in \{-1, 1\}$.
As $P(\beta_1) = P(\beta_2) = 0$ we have that
    $P_+(\beta_1) = P_-(\beta_1)$ and $P_+(\beta_2) = P_-(\beta_2)$.

Notice that
\begin{align*}
s_{1/\beta_1} ((a^+_n a^+_{n-1} \dots a^+_0)^\infty )
    & = P_+(\beta_1) (1/\beta_1^{n+1} + 1/\beta_1^{2(n+1)} + \dots) \\
    & = P_-(\beta_1) (1/\beta_1^{n+1} + 1/\beta_1^{2(n+1)} + \dots) \\
    & = s_{1/\beta_1} ((a^-_n a^-_{n-1} \dots a^-_0)^\infty ).
\end{align*}
A similar result holds for $1/\beta_2$ which gives us that
\[ \pi((a^+_n a^+_{n-1} \dots a^+_0)^\infty)  =
    \pi((a^-_n a^-_{n-1} \dots a^-_0)^\infty). \]

As $a_n \neq 0$ we see that $a_n^+ \neq a_n^-$ and hence
\[ \pi((a^+_n a^+_{n-1} \dots a^+_0)^\infty)
 = \pi((a^-_n a^-_{n-1} \dots a^-_0)^\infty) \in T_1(A)\cap T_{-1}(A).\]

This give that $A$ is connected, and hence $(\beta_1, \beta_2) \not\in \S$.
\end{proof}

\begin{rmk}
An essentially identical proof holds if $1/\beta_1$ and $1/\beta_2$ are two distinct
    roots of a power series with coefficients $\{-1, 0, 1\}$.
\end{rmk}

We next need a result of Odlyzko and Poonen \cite[Lemma~4.1]{OdlyzkoPoonen93}:

\begin{lemma}\label{lem:odl}
Let $Y$ be a topological space.
Suppose $f:\{0, 1\}^\BbN \to Y$ is a continuous map such that
    \[ f([w0]) \cap f([w1]) \neq \emptyset \]
for all $w \in \{0, 1\}^*$.
Then the image of $f$ is path connected.
\end{lemma}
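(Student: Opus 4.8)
The plan is to reproduce the path-building (``zipper'') argument of Odlyzko and Poonen. Write $C=\{0,1\}^{\BbN}$ and $Z=f(C)$. Since path connectedness only asks for a path in $Z$ between any two of its points, it suffices, given $\underline a,\underline b\in C$, to construct a continuous map $\Gamma(\underline a,\underline b):[0,1]\to Z$ with $\Gamma(\underline a,\underline b)(0)=f(\underline a)$ and $\Gamma(\underline a,\underline b)(1)=f(\underline b)$. Using the hypothesis, first fix for every finite word $w$ a point $p_w\in f([w0])\cap f([w1])$ together with preimages $\underline\gamma_w^{0}\in[w0]$ and $\underline\gamma_w^{1}\in[w1]$ satisfying $f(\underline\gamma_w^{0})=f(\underline\gamma_w^{1})=p_w$.

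Next I would define $\Gamma$ by recursion along the binary subdivision of $[0,1]$. If $\underline a=\underline b$, let $\Gamma(\underline a,\underline b)$ be constant. Otherwise let $w$ be the longest common prefix of $\underline a$ and $\underline b$, say $\underline a\in[w0]$ and $\underline b\in[w1]$ (the other case is symmetric), and put
\[
\Gamma(\underline a,\underline b)|_{[0,1/2]}=\Gamma(\underline a,\underline\gamma_w^{0}),\qquad
\Gamma(\underline a,\underline b)|_{[1/2,1]}=\Gamma(\underline\gamma_w^{1},\underline b),
\]
each reparametrised affinely; by induction the common value at $t=1/2$ is $p_w$. The reason for splitting at the longest common prefix is that in each sub-problem the two arguments now agree on a strictly longer prefix than $w$, so along any branch of the subdivision the common prefix of the current pair of arguments has length tending to infinity. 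Consequently the value of $\Gamma(\underline a,\underline b)$ at a dyadic parameter stabilises to some $p_v$ after finitely many steps, while at a non-dyadic parameter $t$ the pairs of arguments converge to a single sequence $\underline\omega(t)\in C$ and we set the value to be $f(\underline\omega(t))$. Every value thus lies in $Z$, and $\Gamma(\underline a,\underline b)(0)=f(\underline a)$, $\Gamma(\underline a,\underline b)(1)=f(\underline b)$ are immediate from the construction.

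The substantive part, and the main obstacle, is continuity of $\Gamma(\underline a,\underline b)$; since no metric on $Y$ is available this must be extracted directly from continuity of $f$. The key invariant is that if $J$ is the $n$-th level dyadic subinterval of $[0,1]$ containing a given parameter $t_0$, then $\Gamma(\underline a,\underline b)(J)\subseteq f([v])$, where $v$ is the common prefix of the two arguments of the sub-problem carried by $J$ -- indeed every intermediate value $p_{v'}$ and every limiting value $f(\underline\omega)$ produced inside $J$ comes from a word extending $v$. These prefixes $v$ have length at least $n$ and are all prefixes of the limit sequence attached to $t_0$ (or, when $t_0$ is an interior splitting point with $\Gamma(t_0)=p_u$, prefixes of the chosen preimages $\underline\gamma_u^{0},\underline\gamma_u^{1}$, which map to $p_u$). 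Since the cylinders $[v]$ form a neighbourhood basis of that sequence in $C$ and $f$ is continuous, $f([v])$ is contained in any prescribed open neighbourhood of $\Gamma(\underline a,\underline b)(t_0)$ once $n$ is large; as $t_0$ lies in $J$, this yields continuity at $t_0$, with the obvious one-sided version of the estimate at the countably many dyadic parameters. Hence $\Gamma(\underline a,\underline b)$ is a path in $Z$ joining $f(\underline a)$ to $f(\underline b)$, and since $\underline a,\underline b$ were arbitrary, $Z=f(C)$ is path connected.
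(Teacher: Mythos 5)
Your proposal is correct, and it follows the same Odlyzko--Poonen bisection scheme that the paper uses for the three-symbol generalization Lemma~\ref{lem:OP} (for Lemma~\ref{lem:odl} itself the paper simply cites Odlyzko and Poonen). You are more careful than the paper's brief sketch on the continuity step: the paper appeals to distances between adjacent points going to zero, which implicitly treats $Y$ as metric, whereas your invariant that the image of each level-$n$ dyadic subinterval is contained in $f([v])$ for a word $v$ of length at least $n$, together with the fact that cylinders form a neighbourhood base in $\{0,1\}^{\BbN}$, correctly pulls continuity back through $f$ for an arbitrary topological space $Y$.
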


Recall that $[i_1\dots i_k]$ stands for the cylinder
$\{a_j\}_{j=1}^\infty \subset \{0,1\}^\mathbb N$ such that
$a_j = i_j$ for $j = 1,2, \dots, k$. Lemma~\ref{lem:odl}
can be easily generalized to the space $\{p, z,  m\}^\BbN$:

\begin{lemma}
\label{lem:OP}
Let $Y$ be a topological space.
Suppose $f:\{p, z, m\}^\BbN \to Y$ is a continuous map such that
\begin{eqnarray*}
&& f([wz]) \cap f([wp]) \neq \emptyset \\
&& f([wm]) \cap f([wp]) \neq \emptyset \\
&& f([wm]) \cap f([wz]) \neq \emptyset
\end{eqnarray*}
for all $w \in \{p, z, m\}^*$.
Then the image of $f$ is path connected.
\end{lemma}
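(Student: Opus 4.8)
The plan is to reduce Lemma~\ref{lem:OP} to the binary case, Lemma~\ref{lem:odl}, by coding the ternary shift space into the binary one. Concretely, I would fix the prefix-free binary code $p \mapsto 0$, $z \mapsto 10$, $m \mapsto 11$, and let $\psi : \{0,1\}^{\BbN} \to \{p, z, m\}^{\BbN}$ be the induced decoding map: read the binary sequence left to right, emitting $p$ upon reading $0$, and $z$ (resp.\ $m$) upon reading $10$ (resp.\ $11$). Since the code is prefix-free and every codeword has length $1$ or $2$, greedy decoding never stalls and turns every infinite binary sequence into an infinite ternary sequence; the map $\psi$ is continuous and surjective, the inverse on ternary sequences being block concatenation of codewords. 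Set $g = f \circ \psi$. Then $g$ is continuous, and because $\psi$ is onto we have $\mathrm{image}(g) = \mathrm{image}(f)$.

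Next I would check that $g$ satisfies the hypothesis of Lemma~\ref{lem:odl}, i.e.\ $g([v0]) \cap g([v1]) \neq \emptyset$ for every finite binary word $v$. The crucial observation is that after decoding a finite binary word $v$ one is in exactly one of two states: either $v$ has just completed a ternary word $w$, or $v = \mathrm{enc}(w)1$ has emitted $w$ and holds a single leftover bit $1$. I would handle these two cases separately. In the first, $\psi([v0]) = [wp]$ while $\psi([v1]) = [wz] \cup [wm]$, so $g([v0]) \cap g([v1]) \supseteq f([wp]) \cap f([wz])$, which is nonempty by hypothesis. In the second, $v0 = \mathrm{enc}(wz)$ and $v1 = \mathrm{enc}(wm)$, so $\psi([v0]) = [wz]$, $\psi([v1]) = [wm]$, and $g([v0]) \cap g([v1]) = f([wz]) \cap f([wm]) \neq \emptyset$. (Note that only the two ``adjacent'' intersection hypotheses, on $\{p,z\}$ and $\{z,m\}$, get used; the one on $\{p,m\}$ is a harmless redundancy for the purposes of path-connectedness.)

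Finally, Lemma~\ref{lem:odl} applied to $g$ shows that $\mathrm{image}(g)$ is path connected, and since this set coincides with $\mathrm{image}(f)$, the conclusion follows. The only slightly delicate point — the one I would write out with care — is the bookkeeping in the case analysis above, namely correctly reading off $\psi([v0])$ and $\psi([v1])$ from the decoding state of $v$; the continuity and surjectivity of $\psi$ and the resulting equality of images are routine.
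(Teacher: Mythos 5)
Your proposal is correct, but it takes a genuinely different route from the paper. The paper proves Lemma~\ref{lem:OP} by re-running the Odlyzko--Poonen bisection argument directly on the ternary shift: given $x_0=f(w_0)$ and $x_1=f(w_1)$, let $w$ be the common prefix, pick a point in $f([wa_0])\cap f([wa_1])$ with two preimages lying in $[wa_0]$ and $[wa_1]$, and recurse to build a sequence $x_{i/2^k}$ that by continuity of $f$ defines a path. Your approach instead reduces the ternary statement to the binary Lemma~\ref{lem:odl} by pushing forward along the decoding map $\psi$ of the prefix-free code $p\mapsto 0$, $z\mapsto 10$, $m\mapsto 11$. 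The case analysis you sketch is correct: every finite binary $v$ is either in ``clean'' state ($v=\mathrm{enc}(w)$, so $\psi([v0])=[wp]$ and $\psi([v1])=[wz]\cup[wm]$) or in ``pending'' state ($v=\mathrm{enc}(w)1$, so $\psi([v0])=[wz]$ and $\psi([v1])=[wm]$), and in each case the needed intersection is supplied by one of the adjacency hypotheses. What the coding reduction buys is that Lemma~\ref{lem:odl} is used as a black box and nothing is re-proved; it also makes visible the fact you note, that only the two ``chain'' hypotheses $f([wp])\cap f([wz])\neq\emptyset$ and $f([wz])\cap f([wm])\neq\emptyset$ are actually needed, so the lemma holds under a weaker hypothesis. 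What the paper's direct bisection buys is self-containedness and an argument that generalizes transparently to any finite alphabet without choosing a code. Both are valid; be sure, if you write your version out, to state explicitly the continuity and surjectivity of $\psi$ (both follow from the codewords having bounded length) and to phrase the state dichotomy carefully, since that is where the bookkeeping lives.
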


The proof is a simple variation of the result of Odlyzko and Poonen.
We provide it here for completeness.
\begin{proof}
This is in essence a bisection method.
Given two infinite words $w = a_1 a_2 a_3 \dots$ and $w' = b_1 b_2 b_3 \dots$,
    we define the usual metric by
    $\mathrm{dist}(w, w') = 1/2^k$ where
    $a_i = b_i$ for $i = 1, \dots, k-1$ and $a_k \neq b_k$.
If no such $k$ exists, then $w = w'$ and  $\mathrm{dist}(w, w') =0$.
Given two points $x_0 = f(w_0)$ and $x_1 = f(w_1)$, we construct two new
    words $w_{1/2}$ and $w_{1/2}'$ such that
\begin{itemize}
\item $f(w_{1/2}) = f(w_{1/2}')$,
\item $\mathrm{dist}(w_0, w_{1/2}) < \mathrm{dist}(w_0, w_1)$,
\item $\mathrm{dist}(w_{1/2}', w_1) < \mathrm{dist}(w_0, w_1)$.
\end{itemize}
To do this we let $w$ be the common prefix of $w_0$ and $w_1$ so that
    $w_0 = w a_0 v_0$ and $w_1 = w a_1 v_1$ with $a_0 \neq a_1$.
We then find $w_{1/2} \in [w a_0]$ and $w_{1/2}' \in [w a_1]$ so that
    $f(w_{1/2}) = f(w_{1/2}') \in f([w a_0]) \cap f([w, a_1])$.
Such a point exists by assumption.

We now induct on this construction to find points $x_{1/4}$ and $x_{3/4}$
and then $x_{1/8},x_{3/8},x_{5/8}$ and $x_{7/8}$ and so on.
We notice by the continuity of $f$ and the fact the distances between
    adjacent points go to $0$ in the limit, then this construction will
    define a continuous path in the image of $f$.
\end{proof}

Let $v \in \{p, m, z\}^*$ be a finite word of length $n$.
Furthermore, assume that $v_1 \neq z$.
Define $P_v(x) = P(x) = v_1 x^{n-1} + \dots + v_n$.
If $\beta_1, \beta_2$ are distinct roots of $P$ then we see from Lemma~\ref{lem:not O} that
    $(\beta_1, \beta_2) \notin \S$.

Let $\beta_1^+, \beta_1^-, \beta_2^+, \beta_2^-$ be distinct roots of
    the rational function $P(x) \pm \frac{1}{x-1}$, assuming that they exist.
Let $I_1 = [\beta_1^{\pm}, \beta_1^{\mp}]$ and $I_2 = [\beta_2^{\pm}, \beta_2^{\mp}]$.
Let $f(x) \in \left\{ \sum_{i=1}^\infty w_i x^{-i} : w \in \{p, m, z\}^\BbN \right\}$.
We see that if $|f'(x)| < |P'(x)|$ for all $x \in I_1$, then
    $P(x) + f(x)$ will have a unique root in $I_1$.
We will denote this root $\beta_1^{(w)}$.
Similarly, if $|f'(x)| < |P'(x)|$ for all $x \in I_2$, then
    $P(x) + f(x)$ will have a unique root in $I_2$, which we will denote $\beta_2^{(w)}$.

We see that if $|P'(x)| > 1/(x-1)^2$ for all $x \in I_1$ and $x \in I_2$, then there
    will be well defined roots $\beta_1^{(w)}$ and $\beta_2^{(w)}$ for all
    $w \in \{p, m, z\}^\BbN$.

We will call the existence of $\beta_1^{\pm}$, $\beta_2^{\pm}$ and
    $|P'(x)| > 1/(x-1)^2$ on $I_1$ and $I_2$ {\em property RD}.

If for a word $v$ its associated polynomial $P$ has
    property RD, then the map
    $f_v = f:\{p, z, m\}^\BbN \to \BbR^2$ given by
    $f(w) = \bigl(\beta_1^{(w)}, \beta_2^{(w)}\bigr)$ is well defined.
It is easy to see that such a map is continuous.
It is also easy to see that for those infinite words $w$ which only contain a finite number
    of non-zero terms, the image corresponds to points that are roots of a
    $\{p, z, m\}$ polynomial, and hence such $w$ are not in $\S$.

To see that any such $w$ satisfies the conditions of Lemma~\ref{lem:OP}, let
    $v$ correspond to the coefficients of $P$.
Suppose $w \in \{p, z, m \}^*$.
We see that
    $f_v(w0) = f_v(wvw) = f_v(w\tilde v \tilde w)$.
Thus, if we have a polynomial $P_v$ which satisfies property RD, then
    we can associate with $P_v$ a set of values which are not in $\S$, and whose closure
    is path connected.
We will denote this path connected set by $R_v$.
By Proposition~\ref{prop:S-open}, the complement of $\S$ is closed.
Consequently, $R_v \cap \S = \emptyset$ for all $v$ satisfying
    property RD.

It is easy to see that if $w$ satisfies property RD and $w$ is a prefix of $w'$, then
    $w'$ satisfies property RD as well.
Furthermore, if $w$ is a prefix of $w'$, then $R_{w'} \subset R_{w}$.

\begin{lemma}
Let $w$ satisfy property RD.
Then $f(wm^\infty), f(wp^\infty) \in R_w$.
Furthermore, $R_w$ is contained within the box with sides
    parallel to the axes, and with corners at $f(wm^\infty)$ and $f(wp^\infty)$.
\end{lemma}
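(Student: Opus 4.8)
The plan is to control each coordinate $\beta_i^{(u)}$ ($i\in\{1,2\}$) of $f(u)$ by a monotonicity argument. Write $f_u(x)=\sum_{j\ge1}u_jx^{-j}$ and $g_u(x)=P(x)+f_u(x)$, so that $\beta_i^{(u)}$ is by definition the unique zero of $g_u$ in $I_i$. The idea is that $g_u$ depends monotonically on the ``digit tail'' of $u$, so that the two extreme tails $p^\infty$ and $m^\infty$ pin $\beta_i^{(u)}$ between the $i$-th coordinates of $f(wp^\infty)$ and $f(wm^\infty)$; since this happens in both coordinates, $f(u)$ lands in the stated rectangle.

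First I would record that property RD makes $g_u$ strictly monotone on each $I_i$, with the sign of $g_u'$ equal to the (constant) sign of $P'$ there. Indeed, $|g_u'(x)-P'(x)|=|f_u'(x)|\le\sum_{j\ge1}jx^{-j-1}=1/(x-1)^2<|P'(x)|$ on $I_i$ by property RD, so $g_u'$ never vanishes on $I_i$; and $P'$ itself has constant sign on $I_i$, again because $|P'|>1/(x-1)^2>0$ there. This reproves that $\beta_i^{(u)}$ is well defined, and it yields the comparison principle I need: if two functions are strictly monotone on $I_i$ in the common direction of $P'$, each vanishing somewhere in $I_i$, and one dominates the other throughout $I_i$, then the zero of the larger one lies to the left of the zero of the smaller one when $P'>0$ on $I_i$, and to the right when $P'<0$.

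Next I would compare the tails. Put $n=|w|$ and fix $u\in[w]$, so $u_j=w_j$ for $j\le n$. Since $u_j\in\{-1,0,1\}$ and $x^{-j}>0$, for every $x\in I_i$ we have
\[
f_{wm^\infty}(x)\ \le\ f_u(x)\ \le\ f_{wp^\infty}(x),
\]
because $f_{wm^\infty}$ and $f_{wp^\infty}$ agree with $f_u$ in the first $n$ digits and thereafter have the smallest ($-1$), resp.\ largest ($+1$), admissible digits. Hence $g_{wm^\infty}\le g_u\le g_{wp^\infty}$ on $I_i$, and by the comparison principle $\beta_i^{(u)}$ lies in the closed interval with endpoints $\beta_i^{(wm^\infty)}$ and $\beta_i^{(wp^\infty)}$. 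Applying this for $i=1$ and $i=2$ puts $f(u)=(\beta_1^{(u)},\beta_2^{(u)})$ in the closed axis-parallel rectangle whose opposite corners are $f(wm^\infty)$ and $f(wp^\infty)$. As $u\in[w]$ was arbitrary, $f([w])$ lies in this rectangle, and therefore so does its closure $R_w$. Finally $wm^\infty,wp^\infty\in[w]$, so $f(wm^\infty),f(wp^\infty)\in f([w])\subseteq R_w$, which is the first assertion.

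The argument is essentially a routine unwinding of the definitions. The only step needing any care is the monotonicity input: one must use property RD to guarantee that $P'$ keeps a constant sign on each $I_i$ (so that $g_u$ is genuinely strictly monotone there, with a unique zero that moves in a predictable direction under a pointwise shift of $g_u$), and then keep straight, once that sign is fixed, which of $wp^\infty$ and $wm^\infty$ produces the upper and which the lower bound on a given coordinate. I do not expect a real obstacle beyond this bookkeeping.
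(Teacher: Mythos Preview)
Your argument is correct. The paper actually states this lemma without proof, so there is nothing to compare against; your monotonicity argument supplies the omitted details cleanly.

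It is worth noting that once the definitions are unpacked the claim is nearly immediate: the interval $I_i$ is by construction the closed interval with endpoints $\beta_i^{+}$ and $\beta_i^{-}$, the roots of $P(x)\pm 1/(x-1)$; but these two functions are precisely $g_{p^\infty}$ and $g_{m^\infty}$, so the endpoints of $I_i$ are exactly $\beta_i^{(p^\infty)}$ and $\beta_i^{(m^\infty)}$. Since property RD forces every $\beta_i^{(u)}$ to lie in $I_i$ by definition, the bounding box is just $I_1\times I_2$ and the claim follows. Your explicit monotonicity argument is a perfectly good alternative route and has the added virtue of actually justifying the existence and uniqueness of $\beta_i^{(u)}$ in $I_i$, which the paper asserts but does not prove.

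One minor notational point: in the paper's setup the argument of $f_w$ is the \emph{tail} after the prefix $w$ (so the lemma's $f(wm^\infty)$ should be read as $f_w(m^\infty)$), whereas your $f_u$ with $u\in[w]$ carries the prefix digits too. Since those digits are fixed across all $u\in[w]$, none of the comparisons $g_{wm^\infty}\le g_u\le g_{wp^\infty}$ are affected, and the bound $|f_u'(x)|\le 1/(x-1)^2$ still holds, so this does not disturb the argument.
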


We call such a box a {\em bounding box} for $R_w$.
We will also need the concept of a set of bounding boxes for a continuous path.
Let $w w_0$ and $w w_1$ be two points within $R_w$.
By Lemma~\ref{lem:OP}, there is a continuous path from $w w_0$ to $w w_1$ in $R_w$.
Let $k$ be fixed.
To construct this path, we find a series of intermediate points
    $w_{i/2^k}$, each with two addresses.
Each of these addresses is such that
    $w_{i/2^k}$ and $w_{(i+1)/2^k}$ agree on the first $|w| +k$ terms.
Denote these terms by $a_1 a_2 \dots a_k$.

Thus, both these terms are found within the subregions $R_{w a_1 a_2 \dots a_k}$.
Furthermore, by construction, the path from $w_{i/2^k}$ to $w_{(i+1)/2^k}$ will also
    be within this subregion.
Hence this pair, and the path between this pair will be contained within the bounding
    box for $R_{w a_1 a_2 \dots a_k}$.
Taking the union over all of these pairs, we get a series of smaller bounding boxes that
    contain the continuous path from $w w_0$ to $w w_1$.
We will call such a series of boxes the {\em level $k$ bounding boxes} for a path in $R_{w}$.

\begin{lemma}
\label{lem:poly}
The following words satisfy property RD.
\begin{eqnarray*}
w_1 &=& pmmmpzzppzppzppz \\
w_2 &=& pmmmzp^7mz \\
w_3 &=& pmmmzp^7mp \\
w_4 &=& pmmmzp^7zm \\
w_5 &=& pmmmpzzppzpppzpz \\
w_6 &=& pmmmpzzpppmp^4zp.
\end{eqnarray*}
\end{lemma}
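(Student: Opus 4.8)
The plan is to verify \emph{property RD} for each of the six words $w_1,\dots,w_6$ directly, since the definition reduces to three concrete, finitely-checkable conditions on the polynomial $P=P_{w_i}$ of degree $14$ (or $15$) attached to each word. First I would write down, for each $w_i$, the polynomial $P_{w_i}(x)=v_1x^{n-1}+\dots+v_n$ with $v=w_i$ (here $n=|w_i|$, so $\deg P=n-1=14$), together with the auxiliary rational function $P(x)\pm\frac{1}{x-1}$. The three things to confirm are: (a) the equation $P(x)+\frac1{x-1}=0$ has a root $\beta_1^{+}$ and $P(x)-\frac1{x-1}=0$ a root $\beta_1^{-}$, both lying in the relevant window of the $\beta_1$-axis (roughly $[1.32,1.36]$ from Figure~\ref{fig:island}), and similarly $\beta_2^{\pm}$ in the $\beta_2$-window (roughly $[1.63,1.67]$); (b) setting $I_1=[\beta_1^{\pm},\beta_1^{\mp}]$ and $I_2=[\beta_2^{\pm},\beta_2^{\mp}]$, one has $|P'(x)|>1/(x-1)^2$ for every $x\in I_1\cup I_2$. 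These are all guaranteed by explicit interval arithmetic: $P'$ is an explicit polynomial of degree $13$, $I_1,I_2$ are short intervals bounded away from $1$, and the quantity $1/(x-1)^2$ on those intervals is at most of order $1/(0.32)^2\approx 9.8$, which is comfortably dominated by $|P'|$ there.

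The key steps, in order, are: (1) For each $w_i$, record the coefficient string and hence $P$, $P'$. (2) Numerically (with rigorous enclosures) locate $\beta_1^{\pm},\beta_2^{\pm}$ as roots of $P(x)\pm\frac1{x-1}$ and check they fall inside the boxes under consideration; this is where one invokes the intermediate value theorem on a sign change of $P(x)(x-1)\pm1$. (3) On the resulting intervals $I_1,I_2$, bound $P'$ below (via a Taylor/interval enclosure of the degree-$13$ polynomial $P'$) and $1/(x-1)^2$ above, and verify the strict inequality $|P'(x)|>1/(x-1)^2$. Once these hold, the discussion preceding the lemma shows the map $f_{w_i}$ is well defined and continuous, and one may invoke Lemma~\ref{lem:OP} (its hypotheses are met because $f_{w_i}([ua])\cap f_{w_i}([ub])\ni f_{w_i}(u\,v\,\dots)=f_{w_i}(u\,\tilde v\,\dots)$ for any $u$ and any two distinct $a,b$, using $f_v(w z^\infty)=f_v(wv z^\infty)=f_v(w\tilde v z^\infty)$) to conclude each $R_{w_i}$ is path connected and, by Lemma~\ref{lem:not O} together with $\S$ being open (Proposition~\ref{prop:S-open}), disjoint from $\S$.

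I expect the main obstacle to be purely computational bookkeeping rather than conceptual: one must carry out the verification of step (2)–(3) rigorously for six degree-$14$ polynomials, ensuring the rounding in the interval arithmetic does not swamp the inequalities, and in particular that the roots $\beta_j^{\pm}$ really do lie in the narrow windows shown in Figure~\ref{fig:island} (so that the six $R_{w_i}$ will, in the subsequent argument, genuinely surround the chevron $C$). A secondary subtlety is that $P'$ may have its own zeros nearby, so one should check $I_1$ and $I_2$ avoid any root of $P'$; since $I_1,I_2$ are short and $P'$ has degree $13$, a crude enclosure of $P'$ on these intervals — staying a definite distance from $0$ — suffices, and the strict inequality $|P'|>1/(x-1)^2$ then follows with room to spare. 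Hence the proof is a finite check, and I would present it as: "For each $i$ one verifies by direct computation (interval arithmetic) that $P_{w_i}$ satisfies property RD; the certificates are summarised in Table~\ref{...}." This completes the lemma.
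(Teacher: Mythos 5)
Your proposal is correct and matches the paper's approach: the paper's proof is just ``a simple calculation,'' and your steps (1)--(3) spell out exactly what that calculation amounts to --- locating the roots $\beta_j^{\pm}$ of $P_{w_i}(x)\pm\frac{1}{x-1}$ and checking $|P'_{w_i}(x)|>1/(x-1)^2$ on $I_1,I_2$ by rigorous numerics. (Minor bookkeeping: the degrees of the six polynomials actually range from $13$ to $16$ rather than all being $14$ or $15$, but this does not affect your argument.)
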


\begin{proof}
This is a simple calculation that we leave as an exercise for the reader.
\end{proof}

\begin{lemma}
\label{lem:corner}
The closure of the set of roots generated by the polynomials in
    Lemma~\ref{lem:poly} surrounds $C$.
\end{lemma}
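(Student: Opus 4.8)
\smallskip

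The plan is to exhibit inside $\bigcup_{i=1}^{6}\overline{R_{w_i}}$ a closed curve $\Gamma$ having the point $(1.335438104,1.646743824)\in C$ in a bounded complementary component and a chosen point of the main body of $\mathcal S$ in the unbounded one; since each $R_{w_i}$ is path connected and, by the discussion preceding Lemma~\ref{lem:poly}, disjoint from $\mathcal S$, this is exactly what it means for the closure of the generated root set to surround $C$. So there are two things to establish: that consecutive regions overlap, so that such a $\Gamma$ exists at all, and that it winds once around $C$.

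For the overlaps, I would check $\overline{R_{w_i}}\cap\overline{R_{w_{i+1}}}\neq\varnothing$ for $i=1,\dots,6$ (with $w_7:=w_1$). When $w_i$ and $w_{i+1}$ differ only in their last letter --- which happens for the pair $w_2,w_3=pmmmzp^7m\{z,p\}$ --- the overlap $R_{uz}\cap R_{up}$ with $u=pmmmzp^7m$ is immediate from the cylinder-intersection hypotheses of Lemma~\ref{lem:OP} that property RD guarantees. For the remaining pairs one produces an explicit common point: a pair $(\gamma_1,\gamma_2)\in I_1\times I_2$ together with infinite words $u\in[w_i]$, $u'\in[w_{i+1}]$ over $\{p,z,m\}$ with $\pi(u)=\pi(u')=(\gamma_1,\gamma_2)$. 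Exactly as in the proof of Lemma~\ref{lem:not O}, such $u,u'$ may be taken eventually periodic, arising from a single $\{p,z,m\}$-polynomial that has two distinct expansions at $(\gamma_1,\gamma_2)$; verifying a candidate is then a finite exact computation. This makes $\bigcup\overline{R_{w_i}}$ a connected compact ``ring''.

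Finally, I would enclose each $R_{w_i}$ from outside by its level-$k$ bounding boxes, built from the corner points $f(w_ip^\infty)$, $f(w_im^\infty)$ of the associated polynomials and their self-similar subdivisions; each box-chain contains a continuous arc running across $R_{w_i}$ between the two overlap points at its ends, and concatenating the six arcs yields the loop $\Gamma\subset\bigcup\overline{R_{w_i}}$. A winding-number (equivalently, ray-crossing) computation on the resulting polygonal loop then certifies that $\Gamma$ encircles $(1.335438104,1.646743824)$ but not the chosen point of the main body of $\mathcal S$. The main obstacle is the rigour of these outer estimates: one must certify with interval or exact algebraic arithmetic that property RD genuinely holds for each $w_i$ in Lemma~\ref{lem:poly} (so the roots $\beta_i^{(w)}$, and hence the bounding boxes, are well defined and really contain $R_{w_i}$), that the claimed overlap points are exact coincidences of expansions rather than near misses, and that $\Gamma$ carries no stray self-crossing that would corrupt the winding count --- this verification is the computer-assisted core of the proof.
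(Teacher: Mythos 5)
Your overall strategy --- chain together the six regions $\overline{R_{w_i}}$ into a loop in the complement of $\mathcal S$ that separates a point of the chevron $C$ from the main body --- matches the paper's. Handling the $(w_2,w_3)$ overlap via the cylinder-intersection hypotheses of Lemma~\ref{lem:OP}, since $w_2=uz$ and $w_3=up$ for $u=pmmmzp^7m$, is also exactly right, and the paper treats $(w_3,w_4)$ by a similarly explicit coincidence, $f_{w_3}(ppzm^7)=f_{w_4}(pppzm^7)$.

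The gap is in how you propose to establish the other four consecutive overlaps. You claim that each remaining intersection $\overline{R_{w_i}}\cap\overline{R_{w_{i+1}}}$ can be exhibited by an eventually periodic common point, ``arising from a single $\{p,z,m\}$-polynomial that has two distinct expansions, exactly as in the proof of Lemma~\ref{lem:not O}''. That lemma does not grant you this: it starts from such a polynomial and deduces a point of $\S^c$, but it does not say that two overlapping closed root-sets must meet at an algebraic point, and there is no a priori reason they should. The paper deliberately sidesteps this issue: for the pairs $(w_1,w_2)$, $(w_4,w_5)$, $(w_5,w_6)$, $(w_6,w_1)$ it exhibits no common point at all, but instead produces two continuous arcs --- one in each region, running between the certified corner points $f_v(p^\infty)$ and $f_v(m^\infty)$ of suitable refinements such as $R_{w_1zpppzpzp}$ and $R_{w_2m^{11}}$ --- enclosed in explicit level-$k$ bounding boxes whose positions force the arcs to cross by an intermediate-value argument. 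That crossing argument, not an algebraic coincidence search, is what you would need for those four pairs. On the other hand, your winding-number (ray-crossing) verification of the ``surrounds'' conclusion is a sensible way to make rigorous what the paper merely reads off Figure~\ref{fig:sharks}.
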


\begin{proof}
To see that $R_{w_1}$ is connected to $R_{w_2}$, consider
    $R_{w_1zpppzpzp}$ and $R_{w_2m^{11}}$.
The former has corners at:
    \[ [1.323453274, 1.648718809], [1.314160784, 1.648757942] \]
    and the latter has corners at:
    \[ [1.321413068, 1.648715950], [1.315100914, 1.648769575]. \]
The path from $ [1.323453274, 1.648718809]$ to $[1.314160784, 1.648757942]$
    must intersect the path from
    $ [1.321413068, 1.648715950]$ to $[1.315100914, 1.648769575]$.
See Figure~\ref{fig:w1w2} for these two sets, and the continuous paths going from
    $f_{w_1zpppzpzp}(p^\infty)$ to $f_{w_1zpppzpzp}(m^\infty)$, and from
    $f_{w_1zpppzpzp}(p^\infty)$ to $f_{w_2m^{11}}(m^\infty)$,
    and the bounding boxes.
\begin{figure}
\includegraphics[width=300pt,height=300pt]{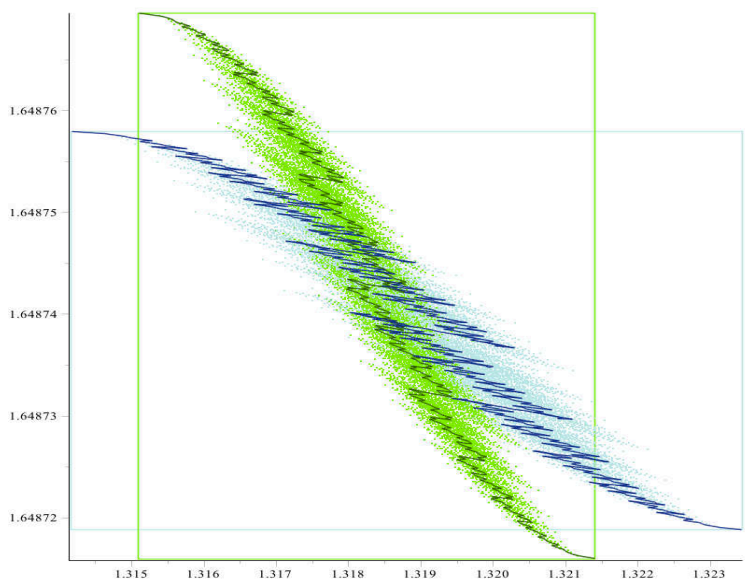}
\caption{$R_{w_1zpppzpzp}$ (green) and $R_{w_2m^{11}}$ (turquoise)}
\label{fig:w1w2}
\end{figure}

To see that $R_{w_2}$ is connected to $R_{w_3}$, we notice that
    \[
    f_{w_2}(pmmmzp^7m) = f_{w_3}(mmmzp^7m).
    \]

To see that $R_{w_3}$ is connected to $R_{w_4}$, we notice that
    \[
    f_{w_3}(ppzm^7) = f_{w_4}(pppzm^7).
    \]

To see that $R_{w_4}$ is connected to $R_{w_5}$, consider
    $R_{w_4m^{14}}$ and $R_{w_5ppzzpppzpmz}$.
The former has corners at:
\[ [1.328228762, 1.646703763], [1.324717957, 1.646712975] \]
    and the latter has corners at:
\[ [1.327323576, 1.646702692], [1.324894555, 1.646715284]. \]
The path from $[1.328228762, 1.646703763]$ to $[1.324717957, 1.646712975]$
    must intersect the path from
    $[1.327323576, 1.646702692]$ to $[1.324894555, 1.646715284]$.
See Figure~\ref{fig:w4w5} and the continuous paths connecting the extreme points of each of these sets.
\begin{figure}
\includegraphics[width=300pt,height=300pt]{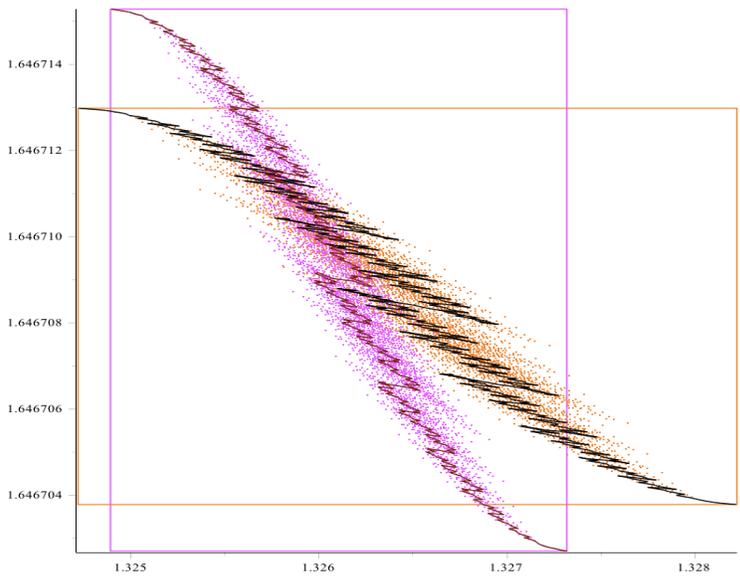}
\caption{$R_{w_4m^{14}}$ (orange) and $R_{w_5ppzzpppzpmz}$ (magenta)}
\label{fig:w4w5}
\end{figure}

For the next two, we need to strengthen the idea of bounding box, as described above.

Consider $R_{w_5mmmp^4mppp}$ and $R_{w_6pz^4zzmzmm}$.
See Figure~\ref{fig:w5w6} and the continuous paths connecting the extreme points of each of these sets as well as
    the level~9 bounding boxes for the path in $R_{w_5mmmp^4mppp}$ and
    the level~2 bounding boxes for the path in $R_{w_6pzm^4zzmzmm}$.
Precise coordinates for the bounding boxes for the continuous paths can be
    found at \cite{HomePage}.
\begin{figure}
\includegraphics[width=300pt,height=300pt]{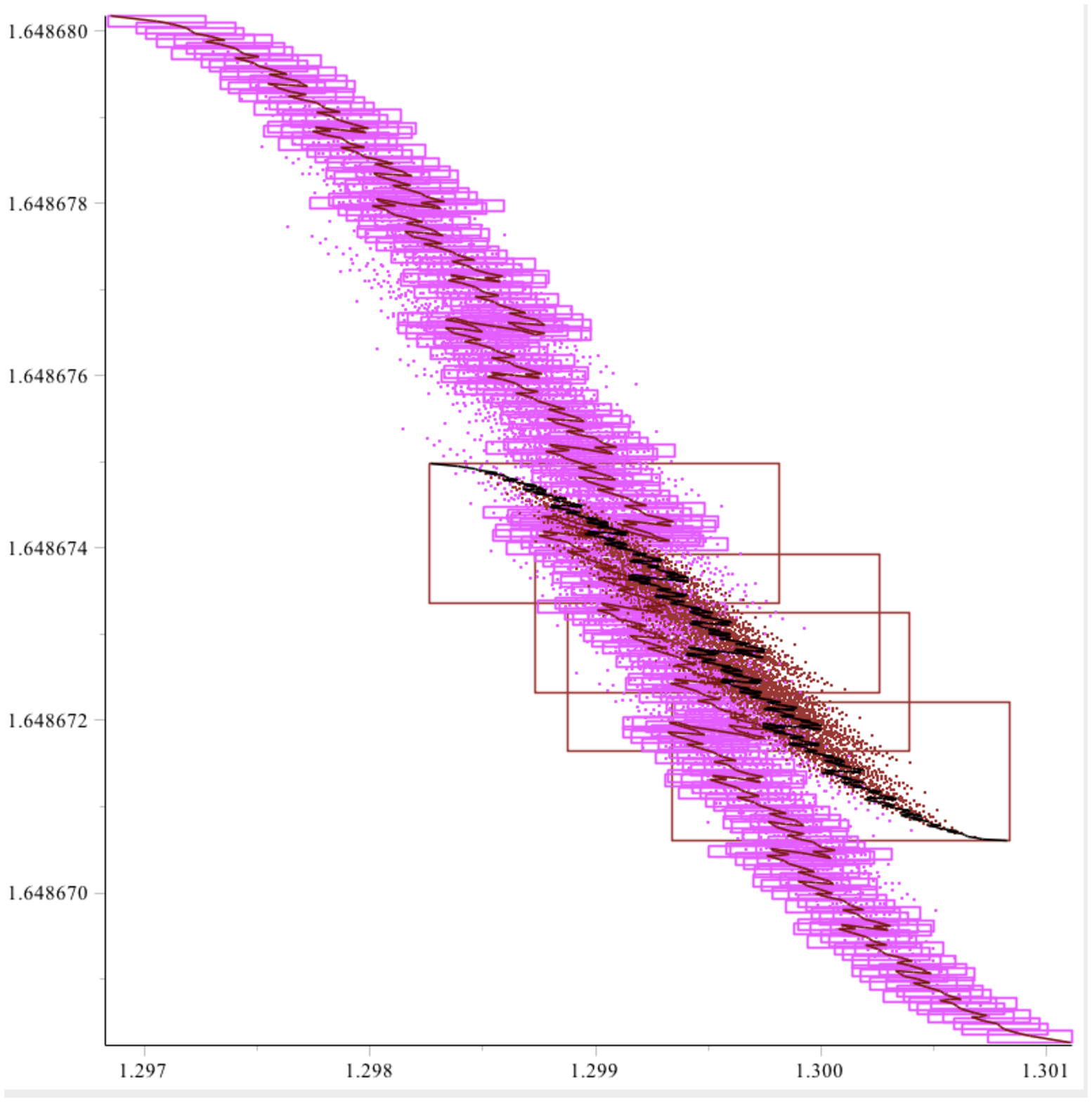}
\caption{$R_{w_5mmmp^4mppp}$ (magenta) and $R_{w_6pzm^4zzmzmm}$ (brown)}
\label{fig:w5w6}
\end{figure}

Finally, consider $R_{w_6mmmp^7}$ and $R_{w_1zppm^4z^5m}$.
See Figure~\ref{fig:w6w1} and the continuous paths connecting the extreme points of each of these sets as well as
    the level~9 bounding boxes for the path in $R_{w_5mmmp^4mppp}$ and
    the level~2 bounding boxes for the path in $R_{w_6pzm^4zzmzmm}$.
Precise coordinates for the bounding boxes for the continuous paths can be
    found at \cite{HomePage}.
\begin{figure}
\includegraphics[width=300pt,height=300pt]{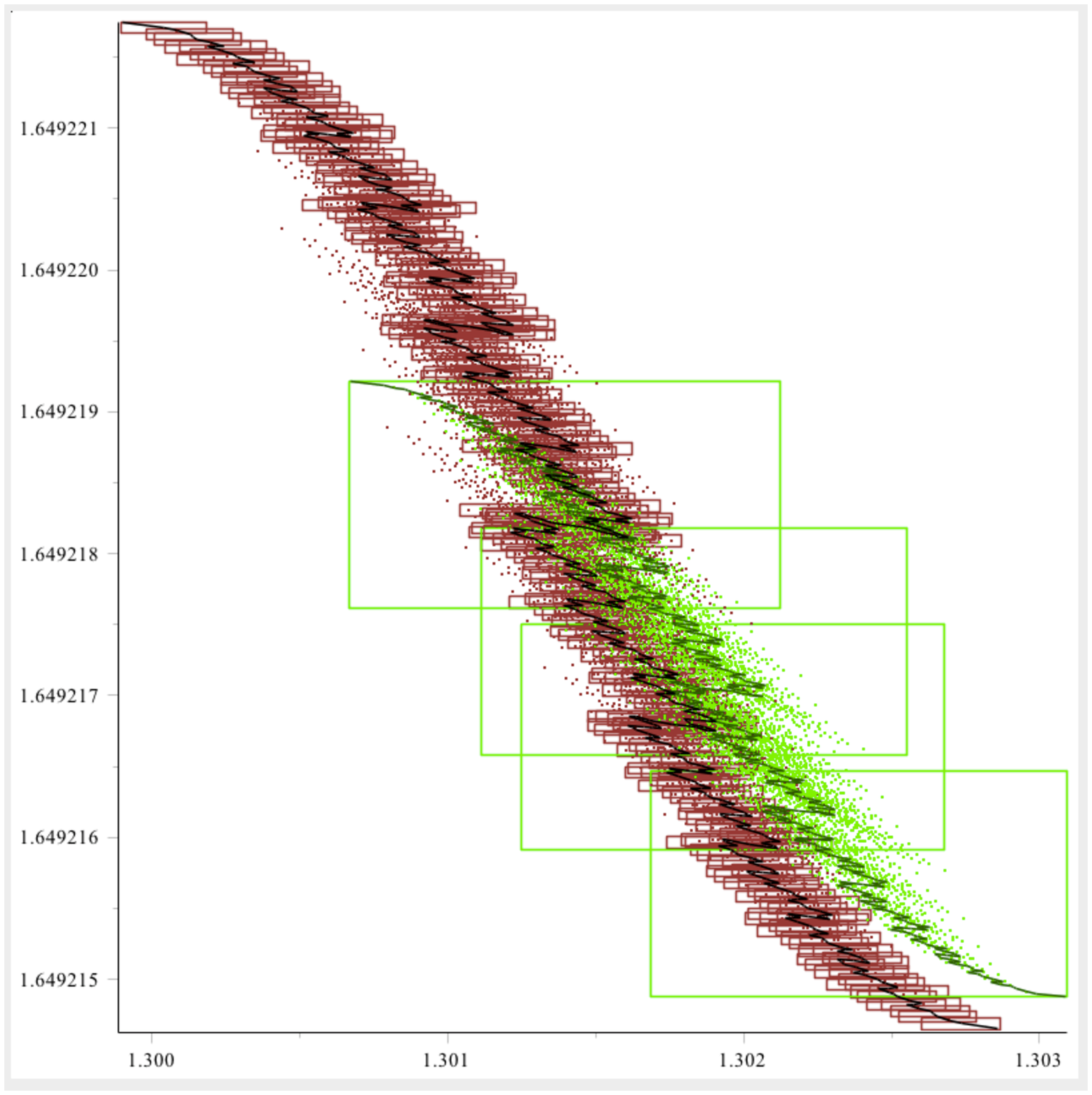}
\caption{$R_{w_6mmmp^8}$ (brown) and $R_{w_1zppm^4z^5m}$ (green)}.
\label{fig:w6w1}
\end{figure}

These surround the region in question, see Figure~\ref{fig:sharks}.
\end{proof}

\begin{rmk}
Visually it appears likely that $R_{w_2}$ intersects $R_{w_4}$ and we probably
    do not need $R_{w_3}$.
\end{rmk}

\begin{cor}
The set $\S$ is not connected.
\end{cor}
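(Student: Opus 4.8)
The plan is to turn Lemma~\ref{lem:corner} into a winding-number argument. The six sets $R_{w_1},\dots,R_{w_6}$ attached to the words of Lemma~\ref{lem:poly} will serve as the links of a closed chain that encircles a point of $\S$ while itself lying entirely in the complement of $\S$; once such a chain is in place, non-connectedness of $\S$ follows from the openness of $\S$ (Proposition~\ref{prop:S-open}) by a standard separation argument. So I would first assemble the chain, then check that it separates $\S$.

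First I would record the needed properties of the $R_{w_i}$. By Lemma~\ref{lem:poly} each $w_i$ has property RD, so (as set up just before that lemma) each $R_{w_i}$ is a closed, path-connected subset of $\BbR^2$, and by Lemma~\ref{lem:not O} together with Proposition~\ref{prop:S-open} we have $R_{w_i}\cap\S=\emptyset$. Lemma~\ref{lem:corner} exhibits, for every $i$ (indices read mod~$6$), an explicit point of $R_{w_i}\cap R_{w_{i+1}}$ — obtained by passing to suitable sub-words and comparing corner coordinates of the associated sub-boxes, in some cases via the crossing of two continuous paths. Choosing one such point $x_i$ in each overlap and joining $x_{i-1}$ to $x_i$ by a path inside the path-connected set $R_{w_i}$, I would concatenate the six arcs into a single closed curve $\Gamma\subset\bigcup_{i=1}^{6}R_{w_i}\subset\BbR^2\setminus\S$.

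Next I would argue that $\Gamma$ separates $\S$. Let $q=(1.335438104,1.646743824)$ be the point that the computer check at the start of Section~\ref{sec:island} places in $C\subset\S$. The quantitative content of Lemma~\ref{lem:corner} — the corner coordinates listed there, together with the level-$k$ bounding boxes for the connecting paths recorded at \cite{HomePage} and displayed in Figure~\ref{fig:sharks} — is precisely that the arcs can be chosen so that $\Gamma$ has nonzero winding number about $q$. Since all the $R_{w_i}$ are contained in a small rectangle around $(1.33,1.65)$ (their bounding boxes are listed in Lemma~\ref{lem:corner}), $\Gamma$ lies in that rectangle, so any pair in $\S$ lying far from it — for instance one with $\beta_1+\beta_2\geq 3.13$, which is in $\S$ by Corollary~\ref{cor:3.129} — belongs to the unbounded component of $\BbR^2\setminus\Gamma$ and hence has winding number $0$ about $\Gamma$. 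As $z\mapsto n(\Gamma,z)$ is locally constant on $\BbR^2\setminus\Gamma$, the point $q$ and this far-away point lie in distinct connected components of $\BbR^2\setminus\Gamma$. If $\S$ were connected then, being disjoint from $\Gamma$, it would be contained in a single component of $\BbR^2\setminus\Gamma$; since it meets two of them, $\S$ is not connected.

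The step I expect to be the real work is the middle one: certifying that the six regions genuinely overlap cyclically and that the resulting loop actually encircles $q$. This is the computer-assisted heart of Lemmas~\ref{lem:poly} and \ref{lem:corner} — one must verify property RD for each $w_i$, compute the roots $\beta_1^{(w)},\beta_2^{(w)}$ and their bounding boxes to sufficient precision to see consecutive sub-regions intersect, and for the two awkward pairs ($R_{w_5},R_{w_6}$ and $R_{w_6},R_{w_1}$) invoke the refined level-$k$ bounding boxes so that the connecting paths are pinned down tightly enough to guarantee that they cross. Everything after that — the assembly of $\Gamma$ from path-connected pieces (using Lemma~\ref{lem:OP} for the paths inside each $R_{w_i}$) and the winding-number/component argument — is routine.
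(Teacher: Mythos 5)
Your proposal is correct and takes essentially the same approach as the paper: the paper leaves the final topological step implicit (relying on ``the overlapping sets surround $C$'' from Lemma~\ref{lem:corner}), and you have simply made that separation argument rigorous via winding numbers, which is the standard way to formalize ``surrounds.'' The computer-assisted heart of the matter — property RD, the cyclic overlaps, and the bounding-box certification of path crossings — is correctly identified as the real work and is exactly what Lemmas~\ref{lem:poly} and~\ref{lem:corner} provide.
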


\begin{cor}
The connectedness locus $\mathcal N=\S^c$ is not simply connected.
\end{cor}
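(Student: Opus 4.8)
The plan is to promote the six overlapping regions of Lemma~\ref{lem:corner} to an explicit non-contractible loop in $\mathcal N=\S^c$ and then run a standard winding-number argument. Recall from Section~\ref{sec:island} that each region $R_{w_i}$, $i=1,\dots,6$, is path connected and satisfies $R_{w_i}\cap\S=\emptyset$, so that $R_{w_i}\subset\mathcal N$. First I would use the cyclic chain of nonempty overlaps established in the proof of Lemma~\ref{lem:corner}, namely $R_{w_1}\cap R_{w_2}\neq\emptyset$, \dots, $R_{w_5}\cap R_{w_6}\neq\emptyset$ and $R_{w_6}\cap R_{w_1}\neq\emptyset$: choose $p_i\in R_{w_i}\cap R_{w_{i+1}}$ with indices taken modulo $6$, and, using path connectedness of each $R_{w_i}$, a path $\gamma_i\subset R_{w_i}$ from $p_{i-1}$ to $p_i$. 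Their concatenation $\gamma=\gamma_1\ast\gamma_2\ast\cdots\ast\gamma_6$ is then a closed curve contained in $\bigcup_{i=1}^6 R_{w_i}\subset\mathcal N$.

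Next I would pin down a point of $\S$ that $\gamma$ encircles. The explicit point $q=(1.335438104,1.646743824)$ lies in the chevron $C\subset\S$ by the direct computation quoted at the start of Section~\ref{sec:island}; hence $q\notin\mathcal N$, and in particular $q\notin\gamma$ since $\gamma\subset\mathcal N$. The geometric content of Lemma~\ref{lem:corner}, together with the bounding-box data for the constituent continuous paths recorded in its proof and in \cite{HomePage}, is precisely that $\gamma$ surrounds $C$: none of the (level-$k$) bounding boxes along $\gamma$ meets $q$, and the six overlapping regions are threaded around $q$ as a necklace, so that the winding number of $\gamma$ about $q$ equals $\pm1$. (This is the same geometric fact that underlies the preceding corollary asserting that $\S$ is not connected.)

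To conclude, suppose for contradiction that $\mathcal N$ were simply connected. Then $\gamma$ would be null-homotopic in $\mathcal N$, and since $\mathcal N\subset\mathbb{R}^2\setminus\{q\}$, the homotopy would take place in $\mathbb{R}^2\setminus\{q\}$ as well; by homotopy invariance of the winding number this forces the winding number of $\gamma$ about $q$ to be $0$, contradicting the previous paragraph. Hence $\gamma$ is not contractible in $\mathcal N$, so $\mathcal N$ is not simply connected. The only genuinely non-routine step is the claim that $\gamma$ actually links $q$ rather than merely running close to $C$ without enclosing it; making this rigorous is exactly the purpose of the computer-assisted verification of the six pairwise overlaps and of the bounding boxes in the proof of Lemma~\ref{lem:corner}, and it is the real work behind the statement.
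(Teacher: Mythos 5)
Your proposal is correct and fills in exactly the routine topological argument that the paper leaves implicit: the six regions $R_{w_i}\subset\mathcal N$ from Lemma~\ref{lem:corner} furnish a closed curve $\gamma\subset\mathcal N$ encircling the point $q\in C\subset\S$, and the winding-number/null-homotopy contradiction is the standard way to conclude that $\mathcal N$ is not simply connected. This is the same route the paper intends, with the real work — as you correctly identify — residing in the computer-assisted verification of the overlaps and bounding boxes in the proof of Lemma~\ref{lem:corner}.
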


\begin{rmk}
A method similar to the one described in this section was used in \cite[Section 12]{Bandt02}
to show that a certain connectedness locus is not simply connected (in a different setting).
\end{rmk}

\section{Open questions}
\label{sec:conc}

There is a great deal of questions that this line of research raises, which still remain unanswered.
Here are some of them.

\begin{enumerate}
\item Is it true that if some point of the attractor has a non-empty neighbourhood, then so does (0,0)?
      In particular, what is the precise relationship between $\I$ and $\Z$?

\item We see that if $(0,0) \notin A_{\beta_1, \beta_2}$, then $(\beta_1, \beta_2) \notin \Z$.
      There are examples of $(\beta_1, \beta_2) \notin \Z$ such that that $A_{\be_1,\be_2}$ nonetheless contains $(0,0)$
      -- see Figure~\ref{fig:alg-not-in-z}.
            It would be helpful to find better criteria for a points $(\beta_1, \beta_2) \notin \Z$.

\item Find an example of $\beta_1, \beta_2$ such that
\begin {itemize}
\item $(0,0) \in A_{\beta_1, \beta_2}$;
\item $(0,0) \notin A_{\beta_1, \beta_2}^o$;
\item $(\beta_1, \beta_2) \notin \partial S$.
\end{itemize}

\item Can a point with a unique address be an interior point of $A$?

\item Does the claim in Theorem~\ref{thm:simult}~(ii) hold for all pairs $(\be_1,\be_2)$? Note that
given $\be\in(1,2)$, almost every $x\in (0,1/(\be-1))$ has a continuum of $\be$-expansions \cite{Sid03}, and
furthermore, this continuum can be chosen to have an exponential growth \cite{Kempton}. Thus, one could
hope to adapt our argument so it would hold for $(\be_1,\be_2)$ with both $\be_1$ and $\be_2$ greater than the golden
ratio.

\item We see that $\S \subset \OO$.
      Furthermore, $(\beta_1^{(n)}, \beta_2^{(n)}) \in \partial \S \cap \partial \OO$.
      When approximating $\S$ and $\OO$ computationally, via Lemma \ref{lem:SOcomp}, then
          the level $n$ approximation of $\OO$ is the closure of the level $n$ approximation of $\S$.
      Is $\OO$ the closure of $\S$?

\item Is $\Z \cap \OO = \emptyset$?

\item Justify the `spikes' in $\S$ near $(1,2)$ and $(2,1)$. That is, we know that both corners are limit points of
      $\S$ (Theorem~\ref{thm:S struct}); is it true that for any $h>0$ there exists a point
      $(\beta_1, \beta_2)$ in $(2-h,2) \times (1,1+h)$ which is not in
      $\S$?
      By looking at $(\beta_1^{(n)}, \beta_2^{(n)})$ we get a partial idea of the structure of $\S$ near $(1,2)$,
      but not a complete picture.

\item As mentioned at the beginning of Section~\ref{sec:island}, $(\be_1,\be_2)\in\mathcal S$, where
$\be_1=1.335438104, \be_2=1.646743824$. Thus, we have $\be_1+\be_2=2.982181928$, i.e.,
some small chunk of $\mathcal S$ lies below the diagonal (which is not at all obvious
from Figure~\ref{fig:S40}). It would be interesting to find
the smallest $\e>0$ such that
 $\S \subset \{(\beta_1, \beta_2) : \beta_1 + \beta_2 > 3-\e\}$ -- see Figure~\ref{fig:S-diag}.

 \begin{figure}
\includegraphics[width=300pt,height=300pt]{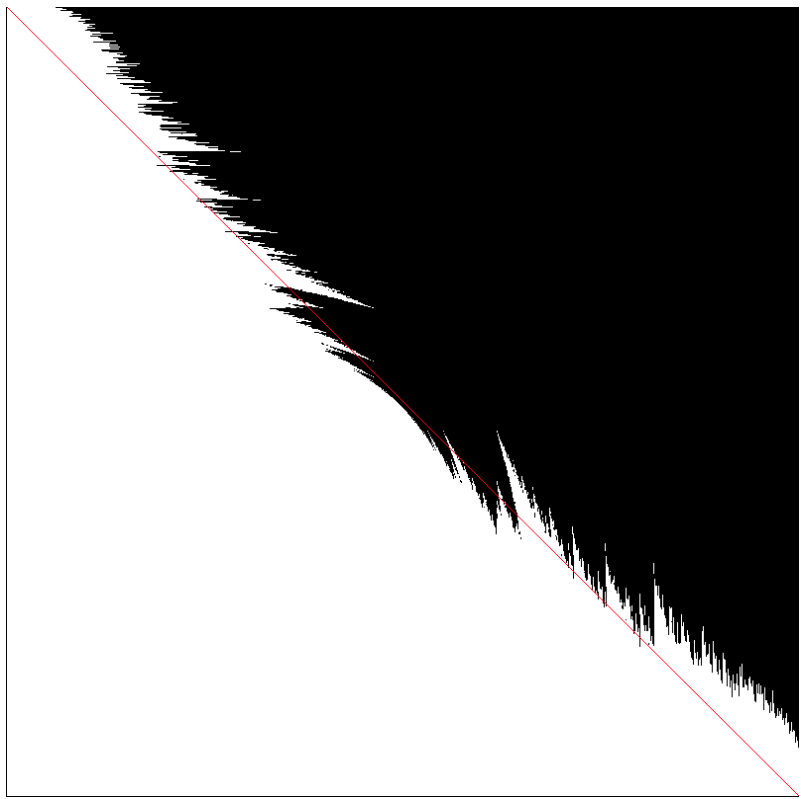}
\caption{The set $\mathcal S$ together with the diagonal $\be_1+\be_2=3$. (Level~20 approximation.)}
\label{fig:S-diag}
\end{figure}

\item We know that $\S$ contains at least three disjoint components (by symmetry around the line
    $\beta_1 = \beta_2$).
    Does it contain a finite number of components, or an infinite number of components?

\item Prove or disprove that for sufficiently small $\beta_1$ and $\beta_2$ the attractor
$A_{\beta_1, \beta_2}$ is simply connected.

\item Show that the lower box (or Hausdorff) dimension of $\partial A_{\beta_1, \beta_2}$ is strictly greater than~1
for all $\beta_1, \beta_2$.

\end{enumerate}

\section*{Acknowledgements}

The authors would like to thank Boris Solomyak and the anonymous referee
    for many helpful comments and suggestions.


\begin{thebibliography}{21}

\bibitem{Bandt02} C. Bandt, \textit{On the Mandelbrot set for pairs of linear maps}, Nonlinearity
{\bf 15} (2002), 1127--1147.

\bibitem{DJK} K. Dajani, K. Jiang and T. Kempton, \textit{Self-affine sets with positive Lebesgue measure}, Indag. Math.
{\bf 25} (2014), 774--784.


\bibitem{FS}D.-J.~Feng and N. Sidorov, \textit{Growth rate for beta-expansions}, Monatsh. Math.  {\bf 162} (2011), 41--60.

\bibitem{GS} P. Glendinning and N. Sidorov, \textit{Unique
representations of real numbers in non-integer bases}, Math. Res.
Lett. \textbf{8} (2001), 535--543.

\bibitem{Gunturk} C.~S. G\"unt\"urk, \textit{Simultaneous and hybrid beta-encodings}, in Information Sciences
and Systems, 2008. CISS 2008. 42nd Annual Conference on, pages 743-–748, 2008.

\bibitem{Hata} M.~Hata. \textit{On the structure of self-similar sets}, Japan J. Appl. Math. {\bf 2} (1985), 381-–414.

\bibitem{Kempton}T. Kempton, \textit{Counting $\beta$-expansions and the absolute continuity of Bernoulli convolutions},
Monatsh. Math. {\bf 171} (2013), 189--203.

\bibitem{KdV}V.~Komornik and M.~de~Vries, \textit{Unique expansions
of real numbers}, Adv. Math. {\bf 221} (2009), 390-427.

\bibitem{KP}V.~Komornik and A.~Peth\"{o}, \textit{Common expansions in noninteger bases}, Publ. Math. Debrecen {\bf 85} (2014), 489-—501.

\bibitem{HomePage} K. G. Hare, \textit{Home Page}, http:www.math.uwaterloo.ca/$\sim$kghare.


\bibitem{OdlyzkoPoonen93}A. M. Odlyzko and B. Poonen, \textit{Zeros of polynomials with $0,1$ coefficients},
   Enseign. Math. (2), {\bf 39} (1993), 317--348.

\bibitem{Sid03}N. Sidorov, \textit{Almost every number has a continuum of beta-expansions}, Amer. Math. Monthly {\bf 110} (2003), 838--842.

\bibitem{AD}N. Sidorov, \textit{Arithmetic dynamics}, in `Topics in Dynamics and Ergodic Theory', LMS Lecture Notes Ser. {\bf 310} (2003), 145--189.

\bibitem{S07}N. Sidorov, \textit{Combinatorics of linear iterated function systems with overlaps}, Nonlinearity {\bf 20} (2007), 1299--1312.

\bibitem{Sol}B. Solomyak, \textit{Connectedness locus for pairs of affine maps and zeros of power series},
    arXiv:1407.2563, to appear in J. Fract. Geom.

\end{thebibliography}
\end{document}